\theoremstyle{plain}
\newcommand{\refnewtheoremn}[4]{%
\newaliascnt{#1}{#2}
\newtheorem{#1}[#1]{#3}
\aliascntresetthe{#1}
\expandafter\providecommand\csname #1autorefname\endcsname{#4}}
\newcommand{\refnewtheorem}[3]{\refnewtheoremn{#1}{#2}{#3}{#3}}
\newtheorem{thm}{Theorem}[section]
\theoremstyle{definition}
\newcommand{\bA}{\mathbf{A}}
\newcommand{\cA}{\mathcal{A}}
\newcommand{\cB}{\mathcal{B}}
\newcommand{\cC}{\mathcal{C}}
\newcommand{\bD}{\mathbf{D}}
\newcommand{\bG}{\mathbf{G}}
\newcommand{\cG}{\mathcal{G}}
\newcommand{\bL}{\mathbf{L}}
\newcommand{\cO}{\mathcal{O}}
\newcommand{\cS}{\mathcal{S}}
\newcommand{\cU}{\mathcal{U}}
\newcommand{\cV}{\mathcal{V}}
\newcommand{\cX}{\mathcal{X}}
\newcommand{\cY}{\mathcal{Y}}
\newcommand{\bZ}{\mathbf{Z}}
\newcommand{\cZ}{\mathcal{Z}}
\newcommand{\fg}{\mathfrak{g}}
\newcommand{\fu}{\mathfrak{u}}
\newcommand{\id}{\mathop{{\rm id}}\nolimits}
\renewcommand{\bZ}{{\mathbb Z}}
\newcommand{\op}[1]{\!\!\mathop{\rm ~#1}\nolimits}
\newcommand{\Gm}{\mathbb{G}_m}
\newcommand{\bdot}{\centerdot}
\newcommand{\radj}[1]{\beta^{\geq #1}}
\newcommand{\ladj}[1]{\beta^{< #1}}
\newcommand{\dual}{\vee}
\newcommand{\sod}[1]{\langle #1 \rangle}
\newcommand{\QC}{\op{QC}}
\newcommand{\dbc}{D^b \op{Coh}}
\newcommand{\icoh}{\op{QC}^!}
\newcommand{\Perf}{\op{Perf}}
\newcommand{\X}{\cX}
\newcommand{\Y}{\cY}
\renewcommand{\S}{\cS}
\newcommand{\Z}{\cZ}
\newcommand{\V}{\cV}
\newcommand{\U}{\cU}
\newcommand{\Spec}{\op{Spec}}
\newcommand{\inner}[1]{\underline{#1}}
\newcommand{\Coh}{\op{Coh}}
\newcommand{\Map}{\op{Map}}
\newcommand{\heart}{\heartsuit}
\newcommand{\RHom}{\op{RHom}}
\newcommand{\colim}{\op{colim}}
\newcommand{\rgs}[1]{R\inner{\Gamma}_\S^{#1}}
\newcommand{\Mod}{\text{\rm{-Mod}}}
\newcommand{\rank}{\op{rank}}
\newcommand{\cofib}{\op{cofib}}
\newcommand{\Qshriek}{{Q!}}
\begin{document}

\title{Remarks on $\Theta$-stratifications and derived categories}
\author{Daniel Halpern-Leistner}

\begin{abstract}
This note extends some recent results on the derived category of a geometric invariant theory quotient to the setting of derived algebraic geometry. Our main result is a structure theorem for the derived category of a derived local quotient stack which admits a stratification of the kind arising in geometric invariant theory. The use of derived algebraic geometry leads to results with pleasingly few hypotheses, even when the stack is not smooth. Using the same methods, we establish a ``virtual non-abelian localization theorem'' which is a K-theoretic analog of the virtual localization theorem in cohomology.
\end{abstract}

\maketitle

When $X$ is a smooth projective-over-affine variety over a field of characteristic 0, and $G$ is a reductive group acting on $X$, goemetric invariant theory provides a $G$-equivariant stratification $X = X^{ss} \cup S_0 \cup \cdots \cup S_N$, where $X^{ss}$ is the \emph{semistable locus} and $X^{us} = S_0 \cup \cdots \cup S_N$ is the \emph{unstable locus}. The main result of \cite{halpern2012derived} provides a semiorthogonal decomposition of the derived category of equivariant coherent sheaves,
\begin{equation}\label{eqn:basic_decomp}
D^b\Coh(X/G) = \sod{D^b\Coh_{X^{us}}(X/G)^{<w},D^b\Coh(X^{ss}/G),D^b\Coh_{X^{us}}(X/G)^{\geq w}},
\end{equation}
where the categories $D^b\Coh_{X^{us}}(X/G)^{<w, \geq w}$ consist of objects supported on $X^{us}$, and in fact these categories admit infinite semiorthogonal decompositions further refining the structure of $D^b\Coh(X/G)$. This decomposition is also implicit in the main result of \cite{ballard2012variation}. One can think of \autoref{eqn:basic_decomp} as a week kind of direct sum decomposition, which categorifies classically studied direct sum decompositions of equivaraint cohomology and topological K-theory (with respect to a maximal compact subgroup $G^c \subset G$) \cite{harada2007surjectivity},
\[K_{G^c}(X) \simeq K_{G^c}(X^{ss}) \oplus K_{G^c}(S_0) \oplus \cdots \oplus K_{G^c}(S_N)\]

The final version of \cite{halpern2012derived} proves a version of \autoref{eqn:basic_decomp} for singular classical global quotient stacks $\X = X/G$, but only under two additional technical hypotheses, referred to as (L+) and (A). Unfortunately these hypotheses often fail, even for $\X$ with local complete intersection singularities. The main observation of this note is that by passing to the setting of derived algebraic geometry, there is a version which holds generally, without any technical hypotheses. In particular, it applies even to classical quotient stacks in situations where the main theorem of \cite{halpern2012derived} does not apply.

Generalizing the GIT stratification of a smooth quotient stack, we introduce the notion of a derived $\Theta$-stratification in a derived quotient stack, $\X = X/G$, in characteristic $0$ (\autoref{defn:theta_stratum}). Rather than working with $D^b\Coh(\X)$, our main structure theorem, \autoref{thm:derived_Kirwan_surjectivity}, provides a semiorthogonal decomposition of $D^-\Coh(\X)$ generalizing \autoref{eqn:basic_decomp} for a single derived $\Theta$-stratum $\S \subset \X$. We provide two more refined versions of this result: first for $D^b\Coh(\X)$ when $\X$ is quasi-smooth (and certain obstructions vanish) in \autoref{thm:derived_Kirwan_surjectivity_quasi-smooth}, which has applications to variation of GIT quotient (see \autoref{cor:wall_crossing}), and second for $\Perf(\X)$ when the inclusion $\S \subset \X$ is a regular embedding (\autoref{prop:DKS_perfect}). Finally, we show how to extend the main structure theorem to the setting of multiple strata on a local quotient stack in \autoref{thm:derived_Kirwan_surjectivity_full}.

As another application of the notion of a derived $\Theta$-stratum, we establish a $K$-theoretic virtual non-abelian localization theorem, \autoref{thm:nonabelian_localization}. Consider a quotient stack $X /T$, where $T$ is a torus, and $X$ is given a $T$-equivariant perfect obstruction theory, and let $X_i$ denote the components of the fixed locus $X^T$. Then virtual localization in cohomology, \cite{graber1999localization}, provides a method for computing integrals of equivariant cohomology classes
$$\int_{[X]^{vir}} \eta = \sum_i \int_{[X_i]^{vir}} \frac{\eta|_{X_i/T}}{e(N_i^{vir})}$$
in a localization of the power series ring $H^\ast(BT)$.

The K-theoretic localization theorem is an analgous expression for the $K$-theoretic integral $\chi(\X,F) := \sum (-1)^n R^n\Gamma(X,F)^G$ for $F \in \Perf(X/G)$. Recall that each stratum $S_i$ comes with a distinguished one-parameter-subgroup $\lambda_i$ and fixed component $Z_i \subset S_i^{\lambda_i}$. We denote the map of stacks $\sigma_i : \Z_i := Z_i/L_i \to X/G$, where $L_i$ is the centralizer of $\lambda$. The non-abelian localization formula has the form
\begin{equation} \label{eqn:basic_localization}
\chi(\X,F) = \chi(\X^{ss},F) + \sum \chi(\Z_i,E_i \otimes \sigma^\ast F),
\end{equation}
where $E_i$ are certain quasi-coherent sheaves playing the role of the reciprocal of the Euler class in $K$-theory (see \autoref{thm:nonabelian_localization}).

The $E_i$ are infinite direct sums of complexes of coherent sheaves, but only finitely many of these contribute to the Euler characteristic $\chi$, so the expression is well-defined. Furthermore, the objects $E_i$ depend on the one-parameter-subgroups $\lambda_i$ in addition to the normal bundles of $Z_i$. In a way this is a strength of the formula: In the case of a $\Gm$-action the fixed loci $\Z_i$ do not depend on the stratification, but the classes $E_i$ do. Thus one can use \autoref{eqn:basic_localization} to compare $\chi(\X,F)$ and $\chi(\X^{ss},F)$, or one could choose a stratification for which $\X^{ss} = \emptyset$, in which case \autoref{eqn:basic_localization} provides an expression for $\chi(\X,F)$ in terms of ``easier'' integrals over the fixed loci $\Z_i$.

To the author's knowlege, \autoref{eqn:basic_localization} first appears in \cite{teleman2009index} for the case of smooth local quotient stacks. The formula requires no modification in the case of a quasi-smooth stack, but the objects $E_i$ must be appropriately interpreted (the ``virtual fundamental class'' is implicit in the fact that $R\Gamma(\X,\bullet)$ depends on the derived structure of $\X$). Constantin Teleman and Chris Woodward suspected a version for quasi-smooth $\X$, and a version of the formula appears in \cite{gonzalez2013gauged} for the moduli of stable curves in a smooth projectively embedded $G$-variety, although the proof there is not entirely correct.\footnote{Theorem 5.5 of \cite{gonzalez2013gauged} is essentially our localization formula stated for the specific case of the moduli space of stable curves. The key ingredient of the proof there, Proposition 5.2, only treats the case where the inclusions of the strata $\S_i \hookrightarrow \X$ are regular embeddings, and when the natural projections $\S_i \to \Z_i$ are vector bundles. However, the treatment there is vague as to the derived structure on the strata themselves, and one has to be careful in the case of quasi-smooth stacks. Either the vector bundle condition or the regular embedding condition can fail when $\X$ is quasi-smooth, and in fact both conditions hold essentially only if the derived obstruction space at each point of $\Z_i$ has weight $0$ with respect to $\lambda_i$ (See \autoref{ex:derived_stratum} below). The complete proof requires a bit more care as to the derived structure of the strata, which is what the notion of a derived $\Theta$-stratum accomplishes.} The notion of a derived $\Theta$-stratum is necessary in order to establish the virtual non-abelian localization theorem in the generality needed for its full range of applications.

\begin{rem}
This material will eventually be subsumed by a larger project studying the structure on derived categories of quasi-geometric stacks induced by $\Theta$-stratifications \cite{halpern2015derived}. That paper will prove theorems analogous to \autoref{thm:derived_Kirwan_surjectivity}, \autoref{thm:derived_Kirwan_surjectivity_quasi-smooth}, \autoref{thm:derived_Kirwan_surjectivity_full}, and \autoref{thm:nonabelian_localization} for $\Theta$-stratifications in stacks which are not local quotient stacks, and the proofs will make a more intrinsic (and essential) use of the modular interpretation provided by \cite{halpern2014structure}.

The proofs in this paper closely follow those in the final version of \cite{halpern2012derived}. While these results are not as general or complete as the one which will eventually appear in \cite{halpern2015derived}, the methods here have the advantage of being more concrete -- they involve mostly explicit manipulations of complexes of vector bundles. Therefore we feel this note will serve as a useful counterpart to \cite{halpern2015derived}. In addition, in the smooth local quotient setting, Matthew Ballard has already found interesting applications of these ideas to the moduli of semistable sheaves on surfaces \cite{ballard2014wall}, and we hope that the derived version might prove useful for the study of local-quotient moduli stacks as well.
\end{rem}

\subsubsection{Author's note}

I would like to thank Constantin Teleman for first suggesting the virtual non-abelian localization theorem as my thesis problem. I would also like to thank Davesh Maulik and the rest of the faculty of the algebraic geometry group at Columbia for encouraging me to continue this line of research. I would like to thank Chris Woodward for his encouragement and for his comments on an early version of this note. This research was supported by Columbia University and the Institute for Advanced Study, as well as an NSF Postdoctoral Research Fellowship.

\tableofcontents

\section{Definitions}

First we establish notation, and introduce certain subcategories of the derived category of quasicoherent sheaves which will be used to establish our main structure theorem,  \autoref{thm:derived_Kirwan_surjectivity}.

\begin{notn}
All of our stacks will be derived algebraic stacks, i.e. presheaves on the $\infty$-category $dg-Alg_k^{op}$, of commutative connective differential graded algebras over a fixed field, $k$, of characteristic $0$. We require that they are sheaves in the \'{e}tale topology, are $1$-stacks, and admit a smooth representable morphism from an affine scheme. (See \cite[Section 3]{lurie2011DAG8} or \cite[Chapter 2.2]{toen2008homotopical} -- the theories agree over a field of characteristic 0)
\end{notn}

More concretely, we will only consider quotient stacks and local quotient stacks. Meaning that Zariski locally, $\X$ admits an affine morphism to a quotient stack $\X' := X/G$, where $X$ is smooth and quasiprojective, and $G$ acts linearly. It follows that locally $\X = R\inner{\Spec}_X \cA / G$ for some quasicoherent sheaf of connective CDGA's $\cA$. We also assume that $\X$ is locally finitely presented. An example would be when $\X$ is a derived closed substack of $\X'$.

Let $\lambda$ be a 1PS and let $S = S_\lambda \subset X$ be a KN-stratum for the action of $G$.\footnote{The notion of a KN-stratum was introduced by Teleman \cite{teleman2000quantization} as an abstraction of the stratification in GIT studied by Kirwan, Ness, Kempf, and Hesselink. See \cite{halpern2012derived} for the precise definition we are using.} We denote $\S' = S/G \hookrightarrow \X'$, and we replace $\cA$ with a semifree resolution of the form
$$\cO_X[U_0,U_1,\ldots; d] \xrightarrow{\simeq} \cA$$
where $U_i$ is an equivariant locally free sheaf in homological degree $i$. When working with a global quotient stack, we will regard this presentation as fixed once and for all. We will make an exception when we consider local quotient stacks in \autoref{sect:multiple_strata}, where we will spell out explicit compatibility conditions between local quotient coordinate charts. 

First consider $\V := \X \times_{\X'} \S' \simeq (R\inner{\Spec}_S \cO_S \otimes_{\cO_X} \cA)  / G$. Then letting $P$ be the parabolic subgroup defined by $\lambda$, we can identify $S / G \simeq Y / P$, and thus we can write this locally free sheaf of CDGA's as a locally free sheaf of $P$-equivariant CDGA's, $\tilde{\cA}$, on $Y$. The subsheaf $\tilde{\cA}_{\lambda \geq 1} \subset \tilde{\cA}$ is $P$-equivariant, and we let $\cA_\lambda := \tilde{\cA} / \tilde{\cA} \cdot \tilde{\cA}_{\lambda \geq 1}$. Note that this has an explicit presentation of the form
\[
\cO_Y \left[(U_0|_Y)_{<1},(U_1|_Y)_{<1},(U_2|_Y)_{<1},\ldots ; d|_Y \right] \xrightarrow{\simeq} \cA_\lambda.
\]
Where $U_i|_Y$ admits a canonical short exact sequence of the form $0 \to (U_i|_Y)_{\geq 1} \to U_i|_Y \to (U_i|_Y)_{<1} \to 0$ from the theory for classical stacks developed in \cite{halpern2012derived}.

Next we and let $L$ be the centralizer of $\lambda$ (i.e. the Levi quotient of $P$) and consider $\cO_Z \otimes_{\cO_Y} \cA_\lambda$, regarded as an $L$-equivariant quasicoherent CDGA on $Z$. The sub-module $(\cO_Z \otimes_{\cO_Y} \cA_\lambda)_{<0}$ is $L$-equivariant and a differential ideal (because the differential has degree 0), and we let $\cB_\lambda := \cO_Z \otimes_{\cO_Y} \cA_\lambda / (\cO_Z \otimes_{\cO_Y} \cA_\lambda)_{<0}$ be the quasicoherent CDGA on $Z / L$.

\begin{defn} \label{defn:theta_stratum}
We define the derived $\Theta$-stratum to be $\S := R\inner{\Spec}_{Y} \cA_\lambda / P$. It is a closed substack of $\X$. We also define $\Z = R\inner{\Spec}_Z \cB_\lambda / L$. We have a canonical projection $\pi : \S \to \Z$ with a section $\sigma : \Z \to \S$. Furthermore, this diagram sits above the corresponding diagram for the classical KN-stratum in $\X' = X/G$:
$$\xymatrix{ \Z \ar@/^/[r]^\sigma \ar[d] & \S \ar[r]^i \ar@/^/[l]^\pi \ar[d] & \X \ar[d] \\ \Z' \ar@/^/[r] & \S' \ar[r] \ar@/^/[l] & \X' }$$
where the vertical morphisms are affine.
\end{defn}

\begin{rem}
The derived structure on the $\Theta$-stratum is that of the mapping stack from $\Theta := \bA^1 / \bG_m$ to $\X$. The stacks $\Map(\Theta,\X)$ and $\Map(B\bG_m, \X)$ are algebraic by the main result of \cite{halpern2014mapping}. The underlying classical stack of $\Z$ and $\S$ clearly form the structure of a KN stratum in the underlying classical stack of $\X$, and thus by the modular interpretation of classical KN stratifications in \cite{halpern2014structure}, it follows that $\S^{cl}$ is isomorphic to an open substack of $\Map(\Theta,\X)^{cl} = \Map(\Theta,\X^{cl})^{cl}$. To verify that the derived structures agree, it suffices to check that the cotangent complex of $\S$ as computed in \autoref{lem:relative_cotangent_complex} below agrees with the cotangent complex of the mapping stack.
\end{rem}

\begin{ex} \label{ex:derived_stratum}
Consider the algebra $\cO_X = k[x_1,\ldots,x_n,y_1,\ldots,y_m]/(f(x,y))$ regarded as the coordinate ring of an affine scheme with $\Gm$-action by equipping $x_i$ with positive weights and $y_i$ with negative weights in such a way that $f$ is homogeneous of weight $a$. Then $\cO_S = \cO_X / (x_1,\ldots,x_n)$ is a classical KN-stratum. On the other hand, we can consider the semi-free resolution $\cO_X \sim k[x_i,y_j,u;d]$, where $u$ is a variable of weight $a$ and homological degree $1$ with $du = f(x,y)$. If $a>0$, then the derived $\Theta$-stratum is $\Spec k[y_j] / \Gm$ and is classical -- in this case the stratum is a bundle of affine spaces over $B\Gm$, but $i : \S \hookrightarrow \X$ is not a regular embedding. If $a\leq 0$, then $\S = \Spec k[y_j,u;d] / \Gm$, with $du = f(0,y)$. In this case $i$ is a regular embedding, but if $a<0$, $\Z \simeq B\Gm$ and so $\pi : \S \to \Z$ is not smooth. In the case $a=0$, then $\Z = \Spec k[u;du=0] / \Gm$ and $\pi : \S \to \Z$ is a bundle of affine spaces.
\end{ex}

\begin{notn}
For any stack $\X$, we let $\QC(\X)$ denote the $\infty$-category of (unbounded) quasicoherent complexes on $\X$ (i.e. the $\infty$-categorical version of the derived category of quasicoherent sheaves), defined as the limit under pullback of the categories $R\Mod$ over all maps $\Spec R \to \X$. $D^? \Coh(\X)$, where $? = b,+,-,$ or blank, will denote the full subcategory of $\QC(\X)$ whose homology sheaves are coherent and bounded,\footnote{We have used the classical notation for these categories because it should be familiar to more readers. Note, however, that $D^b\Coh(\X)$ is not the derived category of the abelian category of coherent sheaves on $\X$, because the category of coherent sheaves always agrees with that of the underlying classical stack.} homologically bounded above, homologically bounded below, and unbounded respectively. $\Perf(\X)$ denotes the category of perfect complexes. Adding the subscript $\Coh_\S$ or $\QC_\S$ will refer to the full subcategory of objects set-theoretically supported on the closed substack $\S \subset \X$.
\end{notn}

\begin{notn}
Objects of $\QC(\X)$ will typically be denoted in Roman font, i.e. $F \in \QC(\X)$, and we reserve the notation $F_\bdot$ for when it is necessary to emphasize the underlying complex of $F$. We use homological grading conventions throughout, so for the usual $t$-structure on $\QC(\X)$, $\tau_{\leq n} F$ is an object which is homologically bounded above ($H_i(F) = 0$ for $i>n$) and $\tau_{\geq n}$ is homologically bounded below. Subcategories defined by the $t$-structure will be denoted by subscripts, so $\QC(\X)_{<\infty}$ is the full subcategory of complexes which are homologically bounded above, and $\QC(\X)_\heart$ is the category of quasicoherent sheaves.
\end{notn}

\begin{defn} Regarding $F \in \QC(\Z)$ as an $L$-equivariant $\cO_Z$-module, canonically decomposes into a direct sum of objects based on the weights w.r.t. $\lambda$. We will use this to define the following subcategories of $\QC(\S)$:
\begin{align*}
\Perf(\S)^{\geq w} &= \left\{ F \in \Perf(\S) \left| \sigma^\ast F \in \QC(\Z)^{\geq w} \right.\right\} \\
\Perf(\S)^{<w} &= \left\{ F \in \Perf(\S) \left| \sigma^\ast F \in \QC(\Z)^{< w} \right.\right\} \\
\end{align*}
We let $\QC(\S)^{\geq w}$ (respectively $\QC(\S)^{<w}$) be the smallest stable subcategory of $\QC(\S)$ containing $\Perf(\S)^{\geq w}$ (respectively $\Perf(\S)^{<w}$) and closed under colimits. Furthermore, for any full subcategory $\cC \subset \QC(\S)$, such as $D^-\Coh(\S)$, we let
$$\cC^{\geq w} := \cC \cap \QC(\S)^{\geq w}, \quad \text{and} \quad \cC^{<w} := \cC \cap \QC(\S)^{<w}.$$
\end{defn}

The pushforward functor $i_\ast : \QC(\S) \to \QC(\X)$ admits a right adjoint, which we denote by $i^\Qshriek : \QC(\X) \to \QC(\S)$. Note that this functor is continuous if and only if $\cO_\S$ is perfect as an $\cO_\X$-module, in which case $i_\ast$ preserves perfect complexes. The notation $i^!$ shall be used for the corresponding right adjoint functor between categories of ind-coherent sheaves,\footnote{See \cite{gaitsgory2013ind} for a general definition of $\QC^!$ of a locally almost finitely presented prestack, where it is denoted $\op{IndCoh}(\X)$, and see \cite{drinfeld2013some} for a proof that $\QC^!(\X) = \op{Ind}(\dbc(\X))$ for a quasi-compact algebraic stack with affine stabilizer groups.} which we denote $\QC^!(\X)$. Because $i_\ast : \icoh(\S) \to \icoh(\X)$ preserves $\dbc$, the functor $i^! : \icoh(\X) \to \icoh(\S)$ is continuous.

\begin{defn} \label{defn:general_categories}
We consider the following full subcategories of $\QC(\X)$:
\begin{align*}
\QC(\X)^{\geq w} := \left\{ F \in \QC(\X) | i^\ast F \in \QC(\S)^{\geq w} \right\} \\
\QC(\X)^{< w} := \left\{ F \in \QC(\X) | i^\Qshriek F \in \QC(\S)^{< w} \right\}
\end{align*}
And we follow the convention that for any full subcategory, $\cC \subset \QC(\X)$, we let $\cC^{\geq w}$ (respectively $\cC^{<w}$) denote $\cC \cap \QC(\X)^{\geq w}$ (respectively $\QC(\X)^{<w}$). We define
$$\cG_w := D^-\Coh(\X)^{\geq w} \cap D^-\Coh(\X)^{<w}.$$
\end{defn}

Finally, we will use the following
\begin{notn}
A \emph{baric decomposition} of a stable $\infty$-category $\cC$ will denote a family of semiorthogonal decompositions $\cC = \sod{\cC^{<w},\cC^{\geq w}}$, indexed by $w \in \bZ$, such that $\cC^{<w} \subset \cC^{<w+1}$ and $\cC^{\geq w} \subset \cC^{\geq w-1}$ for all $w$. In this case we define $\cC^w = \cC^{\geq w} \cap \cC^{<w+1}$. We say that the decomposition is \emph{complete} if $\bigcap_w \cC^{<w} = 0$ and $\bigcap_w \cC^{\geq w} = 0$.
\end{notn}

\section{A structure theorem for the derived category}

Our main result of this section will be the following

\begin{thm}\label{thm:derived_Kirwan_surjectivity}
For any $w \in \bZ$, there is a semiorthogonal decomposition
\begin{equation} \label{eqn:main_SOD}
D^- \Coh(\X) = \sod{D^- \Coh_\S(\X)^{<w}, \cG_w,D^-\Coh_\S(\X)^{\geq w}}
\end{equation}
where $\cG_w$ is identified with $D^- \Coh(\X^{ss})$ via the restriction functor. Furthermore, $D^- \Coh_\S(\X)^{<w}$ and $D^-\Coh_\S(\X)^{\geq w}$ give a complete baric decomposition,
\[D^-\Coh_\S(\X) = \sod{D^- \Coh_\S(\X)^{<w}, D^-\Coh_\S(\X)^{\geq w}}, \]
and we can identify $D^-\Coh_\S(\X)^w$ with the essential image of the fully faithful functor $i_\ast \pi^\ast : D^-\Coh(\Z)^w \to D^-\Coh(\X)$.
\end{thm}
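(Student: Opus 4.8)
The plan is to adapt the argument from the final version of \cite{halpern2012derived}, working directly with the semifree presentations $\cA\to\cO_X[U_\bullet]$, $\cA_\lambda\to\cO_Y[(U_\bullet)_{<1}]$ and $\cB_\lambda$ fixed in Section~1. Write $j:\X^{ss}:=\X\setminus\S\hookrightarrow\X$, so that $(i,j)$ together with the local cohomology functor $\Gamma_\S$ (the fiber of $\id\to j_*j^*$) form the usual recollement of $\QC(\X)$. The single geometric input, extracted from the presentations and the cotangent complex computation of \autoref{lem:relative_cotangent_complex}, is a weight dichotomy: the affine projection $\pi:\S\to\Z$ has $\pi_*\cO_\S$ concentrated in $\lambda$-weights $\le 0$ with weight-$0$ summand $\cO_\Z$, while $\bL_{\S/\X}[-1]$ --- and hence $i^*i_*\cO_\S$, which is $\cO_\S$ plus a direct summand of strictly positive $\lambda$-weights along $\Z$ --- has strictly positive weights. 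This is the incarnation in our setting of the standard positivity of KN strata, and, unlike in \cite{halpern2012derived}, it holds with no regularity hypothesis on $i$.

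Step~1: the complete baric decomposition $\QC(\S)=\sod{\QC(\S)^{<w},\QC(\S)^{\ge w}}$, with $\QC(\S)^w\simeq\QC(\Z)^w$ via $\pi^*$ (inverse $\sigma^*$). Semiorthogonality reduces --- both halves being colimit-generated by perfect objects and $\op{Hom}$ out of a perfect object being continuous --- to the vanishing $\Gamma(\S,H)=\Gamma(\Z,\pi_*H)=0$ for perfect $H$ with $\sigma^*H$ in negative weights; this holds because $\Gamma(\Z,-)$ reads off only the weight-$0$ part and, by the explicit presentation, $\pi_*$ cannot produce a weight-$0$ part from such an $H$. The truncation triangles come from splitting a bounded-above, degreewise-finite semifree complex into its weight-$\ge w$ subcomplex and weight-$<w$ quotient (the differential respects the split because a map $\cO_\S\langle a\rangle\to\cO_\S\langle a'\rangle$ is nonzero only when $a'\ge a$), and the same operation shows these functors preserve $D^-\Coh(\S)$, whereas for $D^b\Coh$ or $\Perf$ they do not without extra hypotheses (cf.\ \autoref{thm:derived_Kirwan_surjectivity_quasi-smooth}, \autoref{prop:DKS_perfect}). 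Step~2: push forward along the closed immersion $i$, using that $\QC_\S(\X)$ is colimit-generated by $i_*\Perf(\S)$ and identifying --- via the adjunction triple $i^*\dashv i_*\dashv i^\Qshriek$ and the weight dichotomy --- the generated subcategories with the $i^*$- and $i^\Qshriek$-defined categories of \autoref{defn:general_categories}. This yields a complete baric decomposition of $\QC_\S(\X)$ with $\QC_\S(\X)^w$ the essential image of $i_*\pi^*:\QC(\Z)^w\to\QC(\X)$; full faithfulness of $i_*\pi^*$ is the computation, for $M,N$ of weight $w$,
\[\op{Hom}_\X(i_*\pi^*M,i_*\pi^*N)=\op{Hom}_\S\bigl((i^*i_*\cO_\S)\otimes_{\cO_\S}\pi^*M,\,\pi^*N\bigr)=\op{Hom}_\S(\pi^*M,\pi^*N)=\op{Hom}_\Z(M,N),\]
where the middle step discards the strictly-positive-weight summand of $i^*i_*\cO_\S$ (orthogonal to $\pi^*N$ by the baric semiorthogonality on $\QC(\S)$) and the last uses $(\pi_*\cO_\S)_0=\cO_\Z$ together with $M$ of weight $w$. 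Intersecting with $D^-\Coh$ (legitimate by the preservation noted above) gives the asserted baric decomposition of $D^-\Coh_\S(\X)$, with $D^-\Coh_\S(\X)^w$ the essential image of $i_*\pi^*$ on $D^-\Coh(\Z)^w$; completeness holds since a nonzero bounded-above coherent complex has a nonzero weight in some homological degree.

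Step~3: the three-term decomposition of $D^-\Coh(\X)$. The three semiorthogonality vanishings are formal consequences of the adjunctions and Step~2: reducing an object of $D^-\Coh_\S(\X)^{<w}$ to a (convergent) extension of the basic objects $i_*\pi^*M$ with $M\in D^-\Coh(\Z)^{<w}$, the vanishing $\op{Hom}_\X(\cG_w,D^-\Coh_\S(\X)^{<w})=0$ comes down to $\op{Hom}_\X(G,i_*\pi^*M)=\op{Hom}_\S(i^*G,\pi^*M)=0$ using $i^*G\in\QC(\S)^{\ge w}$; the pair $(D^-\Coh_\S(\X)^{\ge w},\cG_w)$ is handled dually with $i^\Qshriek$, and $(D^-\Coh_\S(\X)^{\ge w},D^-\Coh_\S(\X)^{<w})$ is the baric semiorthogonality. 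For generation, any $F\in D^-\Coh(\X)$ is modified into $\cG_w$ by successive cones on objects of $D^-\Coh_\S(\X)^{<w}$ and then of $D^-\Coh_\S(\X)^{\ge w}$ --- this is the ``Kirwan surjectivity'' step, and the weight estimates of Step~1 are exactly what keeps the two modifications compatible. Thus $D^-\Coh(\X)=\sod{D^-\Coh_\S(\X)^{<w},\cG_w,D^-\Coh_\S(\X)^{\ge w}}$. Since its outer pieces generate $D^-\Coh_\S(\X)$ (the baric decomposition of Step~2) and the Verdier quotient $D^-\Coh(\X)/D^-\Coh_\S(\X)$ is $D^-\Coh(\X^{ss})$ via $j^*$ (standard for the open/closed pair $(\X^{ss},\S)$), the semiorthogonal decomposition identifies $\cG_w$ with $D^-\Coh(\X^{ss})$ via $j^*$.

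The main obstacle is Step~2, and within it two points that genuinely use the derived framework. First, because $i$ need not be a regular embedding (\autoref{ex:derived_stratum}), the weight behaviour of $i^*i_*\cO_\S$ and $i^\Qshriek i_*\cO_\S$ --- in particular that the former is $\cO_\S$ plus strictly positive weights and the latter $\cO_\S$ plus strictly negative weights --- must be read off the cotangent complex of \autoref{lem:relative_cotangent_complex} via a (derived-de Rham type) filtration rather than a naive Koszul complex. Second, one must check carefully that the explicit weight-truncation functors, applied to bounded-above degreewise-finite semifree resolutions, are well defined up to equivalence and preserve homological boundedness above and coherence --- this is precisely what forces the statement to be about $D^-\Coh$, and it is also why the $D^b\Coh$ and $\Perf$ refinements of \autoref{thm:derived_Kirwan_surjectivity_quasi-smooth} and \autoref{prop:DKS_perfect} require their extra hypotheses. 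Granting these, the remaining content is bookkeeping with the recollement and the adjunction triple $(i^*,i_*,i^\Qshriek)$.
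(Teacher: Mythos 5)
Your Steps 1 and 2 track the paper's own route: the baric decomposition of $D^-\Coh(\S)$ via weight-splitting of semifree presentations $\cA\otimes E_\bullet$, the transfer to $D^-\Coh_\S(\X)$ using the positivity $\bL_{\S/\X}\in D^-\Coh(\S)^{\geq 1}$ from \autoref{lem:relative_cotangent_complex}, and the fully-faithfulness computation for $i_\ast\pi^\ast$ are all essentially the arguments of \autoref{lem:baric_decomp} and \autoref{lem:baric_decomposition_supports}. One small correction: $i^\ast i_\ast F$ is not $F$ plus a positive-weight direct summand; what is true (and what the paper uses) is that it carries a filtration with associated graded $\op{Sym}(\bL_{\S/\X})\otimes F$, and your orthogonality and fully-faithfulness computations only need this filtration, so that slip is harmless. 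Your identification of $\cG_w$ with $D^-\Coh(\X^{ss})$ through the Verdier quotient $D^-\Coh(\X)/D^-\Coh_\S(\X)$ is a reasonable variant of the paper's quantization-commutes-with-reduction lemma, granted the decomposition itself.

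The genuine gap is in Step 3, precisely at the point you compress into one sentence (``successive cones \ldots the weight estimates of Step 1 are exactly what keeps the two modifications compatible''): you never establish that $D^-\Coh_\S(\X)^{\geq w}$ is right admissible and $D^-\Coh_\S(\X)^{<w}$ is left admissible in $D^-\Coh(\X)$, i.e. that every $F\in D^-\Coh(\X)$ actually admits the required decomposition triangles with \emph{coherent, bounded-above} pieces. The natural candidate $R\inner{\Gamma}_\S F$ from your recollement lives only in $\QC_\S(\X)$ --- its homology is not coherent (already $R\inner{\Gamma}_{\{0\}}\cO_{\bA^1}$ fails) --- so the baric truncations of Step 2 cannot simply be applied to it. The paper's proof handles exactly this: it produces a Koszul system $K_0\to K_1\to\cdots$ of perfect complexes supported on $\S$ with $\colim K_n\simeq R\inner{\Gamma}_\S\cO_\X$ and $\op{Cone}(K_n\to\cO_\X)|_\Z\in\Perf(\Z)^{<w}$ for $n\gg 0$, defines $\rgs{\geq w}F:=\colim\radj{w}(K_n\otimes F)$ and $\rgs{<w}F:=\lim\ladj{w}(K_n^\dual\otimes F)$, reduces to perfect $F$ by approximation in low homological degrees (using that $\radj{w},\ladj{w}$ preserve connective objects), shows the colimit and limit stabilize so the outputs land in $D^-\Coh_\S(\X)^{\geq w}$ and $D^-\Coh_\S(\X)^{<w}$, and verifies via the octahedral axiom that the cones of $\rgs{\geq w}F\to F$ and $F\to\rgs{<w}F$ lie in the correct orthogonal complements. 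None of this appears in your proposal, and without it you have semiorthogonality but not the decomposition; moreover the completeness of the baric decomposition asserted in the theorem is proved in the paper with this same machinery (connectivity bounds on the $K_n$ plus perfect approximation), so your one-line justification of completeness also needs to be replaced by an actual argument.
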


The proof follows the proof of the main theorem of \cite{halpern2012derived} closely. The arguments are improved, however, by working with the derived $\Theta$-stratum, which can have a nontrivial derived structure even when $\X$ is classical (as in \autoref{ex:derived_stratum}). In addition, working with $D^- \Coh$ instead of $D^b\Coh$ leads to technical simplifications in the arguments. We will return to the structure $D^b \Coh$ in the next section.

\begin{lem} \label{lem:baric_decomp}
We have baric decompositions $\cC = \sod{\cC^{\geq w},\cC^{<w}}$, where $\cC = \Perf(\S), D^-\Coh(\S)$, or $\QC(\S)$ which are compatible with the inclusions. Furthermore we have alternative descriptions
\begin{align*}
D^-\Coh (\S)^{\geq w} &= \left\{ F \in D^-\Coh(\S) \left| \sigma^\ast F \in D^- \Coh(\Z)^{\geq w} \right. \right\}, \text{ and} \\
D^-\Coh (\S)^{< w} &= \left\{ F \in D^-\Coh(\S) \left| \sigma^\ast F \in D^- \Coh(\Z)^{<w} \right. \right\}.
\end{align*}
\end{lem}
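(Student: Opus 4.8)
The plan is to build everything from the baric decomposition of $\QC(\Z)$ by weights with respect to $\lambda$, which is manifest: an $L$-equivariant $\cO_Z$-module splits canonically as $\bigoplus_n F_n$ with $F_n$ of weight $n$, so $\QC(\Z) = \sod{\QC(\Z)^{\geq w}, \QC(\Z)^{<w}}$ is an (in fact orthogonal) semiorthogonal decomposition, and it restricts to $\Perf(\Z)$ and to $D^?\Coh(\Z)$ for each decoration $?$. The truncation functors $\beta^{\geq w}$ and $\beta^{<w}$ are exact and $t$-exact, and preserve coherence and boundedness above/below. What must be transported to $\S$ is this decomposition along the section $\sigma$ and projection $\pi$.

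First I would record the basic structural facts about $\pi \colon \S \to \Z$ and $\sigma\colon\Z\to\S$ from \autoref{defn:theta_stratum}: $\pi\circ\sigma \simeq \id_\Z$, $\pi$ is affine (indeed $\cA_\lambda$ is, Zariski-locally on $Y$, generated by the negative-weight piece $(\cO_Z\otimes_{\cO_Y}\cA_\lambda)_{<0}$ over $\cB_\lambda$, with that piece placed in ``positive $\lambda$-weight'' in the sense that it is killed by $\sigma^\ast$), and $\sigma$ is a closed immersion cut out by that ideal. The key point is that $\pi_\ast \cO_\S$, viewed on $\Z$, is connective and has $\lambda$-weights all $\geq 0$, with weight-$0$ part exactly $\cO_\Z$. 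This is what makes the weight filtration on $\S$ well-behaved. I would then \emph{define} $\Perf(\S)^{\geq w}$ and $\Perf(\S)^{<w}$ as in the paper via $\sigma^\ast$, and likewise define the candidate subcategories of $D^-\Coh(\S)$ by the displayed formulas; the content of the lemma is that these form a semiorthogonal decomposition compatible with the three inclusions $\Perf(\S)\subset D^-\Coh(\S)\subset\QC(\S)$, and that the $D^-\Coh$ description via $\sigma^\ast$ agrees with the one obtained by intersecting with $\QC(\S)^{\geq w}$.

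The semiorthogonality $\Map(F,G)=0$ for $F\in\Perf(\S)^{\geq w}$, $G\in\QC(\S)^{<w}$ is the heart of the matter. Reduce to $G\in\Perf(\S)^{<w}$ by the colimit-generation definition of $\QC(\S)^{<w}$ and the fact that $\Perf(\S)^{\geq w}$ consists of compact objects; then both are dualizable, so it suffices to show $\pi_\ast(F^\dual\otimes G)$ has no global sections, and in fact that $F^\dual\otimes G\in\QC(\S)^{<0}$ relative to $w$ shifted, i.e. $\sigma^\ast(F^\dual\otimes G)$ lies in weights $<0$. Since $\sigma^\ast$ is monoidal, $\sigma^\ast(F^\dual\otimes G)=(\sigma^\ast F)^\dual\otimes\sigma^\ast G$ lies in $\QC(\Z)^{<0}$ by weight additivity, and then $\Map_\S(F,G)=\Map_\Z(\cO_\Z, \pi_\ast(F^\dual\otimes G))$; using that $\pi_\ast$ multiplies by $\pi_\ast\cO_\S$ which has weights $\geq 0$, one concludes the weight-$0$ part of $\pi_\ast(F^\dual\otimes G)$ vanishes, hence no maps. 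The existence of the decomposition triangle for a general $F\in D^-\Coh(\S)$ I would get by first producing it for $F\in\Perf(\S)$ — resolve $F$ by a finite complex of $P$-equivariant vector bundles pulled back along $\pi$ (possible locally on $Y$, using the explicit presentation of $\cA_\lambda$), apply the weight truncation $\beta^{\geq w}$ on $\Z$ termwise and push forward, checking exactness — and then bootstrapping to $D^-\Coh(\S)$ by writing an object of $D^-\Coh(\S)$ as a colimit of a tower with perfect cofibers in the usual way and taking the (homotopy) limit/colimit of the truncations, using that the $t$-amplitude estimates keep things in $D^-\Coh$. Completeness, $\bigcap_w \cC^{<w}=0=\bigcap_w\cC^{\geq w}$, follows from the corresponding statement on $\Z$ together with $\sigma^\ast$ being conservative on $\S$ — this last point because $\pi$ is affine with $\pi_\ast\cO_\S$ having $\cO_\Z$ as its weight-$0$ summand, so $\sigma^\ast G=0$ forces the weight-$0$ part of $\pi_\ast G$ to vanish, and an induction up the weight filtration of $\pi_\ast\cO_\S$ kills all of $\pi_\ast G$, hence $G$.

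The main obstacle I expect is the passage from $\Perf(\S)$ to $D^-\Coh(\S)$: one needs the weight-truncation functors to be defined on all of $D^-\Coh(\S)$ (not just perfect complexes) and to preserve coherence and bounded-above-ness, and the cleanest route is to realize $\beta^{\geq w}_\S := i$-pushforward-free description as $\pi^\ast\beta^{\geq w}_\Z\sigma^\ast$ is \emph{not} correct in general — rather one must truncate the whole $\pi_\ast$ and descend. Getting this compatibility right, and checking that the resulting functor agrees with ``intersect with $\QC(\S)^{\geq w}$'' on $D^-\Coh(\S)$, is where the real care goes; the explicit semifree presentations of $\cA_\lambda$ and $\cB_\lambda$ set up before \autoref{defn:theta_stratum} are exactly the tool for making the termwise truncation argument on vector-bundle resolutions work.
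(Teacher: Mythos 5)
Your overall strategy (transport the evident weight decomposition of $\QC(\Z)$ along $\sigma$ and $\pi$, prove semiorthogonality by a weight/invariant computation of $\pi_\ast$, and build the truncation triangles from presentations adapted to $\cA_\lambda$) is reasonable and close in spirit to the paper's, but the central step as you wrote it contains a sign error that breaks the argument. You assert as ``the key point'' that $\pi_\ast\cO_\S$ has $\lambda$-weights $\geq 0$ with weight-$0$ part $\cO_\Z$. Under the conventions of the paper this is backwards: $\cA_\lambda = \tilde{\cA}/\tilde{\cA}\cdot\tilde{\cA}_{\lambda\geq 1}$ is presented by generators of weight $<1$, and $\cB_\lambda$ is the quotient of $\cO_Z\otimes_{\cO_Y}\cA_\lambda$ by its \emph{negative}-weight part, so the ideal of $\Z$ in $\S$ is the weight-$<0$ part and $\pi_\ast\cO_\S$ lies in weights $\leq 0$ (e.g.\ in \autoref{ex:derived_stratum} with $a\leq 0$, $\pi_\ast\cO_\S = k[y_j,u]$ with all generators of non-positive weight). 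Moreover the deduction you draw does not follow even from your own stated facts: if $\pi_\ast\cO_\S$ had weights $\geq 0$ and $\sigma^\ast(F^\dual\otimes G)$ has weights $<0$, the weight-$0$ part of $\pi_\ast(F^\dual\otimes G)$ need not vanish (weights can cancel), and in fact with that sign the orthogonality points the other way: on $\S=\Spec k[y]/\Gm$, $\Z=B\Gm$, a rank-one computation shows $\RHom_\S(\cO\langle a\rangle,\cO\langle b\rangle)$ vanishes exactly when $b<a$ precisely because $y$ has \emph{negative} weight. So the vanishing of the weight-zero invariants, which is the heart of your semiorthogonality proof, is unproved as written.

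Once the sign is corrected the skeleton can be repaired, but note that the phrase ``$\pi_\ast$ multiplies by $\pi_\ast\cO_\S$'' still needs substance: what one actually uses is a bounded-below filtration of $\pi_\ast(-)$ with associated graded $\op{Sym}(\bL_{\S/\Z})\otimes\sigma^\ast(-)$ (this is \autoref{lem:filter_pushforward} in the paper, proved there via exactly the semifree presentations of $\cA_\lambda$ you invoke), together with $\bL_{\S/\Z}$ having negative weights; without some such statement, knowing only the weights of $\sigma^\ast(F^\dual\otimes G)$ does not control the weights of $\pi_\ast(F^\dual\otimes G)$. For comparison, the paper avoids this route: it proves semiorthogonality by observing that every object of $\QC(\S)_\heart$ is pushed forward from $\S^{cl}$ and reducing to the known classical statement on the KN stratum, then constructs $\radj{w}$, $\ladj{w}$ termwise on presentations $\cA\otimes E_\bullet$ with $E_n$ pulled back from $\Z'$, and deduces the $\sigma^\ast$-characterizations from $\sigma^\ast\radj{w}F\simeq(\sigma^\ast F)^{\geq w}$ plus Nakayama. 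Two smaller points: your completeness claim is not part of the statement, and the argument you sketch for it via conservativity of $\sigma^\ast$ fails on all of $\QC(\S)$ (localizations along the negative-weight coordinates are killed by $\sigma^\ast$), though a weight-by-weight Nakayama argument does work on $D^-\Coh(\S)$; and the parenthetical describing the defining ideal of $\Z\subset\S$ as the piece of ``positive $\lambda$-weight'' repeats the same sign confusion.
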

\begin{proof}
The proof is almost identical to \cite[Proposition 3.9]{halpern2012derived}. First one proves the result for $D^- \Coh(\S) \simeq D^- \Coh( \cA_\lambda)$. Instead of resolutions by right-bounded complexes of locally free sheaves, one uses the fact that any $F$ has a right-bounded presentation of the form $\cA \otimes E_\bullet$ as an $\cA$-module (ignoring the differential) where $E_n$ is a locally free sheaf in homological degree $n$ which is pulled back from $\Z'$ under the composition $\S \to \Z \to \Z'$.

Objects of the form $\cA \otimes (E_\bullet)^{\geq w}$ restricted to the underlying classical KN stratum, $\S^{cl}$, lie in $D^-\Coh(\S^{cl})^{\geq w}$, and such objects are left orthogonal to objects in $\QC(\S^{cl})^{<w}_{<\infty}$ by \cite[Lemma 3.14]{halpern2012derived}. Note that any $F \in \QC(\S)_\heart$ is pushed forward from $\S^{cl}$, so $D^-\Coh(\S)^{\geq w}$ is left semi-orthogonal to $\QC(\S)_\heart^{<w}$. For any object $F$ and semi-free presentation as above, $\cA \otimes E_\bullet$, we define the functors $\radj{w} F$ and $\ladj{w}F$ as terms in the exact triangle
$$\xymatrix{ 0 \ar[r] & \radj{w} F \ar@{=}[d] \ar[r] & F \ar@{=}[d] \ar[r] & \ladj{w} F \ar[r] \ar@{=}[d] & 0 \\ 0 \ar[r] & \cA \otimes (E_\bullet)^{\geq w} \ar[r] & \cA \otimes E_\bullet \ar[r] & \cA \otimes (E_\bullet)^{<w} \ar[r] & 0 }.$$
The above observation implies that regardless of the presentations chosen, $\RHom_{\QC(\S)}(\radj{w} F, \ladj{w} G)$ vanishes for any pair of objects.

It is clear from the explicit construction that any object of the form $\radj{w} F$ (respectively $\ladj{w} F$) can be written as a colimit of objects in $\Perf(\S)^{\geq w}$ (respectively $\Perf(\S)^{<w}$). Furthermore objects of the form 
$\radj{w} F$ and $\ladj{w} F$ generate $D^- \Coh(\S)^{\geq w}$ and $D^- \Coh(\S)^{<w}$ respectively, so we have the semiorthogonal decomposition of $D^-\Coh(\S)$. It follows from the construction that $\sigma^\ast \radj{w} F \simeq (\sigma^\ast F)^{\geq w}$ and likewise for $\ladj{w}$. Using this and Nakayama's lemma, one can deduce the alternate characterizations of $D^-\Coh(\S)^{\geq w}$ and $D^-\Coh(\S)^{<w}$.

The argument for why this semiorthogonal decomposition preserves perfect objects follows verbatim from the argument in \cite{halpern2012derived}: essentially that the set of points in $\S$ for which an object in $D^- \Coh(\S)$ is perfect is open, so we know that $\sigma^\ast \radj{w} F \simeq (\sigma^\ast F)^{\geq w}$ is perfect, and the only open substack of $\S$ through which $\sigma :\Z \to \S$ factors is $\S$ itself. Finally because $\S$ is a perfect stack, once we have a semiorthogonal decomposition for $\Perf(\S)$, there is a unique semiorthogonal decomposition of $\QC(\S) = \op{Ind} \Perf(\S)$ such that $\radj{w}$ and $\ladj{w}$ are cocontinuous.
\end{proof}

\begin{rem}\label{rem:homology}
An object in $D^b \Coh(\S)_\heart$ which has weights $<w$ when regarded as an $L$-equivariant $\cO_Z$-module clearly has a resolution by sheaves of the form $\cA \otimes E$ with $E$ locally free of fiber weight $<w$, and thus lies in $D^-\Coh(\S)^{<w}$. Conversely, an object in $D^- \Coh(\S)^{<w}$ has a presentation of this form, and thus has homology sheaves which have negative weights. Thus an object of $D^-\Coh(\S)$ lies in $D^-\Coh(\S)^{<w}$ if and only if its homology lies in that subcategory. Because $\QC(\S) = \op{Ind}(\Perf(\S))$, this description extends to the baric decomposition of $\QC(\S)$ as well: $F \in \QC(\S)^{<w}$ if and only if $H_i(F) \in \QC(\S)^{<w}$ for all $i$.
\end{rem}

The following observation is the primary reason for introducing derived algebraic geometry into this story. It is the direct derived analogy to the computation of the normal bundle of a KN stratum in a smooth variety.

\begin{lem} \label{lem:relative_cotangent_complex}
There is an equivalence of canonical exact triangles in $D^- \Coh(\S)$
\[
\xymatrix{ \radj{1} i^\ast \bL_\X \ar[r] \ar[d] & i^\ast \bL_\X \ar[r] \ar@{=}[d] & \ladj{1} i^\ast \bL_\X \ar[r] \ar[d] & \\ \bL_{\S/\X}[-1] \ar[r] & i^\ast \bL_\X \ar[r] & \bL_\S \ar[r] & }
\]
In particular, $\bL_{\S / \X} \in D^- \Coh (\S)^{\geq 1}$.
\end{lem}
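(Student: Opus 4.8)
The plan is to recognize the bottom exact triangle of the diagram --- the rotation of the transitivity triangle $i^\ast\bL_\X \to \bL_\S \to \bL_{\S/\X}$ attached to $\S \xrightarrow{i} \X$ --- as exactly the baric triangle of $i^\ast\bL_\X \in D^-\Coh(\S)$ at weight $1$. The top row is the baric triangle $\radj{1} i^\ast\bL_\X \to i^\ast\bL_\X \to \ladj{1} i^\ast\bL_\X$ by definition, and the two triangles already agree on their middle term via the identity, so by the uniqueness of semiorthogonal decompositions --- using that $\RHom$ from $D^-\Coh(\S)^{\geq 1}$ to $D^-\Coh(\S)^{<1}$ vanishes, which is part of \autoref{lem:baric_decomp} --- there will be a unique isomorphism of triangles inducing the identity on $i^\ast\bL_\X$; its components are the left and right verticals of the diagram, and the containment of $\bL_{\S/\X}$ is the ``in particular'' clause. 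Thus the whole lemma reduces to the two statements
\[
\bL_\S \in D^-\Coh(\S)^{<1} \qquad\text{and}\qquad \bL_{\S/\X} \in D^-\Coh(\S)^{\geq 1}.
\]
Everything in sight is locally of finite presentation, so all the cotangent complexes lie in $D^-\Coh$ of the relevant stack, and by \autoref{rem:homology} these containments may be checked after applying $\sigma^\ast$, where the $\lambda$-weight decomposition is available; note $\sigma^\ast\cA_\lambda = \cB_\lambda$, which is concentrated in $\lambda$-weight $0$ since $Z$ is $\lambda$-fixed, so tensoring a weight-$\geq 1$ (resp. weight-$\leq 0$) sheaf with $\cB_\lambda$ over $\cO_Z$ stays weight $\geq 1$ (resp. $\leq 0$).

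\emph{Step 1: $\bL_\S \in D^-\Coh(\S)^{<1}$.} Apply the transitivity triangle $b^\ast\bL_{\S'} \to \bL_\S \to \bL_{\S/\S'}$ to $\S \xrightarrow{b} \S' = Y/P$. The semifree presentation $\cA_\lambda \simeq \cO_Y[(U_i|_Y)_{<1};d|_Y]$ identifies $\bL_{\S/\S'}$ with $\bL_{\cA_\lambda/\cO_Y}$, which as a graded $\cA_\lambda$-module is free on $\bigoplus_i (U_i|_Y)_{<1}[i]$ with a $\lambda$-weight-$0$ twisted differential; its $\sigma^\ast$ is $\cB_\lambda\otimes_{\cO_Z}\bigoplus_i (U_i|_Z)_{<1}[i]$, of weights $\leq 0$, so $\bL_{\S/\S'} \in D^-\Coh(\S)^{<1}$. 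On the other hand $\S'$ is a classical KN-stratum in the smooth quotient stack $\X'$, and the perfect complex $\bL_{\S'}$ has $\lambda$-weights $\leq 0$ (part of the classical theory of \cite{halpern2012derived}, or a direct check: $\bL_{\S'}$ is built from $\Omega_Y$, whose restriction to $Z$ is $\Omega_Z\oplus N^\vee_{Z/Y}$ of weights $\leq 0$, and from $\fp^\vee$ of weights $\leq 0$, since $\fp=\bigoplus_{w\geq 0}\fg_w$), so $b^\ast\bL_{\S'}\in D^-\Coh(\S)^{<1}$. An extension of two objects of $D^-\Coh(\S)^{<1}$ lies there, so $\bL_\S$ does.

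\emph{Step 2: $\bL_{\S/\X} \in D^-\Coh(\S)^{\geq 1}$.} Factor $i$ as $\S \xrightarrow{j} \V \xrightarrow{g} \X$, where $\V = \X\times_{\X'}\S'$, and use the transitivity triangle $j^\ast\bL_{\V/\X} \to \bL_{\S/\X} \to \bL_{\S/\V}$. Since $g$ is the base change along $\X\to\X'$ of the locally closed immersion $\S'\hookrightarrow\X'$ of smooth stacks, $\bL_{\V/\X}$ is the pullback of $\bL_{\S'/\X'}=N^\vee_{\S'/\X'}[1]$, and the classical computation of the conormal bundle of a KN-stratum shows $N^\vee_{\S'/\X'}$ has strictly positive $\lambda$-weights, so $j^\ast\bL_{\V/\X}\in D^-\Coh(\S)^{\geq 1}$. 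For $\bL_{\S/\V}=\bL_{\cA_\lambda/\tilde\cA}$, use the transitivity triangle for $\cO_Y\to\tilde\cA\to\cA_\lambda$: the presentations identify $\bL_{\tilde\cA/\cO_Y}$ with $\tilde\cA\otimes_{\cO_Y}\bigoplus_i(U_i|_Y)[i]$ and $\bL_{\cA_\lambda/\cO_Y}$ with $\cA_\lambda\otimes_{\cO_Y}\bigoplus_i(U_i|_Y)_{<1}[i]$, and the comparison map $\cA_\lambda\otimes_{\tilde\cA}\bL_{\tilde\cA/\cO_Y}\to\bL_{\cA_\lambda/\cO_Y}$ is $\id_{\cA_\lambda}$ tensored with the projection $\bigoplus_i(U_i|_Y)[i]\to\bigoplus_i(U_i|_Y)_{<1}[i]$ (a generator $u$ of $\tilde\cA$ killed in $\cA_\lambda$ has image $\bar u=0$, so the corresponding generator of $\bL_{\tilde\cA/\cO_Y}$ maps to $0$). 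Its cofiber is the free $\cA_\lambda$-module on $\bigoplus_i(U_i|_Y)_{\geq 1}[i+1]$, whose $\sigma^\ast$ is $\cB_\lambda\otimes_{\cO_Z}\bigoplus_i(U_i|_Z)_{\geq 1}[i+1]$ of weights $\geq 1$; hence $\bL_{\S/\V}\in D^-\Coh(\S)^{\geq 1}$. So $\bL_{\S/\X}$, an extension of two objects of $D^-\Coh(\S)^{\geq 1}$, lies in $D^-\Coh(\S)^{\geq 1}$.

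The main thing requiring care is the bookkeeping behind the identifications of the relative cotangent complexes from the chosen semifree presentations --- in particular the chain-level verification that the comparison map in Step 2 is literally $\id_{\cA_\lambda}$ tensored with the weight projection, and that the differential on $\tilde\cA$ descends correctly to $\cA_\lambda$ and to all of these complexes (this is where the proof ``involves explicit manipulations of complexes of vector bundles''). One must also mind equivariance: the short exact sequence $0\to(U_i|_Y)_{\geq 1}\to U_i|_Y\to(U_i|_Y)_{<1}\to 0$ is only $P$-equivariant, which is harmless precisely because $D^-\Coh(\S)^{\geq 1}$ and $D^-\Coh(\S)^{<1}$ are defined via $\sigma^\ast$ to $Z/L$. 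The one genuinely external input is the classical fact that a KN-stratum in a smooth quotient stack has conormal bundle of positive $\lambda$-weight (and cotangent complex of non-positive weight) --- which is exactly the classical statement that this lemma is the derived analogue of.
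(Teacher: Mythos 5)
Your proposal is correct and takes essentially the same route as the paper: the paper likewise reduces the lemma to showing $\sigma^\ast \bL_{\S/\X} \in D^-\Coh(\Z)^{\geq 1}$ and $\sigma^\ast \bL_\S \in D^-\Coh(\Z)^{<1}$, and verifies these exactly via the factorization $\S \to \V \to \X$ (resp.\ $\S \to \S'$), the known computation for a KN stratum in a smooth classical stack, and the explicit semifree presentation of $\cA_\lambda$ by generators of non-positive weight; your Step 2 chain-level computation of $\bL_{\S/\V}$ just fills in detail the paper leaves implicit. One small bookkeeping point: the reduction to checking weights after applying $\sigma^\ast$ is justified by the alternate characterizations in \autoref{lem:baric_decomp} rather than by \autoref{rem:homology}.
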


\begin{proof}
It suffices to show that $\sigma^\ast \bL_{\S/\X} \in D^- \Coh(\Z)^{\geq 1}$ and $\sigma^\ast \bL_\S \in D^- \Coh(\Z)^{<1}$. The closed immersion, $i$, was defined as the composition of two closed immersions: first from $\cV := \X \times_{\X'} \S' \to \X = R\inner{\Spec} \tilde{\cA} / P$, and then from $\S \simeq R\inner{\Spec}_S \cA_\lambda / P \to \cV$. Consider the diagram
$$\xymatrix{\S \ar[r] \ar[dr] & \cV \ar[r] \ar[d] & \X \ar[d] \\ & \S' = S/G \ar[r] & \X'=X/G}.$$

To show that $\sigma^\ast \bL_{\S/\X} \in D^- \Coh(\Z)^{\geq 1}$: From the exact triangle for $\bL_{\S/\X}$, it suffices to show that $\bL_{\S/\V}$ and $\bL_{\V/\X}|_\S$ lie in $D^-\Coh(\S)^{\geq 1}$. The former comes from the explicit description of $\cA_\lambda$ as the quotient of $\tilde{\cA}$ by the ideal generated by elements of positive weights, and the latter comes from the fact that the lemma is known for a KN stratum in a smooth classical stack, and $\bL_{\cV/\X} \simeq \bL_{\S' / \X'}|_\V$.

To show that $\sigma^\ast \bL_\S \in D^- \Coh(\Z)^{<1}$: Again from a canonical exact triangle it suffices to show that $\sigma^\ast \bL_{\S / \S'}$ and $\sigma^\ast \bL_{\S'}|_\S$ lie is $D^-\Coh(\Z)^{<1}$. The latter follows from the fact that the lemma is known for a KN stratum in a smooth stack, and the former follows from the explicit description of $\S = R\inner{\Spec}_S \cA_\lambda / P$, where $\cA_\lambda$ is an algebra admitting a presentation by locally free sheaves with non-positive weights.
\end{proof}

\begin{rem}
This computation of the cotangent complex follows more directly from the modular interpretation of $\S$ as a connected component of the mapping stack $\Map(\Theta,\X)$.
\end{rem}

\begin{lem} \label{lem:baric_decomposition_supports}
We have a baric decomposition
$$D^-\Coh_\S(\X) = \sod{D^-\Coh_\S(\X)^{<w}, D^-\Coh_\S(\X)^{\geq w}}.$$
\end{lem}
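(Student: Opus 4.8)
The plan is to transport the baric decomposition of $D^-\Coh(\S)$ from \autoref{lem:baric_decomp} across the pushforward $i_\ast : D^-\Coh(\S) \to D^-\Coh_\S(\X)$, and to identify the putative baric pieces $D^-\Coh_\S(\X)^{\geq w}$ and $D^-\Coh_\S(\X)^{<w}$ (defined as $D^-\Coh_\S(\X) \cap \QC(\X)^{\geq w}$ and $\cap\, \QC(\X)^{<w}$, using $i^\ast$ and $i^\Qshriek$ respectively) in terms of that transported structure. The key geometric input is \autoref{lem:relative_cotangent_complex}: since $\bL_{\S/\X} \in D^-\Coh(\S)^{\geq 1}$, the adjunction unit and counit relating $i^\ast$, $i_\ast$, $i^\Qshriek$ interact well with the baric truncations $\radj{w}$, $\ladj{w}$ on $\S$. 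Concretely, I expect to show: (i) $i_\ast D^-\Coh(\S)^{\geq w} \subseteq D^-\Coh_\S(\X)^{\geq w}$ and $i_\ast D^-\Coh(\S)^{<w} \subseteq D^-\Coh_\S(\X)^{<w}$; and (ii) conversely every object of $D^-\Coh_\S(\X)$ is built (via the support filtration by powers of the ideal of $\S$) from objects pushed forward from $\S$, with the baric condition on $\X$ forcing the baric condition on each associated graded piece over $\S$.

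The main steps, in order. \emph{Step 1: Semiorthogonality.} For $F \in D^-\Coh(\S)^{\geq w}$ and $G \in D^-\Coh(\S)^{<w}$ one has $\RHom_\X(i_\ast F, i_\ast G) \simeq \RHom_\S(i^\ast i_\ast F, G)$. Using the standard cofiber sequence expressing $i^\ast i_\ast F$ as an extension of $F \otimes (\wedge^\bullet \bL_{\S/\X}[1])$-type terms — more precisely, the fact that $i^\ast i_\ast F$ admits a filtration with graded pieces $F \otimes \Lambda$ where $\Lambda$ is built from $\bL_{\S/\X}[1]$, hence $\Lambda \in D^-\Coh(\S)^{\geq 0}$ and the tensor factors into higher weight — one gets $i^\ast i_\ast F \in D^-\Coh(\S)^{\geq w}$, so the $\RHom$ into $G \in D^-\Coh(\S)^{<w}$ vanishes by \autoref{lem:baric_decomp}. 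This shows $i_\ast D^-\Coh(\S)^{\geq w}$ is left-orthogonal to $i_\ast D^-\Coh(\S)^{<w}$ inside $D^-\Coh_\S(\X)$. \emph{Step 2: Generation.} Any $F \in D^-\Coh_\S(\X)$ is set-theoretically supported on $\S$; filtering by powers of (a local defining ideal of) $\S^{cl}$ and passing to the colimit, $F$ lies in the stable subcategory generated under colimits and shifts by $i_\ast D^-\Coh(\S)$ (one can argue locally and then globalize, exactly as in \cite{halpern2012derived}). Combined with Step 1, the pair $(i_\ast D^-\Coh(\S)^{<w}, i_\ast D^-\Coh(\S)^{\geq w})$ generates $D^-\Coh_\S(\X)$ and is semiorthogonal, hence gives a semiorthogonal decomposition; the truncation functors are obtained by applying $i_\ast \radj{w}$, $i_\ast \ladj{w}$ to generators and extending. \emph{Step 3: Identification with the $\QC(\X)^{\geq w}$, $\QC(\X)^{<w}$ definitions.} Show $i_\ast D^-\Coh(\S)^{\geq w} = D^-\Coh_\S(\X)^{\geq w}$: the inclusion $\subseteq$ is Step 1's computation $i^\ast i_\ast F \in \QC(\S)^{\geq w}$; for $\supseteq$, if $F \in D^-\Coh_\S(\X)$ with $i^\ast F \in \QC(\S)^{\geq w}$, decompose $F$ via Step 2 and check the baric components vanish using the weight bound on $i^\ast F$ and Nakayama (as in \autoref{lem:baric_decomp}'s alternate characterization). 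Symmetrically for $D^-\Coh_\S(\X)^{<w}$ using $i^\Qshriek$ and the dual statement $i^\Qshriek i_\ast G \in \QC(\S)^{<w}$ for $G \in D^-\Coh(\S)^{<w}$, which again follows from $\bL_{\S/\X} \in D^-\Coh(\S)^{\geq 1}$ (the right adjoint $i^\Qshriek i_\ast$ is filtered by tensoring with $\Lambda^\bullet \bL_{\S/\X}^\dual[-1]$-type terms of non-positive weight). \emph{Step 4: Nesting.} The inclusions $D^-\Coh_\S(\X)^{<w} \subseteq D^-\Coh_\S(\X)^{<w+1}$ and $D^-\Coh_\S(\X)^{\geq w} \subseteq D^-\Coh_\S(\X)^{\geq w-1}$ are immediate from the corresponding nesting on $\S$ in \autoref{lem:baric_decomp}, giving the baric decomposition.

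The step I expect to be the main obstacle is \emph{Step 1 / Step 3}, i.e. controlling $i^\ast i_\ast$ and $i^\Qshriek i_\ast$ well enough to see they preserve the relevant baric half-subcategories. The subtlety is that $\S \hookrightarrow \X$ need not be a regular (nor even quasi-smooth) embedding — as \autoref{ex:derived_stratum} shows, $\bL_{\S/\X}$ can be an unbounded complex — so $i^\ast i_\ast F$ is not simply a finite Koszul-type extension and one must argue with a (possibly infinite, but convergent) filtration whose graded pieces are $F \otimes (\text{derived divided/exterior powers of }\bL_{\S/\X}[1])$; the saving grace is that \autoref{lem:relative_cotangent_complex} pins down $\bL_{\S/\X} \in D^-\Coh(\S)^{\geq 1}$, so every correction term only increases weight, and by \autoref{rem:homology} the baric condition can be checked on homology, making the convergence harmless. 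This is precisely the place where passing to the derived $\Theta$-stratum (rather than the classical KN stratum, where $i$ fails to be a regular embedding in an uncontrolled way) is what makes the argument go through, and it parallels \cite[§3]{halpern2012derived} with the derived cotangent complex computation substituting for the normal bundle computation.
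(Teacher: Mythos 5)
Your plan is essentially the paper's own proof: semiorthogonality via the adjunction $\RHom_\X(i_\ast F,i_\ast G)\simeq \RHom_\S(i^\ast i_\ast F,G)$ together with the filtration of $i^\ast i_\ast F$ whose associated graded is $\op{Sym}(\bL_{\S/\X})\otimes F$ and the weight bound $\bL_{\S/\X}\in D^-\Coh(\S)^{\geq 1}$ from \autoref{lem:relative_cotangent_complex}, and then propagation of the decomposition from pushforwards to all of $D^-\Coh_\S(\X)$, handling bounded objects by generation and unbounded-below objects by a convergence argument (the paper writes $F=\colim F^\alpha$ with $F^\alpha\in D^b\Coh_\S(\X)$ and stabilizing truncations, using that $\radj{w}$, $\ladj{w}$ preserve connective objects). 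The only caution is that your Step 3 equality ``$i_\ast D^-\Coh(\S)^{\geq w}=D^-\Coh_\S(\X)^{\geq w}$'' should be read as a statement about the subcategory built from such pushforwards rather than the literal essential image of $i_\ast$ (which is too small); the paper avoids this by working directly with the subcategory of objects admitting a factorization with components in the $i^\ast$- and $i^\Qshriek$-defined pieces.
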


\begin{proof}
First we note that $i_\ast D^-\Coh(\S)^{\geq w}$ is left semi-orthogonal to $i_\ast D^-\Coh(\S)^{<w}$ for any $w$. Indeed, $\RHom_\X(i_\ast F,i_\ast G) \simeq \RHom(i^\ast i_\ast F,G)$, so it suffices to show that for $F \in D^- \Coh(\S)^{\geq w}$, $i^\ast i_\ast F \in D^-\Coh(\S)^{\geq w}$ as well. This follows from the fact that $i^\ast i_\ast F$ has a filtration whose associated-graded is isomorphic to $\op{Sym}(\bL_{\S/\X}) \otimes F$, which can be proved following the proof of \cite[Lemma 3.21]{halpern2012derived} verbatim.

Let $\cC$ denote the subcategory of $F \in D^- \Coh_\S (\X)$ which admit a factorization $F' \to F \to F''$ with $F' \in D^- \Coh_\S (\X)^{\geq w}$ and $F'' \in D^- \Coh_\S (\X)^{<w}$. Then $\cC$ is triangulated and contains the essential image of $D^- \Coh(\S)$ under $i_\ast$, and therefore contains $D^b \Coh_\S (\X)$. The factorizations define a baric decomposition of $\cC$, and are functorial. Furthermore, the truncation functors preserve connective objects. Writing any $F$ as a filtered colimit $F = \colim F^\alpha$ such $F^\alpha \in D^b\Coh_\S(\X)$ and $\tau_{\leq n} F^\alpha$ stabilizes for any fixed $n$, we see that $\colim \radj{w} F^\alpha$ converges to an element of $D^- \Coh_\S(\X)^{\geq w}$ and likewise for $\colim \ladj{w} F^\alpha$.
\end{proof}

We denote the baric truncation functors on $D^-\Coh_\S(\X)$ by $\radj{w}$ and $\ladj{w}$ as well.
\begin{rem} \label{rem:truncating_category}
Let $F \in \QC_\S(\X)$. The argument of \cite[Lemma 3.30]{halpern2012derived} applies verbatim to show that if $H_n(F) \in \QC_\S(\X)^{<w}$ for all $n$, then $F \in \QC_\S(\X)^{<w}$, and the converse holds if $F \in \QC(\X)^{<w}_{<\infty}$.
\end{rem}

\begin{lem}[Quantization commutes with reduction]
If $F \in D^-\Coh(\X)^{\geq w}$ and $G \in \QC(\X)_{<\infty}$ with $i^\Qshriek G \in \QC(\S)^{<w}$, then the restriction map
$$\RHom_\X(F,G) \to \RHom_{\X^{ss}}(F|_{\X^{ss}},G|_{\X^{ss}})$$
is an equivalence.
\end{lem}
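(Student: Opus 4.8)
The statement ``quantization commutes with reduction'' asserts that for $F \in D^-\Coh(\X)^{\geq w}$ and $G \in \QC(\X)_{<\infty}$ with $i^\Qshriek G \in \QC(\S)^{<w}$, the restriction $\RHom_\X(F,G) \to \RHom_{\X^{ss}}(F|_{\X^{ss}},G|_{\X^{ss}})$ is an equivalence. The strategy, following the classical argument in \cite{halpern2012derived}, is to identify the fiber of the restriction map with a mapping space computed after applying $i^\Qshriek$, and then show it vanishes. Concretely, let $j : \X^{ss} \hookrightarrow \X$ be the open immersion with complement $\S$. There is a fiber sequence of functors $\rgs{}(-) \to \id \to j_\ast j^\ast$ on $\QC(\X)$, where $\rgs{}$ denotes the functor of sections (derived) supported on $\S$; equivalently, the fiber of $G \to j_\ast j^\ast G$ is $i_\ast i^\Qshriek$-local, i.e. built from objects pushed forward from $\S$. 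Thus it suffices to show $\RHom_\X(F, i_\ast H) = 0$ for every $H$ in the stable subcategory generated under colimits by $i^\Qshriek G$, which by hypothesis lies in $\QC(\S)^{<w}_{<\infty}$ (using that $G$, hence $i^\Qshriek G$, is homologically bounded above, together with \autoref{rem:truncating_category}).

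**Key steps.** First I would reduce to showing $\RHom_\X(F, i_\ast H) = 0$ for $F \in D^-\Coh(\X)^{\geq w}$ and $H \in \QC(\S)^{<w}_{<\infty}$. By adjunction this is $\RHom_\S(i^\ast F, H)$, so the claim becomes: $i^\ast F \in \QC(\S)^{\geq w}$ is left-orthogonal to $\QC(\S)^{<w}_{<\infty}$. By definition of $\QC(\X)^{\geq w}$ we have $i^\ast F \in \QC(\S)^{\geq w}$, so this orthogonality is exactly the semiorthogonality built into the baric decomposition of \autoref{lem:baric_decomp} — once one knows it extends to the bounded-above part, which follows from \autoref{rem:homology} (an object of $\QC(\S)^{<w}$ has homology sheaves with negative $\lambda$-weights relative to the generators, and $i^\ast F$, having weights $\geq w$ in the appropriate sense, pairs trivially with these). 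The one genuinely derived input is that the fiber of $G \to j_\ast j^\ast G$ is supported on $\S$ in the strong sense needed — i.e. lies in the subcategory generated by $i_\ast \QC(\S)$ under colimits and shifts, and moreover has $i^\Qshriek$ landing in $\QC(\S)^{<w}$ because $i^\Qshriek(j_\ast j^\ast G) = 0$ gives $i^\Qshriek(\text{fib}) \simeq i^\Qshriek G \in \QC(\S)^{<w}$. Here one should be careful that $i^\Qshriek$ need not be continuous (the footnote in the excerpt flags this: continuity holds iff $\cO_\S$ is perfect over $\cO_\X$), so I would argue instead directly: the fiber $\rgs{} G$ is a limit/colimit of $i_\ast i^\Qshriek$-type terms, and it suffices to check $\RHom_\X(F, \rgs{} G) \simeq 0$, which by the adjunction $\RHom_\X(F, \rgs{}G) \simeq \RHom_\X(F, \mathrm{fib}(G \to j_\ast j^\ast G))$ and the fact that $\mathrm{Hom}$ out of $F$ into the ``local'' part only sees $i^\ast F$ against $i^\Qshriek G$. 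The cleanest route: $\RHom_\X(F, \rgs{}G)$ is the fiber of $\RHom_\X(F,G) \to \RHom_\X(F, j_\ast j^\ast G) \simeq \RHom_{\X^{ss}}(F|_{\X^{ss}}, G|_{\X^{ss}})$, so it is exactly what we want to vanish, and by writing $\rgs{}G$ via the standard \v{C}ech/stratified resolution as an iterated extension and colimit of objects $i_\ast M_\alpha$ with $M_\alpha \in \QC(\S)^{<w}$, reduce to the orthogonality above.

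**The main obstacle.** I expect the crux to be the support/boundedness bookkeeping around $i^\Qshriek$: establishing that $\rgs{}G$ (the fiber of $G \to j_\ast j^\ast G$) is built, as an object of $\QC(\X)$, from pushforwards $i_\ast M$ with $M \in \QC(\S)^{<w}_{<\infty}$ — in a way compatible with commuting $\RHom_\X(F,-)$ past the relevant colimits. The potential non-continuity of $i^\Qshriek$ means one cannot naively say $i^\Qshriek \rgs{} G = i^\Qshriek G$ and be done; instead one works with the explicit local model, where $\X \to \X'$ is affine over a quotient $X/G$ with $S \subset X$ a KN-stratum, and uses the Koszul-type resolution of $\rgs{}$ coming from the classical theory of \cite{halpern2012derived}, noting that each stage is pushed forward from $\S$ and has $\lambda$-weights $<w$ because $i^\Qshriek$ shifts weights (the conormal directions have weight $\geq 1$ by \autoref{lem:relative_cotangent_complex}, so $i^\Qshriek$ preserves the $<w$ condition). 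Granting that, the vanishing is immediate from \autoref{lem:baric_decomp} and \autoref{rem:homology}, and the equivalence follows. A secondary technical point is checking that $F|_{\X^{ss}}$ and $G|_{\X^{ss}}$ have the right finiteness so that $\RHom_{\X^{ss}}$ makes sense as stated — but since $j^\ast$ is exact and preserves $D^-\Coh$ and $\QC_{<\infty}$, this is routine.
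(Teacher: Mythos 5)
Your reconstruction is correct and follows the same route the paper intends: the paper's proof is simply the citation that \cite[Theorem 3.29]{halpern2012derived} applies verbatim, and that argument is exactly your fiber sequence $\rgs{}G \to G \to j_\ast j^\ast G$ together with the vanishing of $\RHom_\X(F,\rgs{}G)$, obtained from the Koszul-system description of $\rgs{}$, the weight-$\geq 1$ property of $\bL_{\S/\X}$ from \autoref{lem:relative_cotangent_complex}, and the semiorthogonality of \autoref{lem:baric_decomp}. The technical points you flag (non-continuity of $i^\Qshriek$, boundedness bookkeeping, commuting $\RHom(F,-)$ past the relevant colimits) are precisely the ones handled in the cited proof, so no new idea is missing.
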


\begin{proof}
The proof of \cite[Theorem 3.29]{halpern2012derived} applies verbatim.
\end{proof}

\begin{proof}[Proof of \autoref{thm:derived_Kirwan_surjectivity}]

To extend \autoref{lem:baric_decomposition_supports} to a semiorthogonal decomposition of $D^- \Coh(\X)$, we must show that the inclusion $D^-\Coh_\S(\X)^{\geq w} \subset D^-\Coh_\S(\X)$ admits a right adjoint and $D^-\Coh_\S(\X)^{< w} \subset D^-\Coh_\S(\X)$ admits a left adjoint. Note that $\X - \S \subset \X$ is the preimage of the complement of a KN stratum in the classical quotient stack $\inner{\Spec}_X (\op{Sym} U_0) / G$, and this combined with \cite[Lemma 3.22]{halpern2012derived} implies the existence of a Koszul system $K_0 \to K_1 \to \cdots$ of perfect complexes supported on $\S$ with $\colim K_n \simeq R\inner{\Gamma}_\S \cO_{\X}$. They are perfect of uniformly bounded Tor-amplitude, and for any fixed $w$, $\op{Cone}(K_n \to \cO_X)|_\Z \in \Perf(\Z)^{<w}$ for all $n\gg 0$.  We define functors for $F \in D^- \Coh(\X)$
\begin{gather*}
\rgs{\geq w} (F) := \colim \radj{w} (K_n \otimes F), \text{ and } \\
\rgs{<w} (F) := \lim \ladj{w} (K_n^\dual \otimes F)
\end{gather*}

We must show that these functors land in $D^-\Coh_\S(\X)^{\geq w}$ and $D^-\Coh_\S(\X)^{<w}$ respectively. Note that by construction, the baric truncation functors $\radj{w}$ and $\ladj{w}$ preserve connective objects in $D^- \Coh_\S(\X)$. Therefore, if $P \to F$ is a morphism from a perfect object which is a quasi-isomorphism in low homological degree, then $R\inner{\Gamma}_\S^{\geq w} (P) \to R\inner{\Gamma}_\S^{\geq w} (F)$ is an isomorphism in low homological degree as well (and likewise for $R\inner{\Gamma}_\S^{< w}$). It thus suffices to assume $F$ is perfect. In this case, $F \in D^- \Coh (\X)^{\geq w}$ and $D^- \Coh(\X)^{<v}$ for some $w$ and $v$, and the colimit and limit defining $R\inner{\Gamma}_\S^{\geq w}$ and $R\inner{\Gamma}_\S^{< w}$ stabilize (as in \cite[Lemma 3.37,3.38]{halpern2012derived}). It follows that $R\inner{\Gamma}_\S^{\geq w} (F) \in D^-\Coh_\S(\X)^{\geq w}$ and $R\inner{\Gamma}_\S^{< w} (F) \in D^-\Coh_\S(\X)^{< w}$.

We have a canonical morphism $\rgs{\geq w} F \to F$, so in order to show that $\rgs{\geq w}$ is right adjoint to the inclusion $D^-\Coh_\S(\X)^{\geq w} \subset D^- \Coh(\X)$, it suffices to show that the cone of this morphism lies in the right orthogonal complement, which one can verify is $D^-\Coh(\X)^{<w}$. By Remark \ref{rem:truncating_category} and the right-exactness of $\radj{w}$, it suffices to prove that for $P \in \Perf(\X)$, $\op{Cone}(\rgs{\geq w} P \to P) \in D^-\Coh_\S(\X)^{<w}$. For $n$ sufficiently large, we have $\rgs{\geq w} (P) \simeq \radj{w}(K_n \otimes P)$, and the canonical morphism is the composition $\radj{w}(K_n \otimes P) \to K_n \otimes P \to P$. The cone of the first morphism lies in $D^- \Coh_\S(\X)^{<w}$ by construction, and the cone of the second morphism lies in $D^-\Coh_\S(\X)^{<w}$ by the properties of the Koszul system (having chosen $n$ large enough). Thus we conclude by the octahedral axiom.

The proof that $\rgs{<w}$ is left adjoint to the inclusion $D^- \Coh_\S(\X)^{<w} \subset D^-\Coh_\S(\X)$ follows the same pattern: One must show that $\op{Cone}(F \to \rgs{<w} F) \in D^-\Coh(\X)^{\geq w}$. First one observes that by approximating in low homological degree, it suffices to prove this for $P \in \Perf(\X)$. Then the canonical morphism can be factored as $F \to K_n^\dual \otimes F \to \ladj{w} (K_n^\dual \otimes F) \simeq \rgs{<w} F$ for $n \gg 0$, and the cone of each morphism in the composition lies in $D^-\Coh_\S(\X)^{\geq w}$.

Now that we have shown right (respectively left) admissibility of $D^-\Coh_\S(\X)^{\geq w}$ (respectively $D^-\Coh_\S(\X)^{<w}$), it follows that we have the semiorthogonal decomposition of \autoref{eqn:main_SOD}. Although, one must use the adjunctions $i^\ast \dashv i_\ast$ and $i_\ast \dashv i^\Qshriek$ to identify $\cG_w$ as defined above with the middle term of this semiorthogonal decomposition.

Finally we consider the baric decomposition of $D^-\Coh_\S(\X)$ established in \autoref{lem:baric_decomposition_supports}. First note that the argument for why $i^\ast i_\ast (D^-\Coh(\S)^{\geq w}) \subset D^-\Coh(\S)^{\geq w}$ actually shows that we have a natural isomorphism $\ladj{w+1} i^\ast i_\ast F \simeq F$ as functors from the subcategory $D^- \Coh(\S)^w$. The fully-faithfulness can be deduced from the semiorthogonal decomposition of $D^-\Coh(\S)$: for $F,G \in D^-\Coh(\S)^w$
\begin{align*}
\RHom_\X(i_\ast F,i_\ast G) &\simeq \RHom_{\QC(\S)}(i^\ast i_\ast F, G) \\
&\simeq \RHom_{\QC(\S)}(\ladj{w+1} i^\ast i_\ast F, G) \\
&\simeq \RHom_{\QC(\S)}(F,G)
\end{align*}
The proof that $\pi^\ast : D^-\Coh(\Z)^w \to D^-\Coh(\S)^w$ is an equivalence is similar, the inverse to $\pi^\ast$ being given by $\radj{w} \pi_\ast$.

To show that the baric decomposition is complete, note that there is some $k$ such that $K_n$ is $k$-connective for all $n$. Because  $\radj{w}$ and $\ladj{w}$ preserve connective objects in $D^-\Coh_\S(\X)$, it follows that for any morphism $P \to F$ such that $\tau_{\leq p} P \to \tau_{\leq p} F$ is a quasi-isomorphism, we have that $\tau_{\leq p-k} \rgs{\geq w} P \to \tau_{\leq p-k} \rgs{\geq w} F$ is a quasi-isomorphism as well.

For any $F \in D^-\Coh_\S(\X)$, we can choose a morphism from a perfect complex $P \to F$ which is a quasi-isomorphism after applying $\tau_{\leq p}$. However, for a perfect complex such that $\tau_{\leq p} P \neq 0$, we can always find a $w$ such that $\tau_{\leq p} \rgs{\geq w} P \neq 0$ and a $v$ such that $\tau_{\leq p} \rgs{< v} P \neq 0$. Thus for any non-zero object $F \in D^-\Coh_\S(\X)$, there is some $w$ and $v$ such that $\ladj{w} F \neq 0$ and $\radj{v} F \neq 0$. Hence the baric decomposition is complete.
\end{proof}

One can prove a version of \autoref{thm:derived_Kirwan_surjectivity} for the category $\Perf(\X)$ provided that $\cO_\S$ is perfect as a $\cO_\X$-module. This is a categorified version of Kirwan surjectivity.
\begin{ex}
If $f^\ast ( (\bL_\X|_\Z)^{>0}) \in (k'\Mod)_\heart$ for all points $f:\Spec (k') \to \Z$, where $k'$ is a finite extension of $k$, then $\cO_\S$ is a perfect $\cO_\X$-module.
\end{ex}
\begin{proof}
$\bL_{\S/\X}$ is an almost perfect complex, and so the condition implies that $(\bL_\X|_{\Z})^{>0}$ is perfect. The condition applies to the dual as well, so in fact $(\bL_\X|_\Z)^{<0}$ is a locally free sheaf. \autoref{lem:relative_cotangent_complex} implies that $\bL_{\S/\X}|_{\Z} \simeq (\bL_\X|_\Z)^{>0}[1]$, so $\bL_{\S/\X}$ is a shifted locally free sheaf as well, and $i : \S \to \X$ is a regular embedding.
\end{proof}

\begin{prop} \label{prop:DKS_perfect}
If $\cO_\S$ is a perfect $\cO_\X$-module, then we have a semiorthogonal decomposition
\[ \sod{\ldots, \Perf(\Z)^{w-1},\cG^{perf}_w,\Perf(\Z)^{w},\ldots}, \]
where $\cG^{perf}_w = \cG_w \cap \Perf(\X)$ is equivalent to $\Perf(\X^{ss})$ under the restriction functor.
\end{prop}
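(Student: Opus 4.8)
The plan is to run the proof of \autoref{thm:derived_Kirwan_surjectivity} essentially verbatim, but with every category replaced by its subcategory of perfect complexes. The hypothesis that $\cO_\S$ is perfect over $\cO_\X$ is exactly what makes this possible: as noted before the statement, it forces $i_\ast$ to preserve perfect complexes and $i^\Qshriek$ to be continuous, so that the Koszul system $K_n$, the baric truncations $\radj{w}, \ladj{w}$, and the functors $\rgs{\geq w}, \rgs{<w}$ appearing in that proof all restrict to endofunctors of $\Perf(\X)$. Moreover, by \autoref{lem:baric_decomp}, $\Perf(\S)$ already carries a baric decomposition $\Perf(\S) = \sod{\Perf(\S)^{<w}, \Perf(\S)^{\geq w}}$ compatible with the one on $D^-\Coh(\S)$, and $i_\ast$ now carries it into $\Perf(\X)$.

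First I would upgrade \autoref{lem:baric_decomposition_supports} to $\Perf_\S(\X)$. Since $i^\ast i_\ast$ preserves both $\Perf(\S)$ and (by the $\op{Sym}(\bL_{\S/\X})$-filtration argument of that lemma) the subcategories $D^-\Coh(\S)^{\geq w}$, and since $i_\ast$ preserves $\Perf$, the truncation functors $\radj{w}, \ladj{w}$ restrict to $\Perf_\S(\X)$ and yield a baric decomposition $\Perf_\S(\X) = \sod{\Perf_\S(\X)^{<w}, \Perf_\S(\X)^{\geq w}}$; it is complete by the argument in the proof of \autoref{thm:derived_Kirwan_surjectivity}, since a nonzero perfect complex supported on $\S$ has $\tau_{\leq p}$ nonzero for some $p$, hence nonzero $\radj{v}$ and $\ladj{w}$ for suitable $v, w$. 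Next, exactly as in that proof, for $P \in \Perf(\X)$ the colimit defining $\rgs{\geq w}(P)$ (resp. the limit defining $\rgs{<w}(P)$) stabilizes to $\radj{w}(K_n \otimes P)$ (resp. $\ladj{w}(K_n^\dual \otimes P)$), which is perfect and supported on $\S$; thus $\rgs{\geq w}$ and $\rgs{<w}$ exhibit $\Perf_\S(\X)^{\geq w}$ and $\Perf_\S(\X)^{<w}$ as respectively right- and left-admissible in $\Perf(\X)$. Since the projection functors of \autoref{eqn:main_SOD} preserve $\Perf(\X)$, that three-term semiorthogonal decomposition restricts to $\Perf(\X) = \sod{\Perf_\S(\X)^{<w}, \cG^{perf}_w, \Perf_\S(\X)^{\geq w}}$ with $\cG^{perf}_w = \cG_w \cap \Perf(\X)$.

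It remains to identify the middle term and the graded pieces. For the middle term, restriction $\Perf(\X) \to \Perf(\X^{ss})$ is fully faithful on $\cG^{perf}_w$ by the ``quantization commutes with reduction'' lemma (a perfect complex lies in $\QC(\X)_{<\infty}$), and it is essentially surjective: given $E \in \Perf(\X^{ss})$, extend it to some $\tilde E \in \Perf(\X)$ and apply the projection onto $\cG_w$, which is built from $\rgs{\geq w}, \rgs{<w}$, hence preserves $\Perf(\X)$ and does not change the restriction to $\X^{ss}$. For the graded pieces, $\pi^\ast$ preserves $\Perf$, and since $\sigma^\ast \pi^\ast \simeq \id$ with $\sigma^\ast$ preserving $\Perf$, while the locus in $\S$ over which $\pi^\ast G$ is perfect is open and contains $\sigma(\Z)$ — hence is all of $\S$, as in \autoref{lem:baric_decomp} — one gets an equivalence $\pi^\ast : \Perf(\Z)^w \xrightarrow{\sim} \Perf(\S)^w$; composing with $i_\ast : \Perf(\S)^w \xrightarrow{\sim} \Perf_\S(\X)^w$ (fully faithful as in \autoref{thm:derived_Kirwan_surjectivity}, and essentially surjective because any $F \in \Perf_\S(\X)^w$ equals $i_\ast(\ladj{w+1} i^\ast F)$ with $\ladj{w+1} i^\ast F$ perfect) identifies $\Perf_\S(\X)^w$ with $\Perf(\Z)^w$. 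Splicing the complete baric decomposition of $\Perf_\S(\X)$ into the three-term decomposition then produces the asserted infinite semiorthogonal decomposition.

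The only content beyond transcription is the bookkeeping that each functor preserves $\Perf$, and the place this is genuinely delicate is the graded-piece identification: because $\pi : \S \to \Z$ need not be flat or even smooth (\autoref{ex:derived_stratum}), one cannot conclude perfectness of $\pi^\ast G$ by a Tor-dimension estimate, and must instead use that $\sigma$ is a section together with openness of the perfect locus, exactly as in \autoref{lem:baric_decomp}. A minor additional subtlety is that $E \in \Perf(\X^{ss})$ may only extend to $\Perf(\X)$ after adding a direct summand, which is harmless since $\cG^{perf}_w$ is idempotent complete and $\cG^{perf}_w \to \Perf(\X^{ss})$ is fully faithful.
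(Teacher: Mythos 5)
Your proposal is correct and follows essentially the same route as the paper: the hypothesis makes $i_\ast$ preserve perfect complexes, so $\Perf_\S(\X)$ inherits a bounded baric decomposition, the functors $\rgs{\geq w}$ and $\rgs{<w}$ stabilize on perfect objects and hence preserve $\Perf(\X)$, and essential surjectivity of $\cG^{perf}_w \to \Perf(\X^{ss})$ follows from generation up to retracts together with idempotent completeness of $\cG^{perf}_w$, exactly as in your closing remark. The one quibble is immaterial: perfectness of $\pi^\ast G$ is automatic (pullback along any morphism preserves perfect complexes), so the openness-of-the-perfect-locus argument is only relevant in the inverse direction — showing $\radj{w}\pi_\ast F \simeq \sigma^\ast F$ is perfect — which your use of the section $\sigma$ already handles.
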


\begin{proof}
The hypothesis implies that $i$ has finite Tor amplitude, so $i_\ast$ maps $\Perf(\S)$ to $\Perf(\X)$. In particular, $\Perf_\S(\X)$ is generated up to retracts by the essential image of $i_\ast$, and so $\Perf_\S(\X)$ obtains a bounded baric decomposition for the same reason as above. It follows that for any $F \in \Perf(\X)$, we have $\rgs{\geq w} F \in \Perf_\S(\X)^{\geq w}$ and $\rgs{<w} F \in \Perf_\S(\X)^{<w}$, and thus the projection of $F$ to $\cG_w$ lands in $\cG_w^{perf}$, which establishes the semiorthogonal decomposition. To conclude that $\cG^{perf}_w \to \Perf(\X^{ss})$ is an equivalence, it suffices to show that it is essentially surjective. The image of the restriction functor generates $\Perf(\X^{ss})$ up to retracts, but $\cG^{perf}_w$ is idempotent complete by its definition, and $\cG^{perf}_w \to \Perf(\X^{ss})$ is fully faithful, so it follows that the restriction functor is essentially surjective.
\end{proof}

\section{The case of quasi-smooth stacks}

In this section we establish a modified version of \autoref{thm:derived_Kirwan_surjectivity} when $\X$ is quasi-smooth. The key observation in this context is the following:

\begin{lem} \label{lem:finite_tor_amplitude}
When $\X$ is quasi-smooth, then $\S$ and $\Z$ are quasi-smooth, and the projection $\pi : \S \to \Z$ is quasi-smooth as well.
\end{lem}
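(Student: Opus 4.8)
The plan is to control the three cotangent complexes $\bL_\S$, $\bL_\Z$ and $\bL_{\S/\Z}$ and invoke the characterization that a locally finitely presented derived stack (resp.\ morphism) is quasi-smooth exactly when its cotangent complex is perfect with $H_n(-)=0$ for $n>1$. For $\S$ this is essentially immediate from what we already have: by \autoref{lem:relative_cotangent_complex} we may identify $\bL_\S\simeq\ladj{1}(i^\ast\bL_\X)$ and $\bL_{\S/\X}\simeq(\radj{1}i^\ast\bL_\X)[1]$. Since $\X$ is quasi-smooth, $\bL_\X$ is perfect with $H_n=0$ for $n>1$, and the same holds for $i^\ast\bL_\X$ because $i^\ast$ preserves perfectness and is right $t$-exact. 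By \autoref{lem:baric_decomp} the baric truncation functors preserve $\Perf(\S)$, and by the analysis behind \autoref{rem:homology} they commute with homology, hence also preserve the vanishing of $H_n$ for $n>1$. Therefore $\bL_\S=\ladj{1}(i^\ast\bL_\X)$ is perfect and homologically $\le 1$, so $\S$ is quasi-smooth; note this also shows $\bL_{\S/\X}$ is perfect, even though $i$ itself is typically not quasi-smooth (cf.\ \autoref{ex:derived_stratum}).

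Next I would handle $\Z$ and the $\sigma$-pullback of $\bL_{\S/\Z}$ together, using the section. Since $\pi\circ\sigma=\mathrm{id}_\Z$, the composition law for cotangent functoriality maps shows that $\bL_\Z=\sigma^\ast\pi^\ast\bL_\Z\to\sigma^\ast\bL_\S$ is a section of the natural map $\sigma^\ast\bL_\S\to\bL_\Z$; hence applying $\sigma^\ast$ to the transitivity triangle $\pi^\ast\bL_\Z\to\bL_\S\to\bL_{\S/\Z}$ yields a split triangle $\sigma^\ast\bL_\S\simeq\bL_\Z\oplus\sigma^\ast\bL_{\S/\Z}$. Thus $\bL_\Z$ and $\sigma^\ast\bL_{\S/\Z}$ are retracts of $\sigma^\ast\bL_\S$, which is perfect and homologically $\le 1$ by the previous step; both properties pass to retracts, so $\Z$ is quasi-smooth and $\sigma^\ast\bL_{\S/\Z}$ is perfect with $H_n=0$ for $n>1$.

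It then remains to treat $\pi$. From the transitivity triangle $\pi^\ast\bL_\Z\to\bL_\S\to\bL_{\S/\Z}$, the complex $\bL_{\S/\Z}$ is the cofiber of a map between perfect complexes (by the first two steps), hence perfect; the only thing left is to show $H_n(\bL_{\S/\Z})=0$ for $n>1$, i.e.\ to promote the homological bound from $\Z$ — where we have it — to all of $\S$. Since $\bL_{\S/\Z}$ is perfect, the locus $U\subseteq\S$ on which its derived fibers are concentrated in degrees $\le 1$ is open; it is invariant under the $\Gm$-action on $\S=\Map(\Theta,\X)$ induced by $\mathrm{Aut}(\Theta)$ — for which $\pi$, and hence $\bL_{\S/\Z}$, is equivariant (with trivial action on $\Z$) — and it contains $\sigma(\Z)$ by the previous step. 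Because this $\Gm$-action contracts $\S$ onto its fixed substack $\Z=\sigma(\Z)$, with retraction $\sigma\circ\pi$, any $\Gm$-invariant open containing $\Z$ must be all of $\S$; therefore $U=\S$ and $\pi$ is quasi-smooth. I expect this last step to be the main obstacle: everything preceding it is formal manipulation of transitivity triangles together with \autoref{lem:relative_cotangent_complex}, whereas bounding the Tor-amplitude of $\bL_{\S/\Z}$ over all of $\S$ rather than just over $\Z$ genuinely uses the contracting flow — equivalently, the modular interpretation of $\S$, under which $\bL_{\S/\Z}$ is a finite sum of $\Gm$-weight spaces of a twist of $\bL_\X|_{B\Gm}$, making the bound manifest.
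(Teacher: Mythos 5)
Your steps for $\Z$ and for $\pi$ are correct: the splitting $\sigma^\ast\bL_\S \simeq \bL_\Z \oplus \sigma^\ast\bL_{\S/\Z}$ obtained formally from $\pi\circ\sigma = \mathrm{id}$ is a nice substitute for the paper's ``argument completely analogous to \autoref{lem:relative_cotangent_complex}'' (though, unlike the paper's version, it does not identify $\bL_\Z$ and $\bL_{\Z/\S}[-1]$ with the weight summands of $\sigma^\ast\bL_\S$, an identification the paper reuses later in the localization theorem), and your propagation step for $\pi$ --- openness of the locus where a perfect complex has fiber homology in the allowed range, plus the fact that the only open substack of $\S$ containing the image of $\sigma$ is $\S$ itself --- is exactly the mechanism the paper relies on (and records in the proof of \autoref{lem:baric_decomp}); you do not need the $\mathrm{Aut}(\Theta)$-action for the invariance, since an open substack of $\S$ corresponds to a $P$-invariant open subset of $Y$, which is automatically $\lambda$-invariant, and every point of $Y$ flows into $Z$. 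The genuine gap is in your first step: the baric truncation functors do not ``commute with homology'', and \autoref{rem:homology} (which concerns the \emph{weights} of the homology sheaves, not their homological degrees) does not imply that $\ladj{1}$ preserves the condition $H_n=0$ for $n>1$. This inference is false in general: on a stratum with nontrivial derived structure as in \autoref{ex:derived_stratum}, say $\cA_\lambda = k[u;du=0]$ with $u$ of homological degree $1$ and negative weight, the skyscraper $k$ of weight $0$ has $\radj{0}k \simeq \cA_\lambda$, hence $\ladj{0}k \simeq \cofib(\cA_\lambda \to k)$ with $H_2(\ladj{0}k) \cong H_1(\cA_\lambda) \neq 0$. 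So ``$i^\ast\bL_\X$ has homology in degrees $\leq 1$, therefore so does $\ladj{1}i^\ast\bL_\X$'' is not a formal consequence of the baric machinery; the truncations only preserve the \emph{lower} homological bound (connectivity), as the paper notes.

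The statement you need is nevertheless true, but its proof is precisely the restriction-to-$\Z$-plus-propagation argument that you deploy only at the end, and which is the paper's route at every stage: restricting \autoref{lem:relative_cotangent_complex} along $\sigma$ gives $\sigma^\ast\bL_\S \simeq (\bL_\X|_\Z)^{\leq 0}$, which is an honest direct summand of the perfect complex $\bL_\X|_\Z$ of amplitude $[-1,1]$, because on $\Z$ the baric decomposition is the $\lambda$-weight decomposition; since $\bL_\S$ is almost perfect, the locus where it is perfect with fiber homology in degrees $-1,0,1$ is open, contains the image of $\sigma$, and hence is all of $\S$. With that repair (and the same argument quoted for $\bL_{\S/\Z}$, as you do), your proof coincides in substance with the paper's, which verifies everything after pulling back to $\Z$ and then propagates over $\S$ in exactly this way.
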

\begin{proof}
\autoref{lem:relative_cotangent_complex} shows that $\bL_\S|_\Z \simeq (\bL_\X|_\Z)^{\leq 0}$ is perfect and concentrated in degrees $-1$,$0$, and $1$, hence $\bL_\S$ is perfect with fiber homology concentrated in those degrees as well. An argument completely analogous to the proof of \autoref{lem:relative_cotangent_complex} shows that we have an equivalence of exact triangles in $D^-\Coh(\Z)$
\[
\xymatrix{ (\sigma^\ast \bL_\S)^{< 0} \ar[r] \ar[d] & \sigma^\ast \bL_\S \ar[r] \ar@{=}[d] & (\sigma^\ast \bL_\S)^{\geq 0} \ar[r] \ar[d] & \\ \bL_{\Z/\S}[-1] \ar[r] & \sigma^\ast \bL_\S \ar[r] & \bL_\Z \ar[r] & }
\]
So $\bL_\Z$ is perfect with fiber homology concentrated in degree $-1,0,$ and $1$, and thus $\Z$ is quasi-smooth as well. Finally, considering the fiber sequence of cotangent complexes coming from the composition $\Z \xrightarrow{\sigma} \S \xrightarrow{\pi} \Z$, we have an isomorphism $\sigma^\ast \bL_{\pi : \S \to \Z} \simeq \bL_{\Z/\S}[-1]$, so this is a perfect complex with fiber homology in degree $-1,0,$ and $1$ as well.

\end{proof}

We shall prove
\begin{thm} \label{thm:derived_Kirwan_surjectivity_quasi-smooth}
Let $\X$ be a quasi-smooth quotient stack containing a $\Theta$-stratum $\S \subset \X$. Assume that $H_1(f^\ast((\bL_\X|_\Z)^{<0})) = 0$ for every $k'$-point $f : \Spec k' \to \Z$, where $k'/k$ is a finite field extension. Then for any $w \in \bZ$, there is a semiorthogonal decomposition\footnote{This is an infinite semiorthgonal decompositions in the usual sense, so that every $F \in D^b\Coh(\X)$ lies in a subcategory generated by finitely many semiorthogonal factors.}

\begin{equation} \label{eqn:main_SOD_quasi_smooth}
D^b \Coh(\X) = \sod{ \underbrace{\ldots, D^b\Coh(\Z)_{w-1}}_{D^b\Coh_\S(\X)^{<w}},\cG^b_w, \underbrace{D^b \Coh(\Z)_{w},D^b \Coh(\Z)_{w+1},\ldots}_{D^b\Coh_\S(\X)^{\geq w}} },
\end{equation}
where we have identified $D^b \Coh_\S(\X)_w$ with the essential image of the fully faithful functor $i_\ast \pi^\ast : D^b\Coh(\Z)_w \to D^b\Coh(\X)$, and $\cG^b_w := \cG_w \cap D^b\Coh(\X)$ is equivalent to $D^b \Coh(\X^{ss})$ under the restriction functor.
\end{thm}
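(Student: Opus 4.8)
The plan is to deduce the bounded statement from the already-established $D^-$ version, \autoref{thm:derived_Kirwan_surjectivity}, by showing that under the hypothesis $H_1(f^\ast((\bL_\X|_\Z)^{<0})) = 0$ all the relevant functors preserve bounded coherent complexes. The key geometric input is \autoref{lem:finite_tor_amplitude}: since $\X$ is quasi-smooth, $\S$, $\Z$ and $\pi : \S \to \Z$ are quasi-smooth, so the cotangent complexes have Tor-amplitude in $[-1,1]$. First I would check that the hypothesis forces $\bL_{\pi}$ to have Tor-amplitude in $[0,1]$: by \autoref{lem:relative_cotangent_complex}, $\bL_{\S/\X}|_\Z \simeq (\bL_\X|_\Z)^{>0}[1]$, and combining with the triangle for $\bL_\pi$ in the proof of \autoref{lem:finite_tor_amplitude}, the assumption $H_1(f^\ast(\bL_\X|_\Z)^{<0}) = 0$ says exactly that $\sigma^\ast\bL_\pi = \bL_{\Z/\S}[-1]$ has no homology in degree $-1$, i.e. $\pi$ has finite Tor-amplitude. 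Hence $\pi^\ast$ carries $D^b\Coh(\Z)$ into $D^b\Coh(\S)$, and (because $\pi$ has a section) the push-forward $\pi_\ast$ together with the baric truncation $\radj{w}$ recovers the inverse on the weight-$w$ piece, as in the proof of \autoref{thm:derived_Kirwan_surjectivity}; thus $i_\ast\pi^\ast : D^b\Coh(\Z)_w \to D^b\Coh(\X)$ is fully faithful with the stated essential image.

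Next I would show the baric truncation functors $\radj{w}, \ladj{w}$ and the local-cohomology functors $\rgs{\geq w}, \rgs{<w}$ preserve $D^b\Coh_\S(\X)$ and $D^b\Coh(\X)$ respectively. Boundedness above is automatic; the point is boundedness below. On $\S$: by Remark \ref{rem:homology} the baric truncations of a bounded object are controlled by homology sheaves, so $\radj{w}, \ladj{w}$ preserve $D^b\Coh(\S)$, and since $i_\ast$ is now finite-Tor-dimension (as $\cO_\S$ is perfect over $\cO_\X$ — here \autoref{ex:derived_stratum}-style reasoning plus \autoref{lem:relative_cotangent_complex} and the $H_1$-vanishing give that $i$ is a quasi-smooth, in fact finite-Tor, closed immersion), $i_\ast$ carries $D^b\Coh(\S)$ into $D^b\Coh(\X)$ and conversely $i^\ast, i^\Qshriek$ preserve bounded coherence. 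For $\rgs{\geq w}, \rgs{<w}$: the Koszul system $K_n$ consists of perfect complexes of uniformly bounded Tor-amplitude and uniform connectivity, so for $F \in D^b\Coh(\X)$ perfect the colimit/limit defining $\rgs{\geq w}F, \rgs{<w}F$ stabilizes (exactly as in the $D^-$ proof, \cite[Lemma 3.37, 3.38]{halpern2012derived}), giving a bounded object; a general bounded $F$ is approximated in low degree by a perfect complex and one controls the error using uniform connectivity of $\radj{w}, \ladj{w}$. Therefore $\rgs{\geq w}$ restricts to a right adjoint of $D^b\Coh_\S(\X)^{\geq w} \subset D^b\Coh(\X)$ and $\rgs{<w}$ to a left adjoint of $D^b\Coh_\S(\X)^{<w} \subset D^b\Coh(\X)$, so the semiorthogonal decomposition \autoref{eqn:main_SOD_quasi_smooth} follows from intersecting \autoref{eqn:main_SOD} with $D^b\Coh(\X)$, with $\cG^b_w = \cG_w \cap D^b\Coh(\X) \simeq D^b\Coh(\X^{ss})$ via restriction.

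It remains to upgrade the complete baric decomposition of $D^-\Coh_\S(\X)$ to the \emph{infinite} semiorthogonal decomposition of $D^b\Coh_\S(\X)$ indexed by the weights, with factors $D^b\Coh(\Z)_w$: by the last paragraph of the proof of \autoref{thm:derived_Kirwan_surjectivity}, $D^-\Coh_\S(\X)^w \simeq i_\ast\pi^\ast D^-\Coh(\Z)^w$, and now that $i_\ast\pi^\ast$ preserves bounded objects this identifies $D^b\Coh_\S(\X)^w$ with $i_\ast\pi^\ast D^b\Coh(\Z)_w$; completeness of the baric decomposition (also from that proof) means every bounded object lies in the span of finitely many of these weight pieces, which is precisely the meaning of the footnoted infinite semiorthogonal decomposition. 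The main obstacle I anticipate is the boundedness-below bookkeeping: one must verify that finite Tor-amplitude of $i$ and $\pi$ (rather than merely quasi-smoothness) genuinely follows from the $H_1$-vanishing hypothesis — this is where the hypothesis is used in an essential way, ruling out the pathological behaviour of \autoref{ex:derived_stratum} with $a < 0$ — and then to track uniform connectivity/Tor-amplitude bounds through the colimit defining $\rgs{\geq w}$ carefully enough to conclude stabilization on bounded objects, rather than just on perfect ones.
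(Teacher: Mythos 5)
Your overall strategy---deduce the bounded statement from \autoref{thm:derived_Kirwan_surjectivity} by showing that the baric truncations and the functors $\rgs{\geq w},\rgs{<w}$ preserve bounded coherent complexes---is indeed the paper's strategy, but you have misidentified what the hypothesis $H_1(f^\ast((\bL_\X|_\Z)^{<0}))=0$ buys, and the step you build on it is false. The hypothesis does not make $i:\S\hookrightarrow\X$ a finite-Tor (regular) closed immersion, nor is it what makes $\pi$ finite Tor: $\pi$ is quasi-smooth unconditionally by \autoref{lem:finite_tor_amplitude}, while perfection of $\cO_\S$ over $\cO_\X$ is governed by the \emph{positive}-weight part $(\bL_\X|_\Z)^{>0}$ (see the example preceding \autoref{prop:DKS_perfect}); in \autoref{ex:derived_stratum} with $a>0$ the hypothesis of the theorem holds and yet $i$ is explicitly not a regular embedding. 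Consequently your claims that $\cO_\S$ is perfect over $\cO_\X$ and that $i^\ast$, $i^\Qshriek$ preserve bounded coherence fail in exactly the situations the theorem is meant to cover. What the hypothesis actually yields is that $\sigma:\Z\to\S$ has finite Tor amplitude (the top homology of $\bL_{\Z/\S}\simeq(\sigma^\ast\bL_\X)^{<0}[1]$ is killed), and this is the engine of \autoref{lem:quasi_smooth_baric_decomp}: since $\sigma^\ast$ preserves $D^b\Coh$, a bounded complex on $\S$ has only finitely many nonzero baric pieces, each bounded, so $\radj{w},\ladj{w}$ preserve $D^b\Coh(\S)$ and $D^b\Coh_\S(\X)$ and the induced baric decompositions are bounded. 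Your appeal to \autoref{rem:homology} for this is not enough: that remark characterizes membership in the weight subcategories by homology sheaves, but says nothing about the homological amplitude of the truncations of a bounded object.

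The second, larger gap is boundedness of $\rgs{\geq w}F$ and $\rgs{<w}F$ for bounded but non-perfect $F$. Stabilization of the colimit requires $\op{Cone}(K_n\otimes F\to F)\in D^b\Coh(\X)^{<w}$ for $n\gg 0$, i.e. that every $F\in D^b\Coh(\X)$ lies in some window $D^b\Coh(\X)^{\geq v}\cap D^b\Coh(\X)^{<w}$; this is \autoref{lem:bounded_SOD}. The lower bound comes from the bounded baric decomposition just discussed, but the upper bound is a condition on $i^\Qshriek F$, which the paper obtains via Serre duality for quasi-smooth stacks (\autoref{lem:duality}, resting on \autoref{lem:relative_dualizing_complex}), converting the $i^\Qshriek$-condition into an $i^\ast$-condition on $\bD_\X F$ (\autoref{rem:alternate_characterization}). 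Your proposal omits this duality mechanism entirely, and the substitute---stabilization for perfect $F$ plus approximation of a general bounded $F$ in low homological degree---only identifies $\rgs{\geq w}F$ degree by degree, which the $D^-$ theorem already provides; it cannot show the output is bounded. If such a soft argument sufficed, the theorem would hold without the $H_1$-vanishing hypothesis, contradicting the explicit counterexample given right after the statement of \autoref{thm:derived_Kirwan_surjectivity_quasi-smooth}. Relatedly, completeness of the baric decomposition does not mean that every bounded object lies in finitely many weight pieces; that finiteness is again a consequence of \autoref{lem:quasi_smooth_baric_decomp} and \autoref{lem:bounded_SOD}, not of completeness.
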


\begin{ex}
Consider the derived cotangent stack of a smooth global quotient, $\X = T^\ast (X/G)$. This has an explicit description as the quotient of the derived scheme $R\Spec_X(\op{Sym}(\fg \to \Omega^1_X))$ by the action of $G$, where $\Omega^1_\X$ is in degree $0$, and $\fg \to \Omega^1_X$ is the co-moment map. For any point $x \in \X$, $H_{1}(\bL_\X|_x)$ is the lie algebra of the stabilizer subgroup. If $\X$ has a KN-stratification, then this will automatically lie in the parabolic subgroup $P_\lambda$, and hence the hypotheses of \autoref{thm:derived_Kirwan_surjectivity_quasi-smooth} are satisfied.
\end{ex}

\begin{ex} We observe that the semiorthogonal decomposition of \autoref{thm:derived_Kirwan_surjectivity} can fail to preserve $D^b\Coh(\X)$ for quasi smooth $\X$, so some hypothesis in \autoref{thm:derived_Kirwan_surjectivity_quasi-smooth} is necessary. Consider a linear $\Gm$ action on $\bA^n$ having positive and negative weights, and choose $\lambda(t) = t$ so that the unstable subspace is the subspace with positive weights. Let $X \subset \bA^n$ be the hypersurface defined by an homogeneous polynomial $f$ of negative weight, and assume that $f(0)=0$. Finally assume that $Crit(f) \cap \bA^n_{+}$ is not isolated from the rest of $Crit(f)$.

Under these assumptions there is an object $F \in D^b\Coh(X^{ss}/\Gm)$ which fails to be perfect in any neighborhood of $S \subset X$, and thus for any extension to $D^b\Coh(X/\Gm)$, the derived restriction to $\{0\}$ must be unbounded. Let $\tilde{F}$ be the unique object of $\cG_w \subset D^-\Coh(\X)$ such that $\tilde{F}|_{X^{ss}/\Gm} \simeq F$. If $\tilde{F} \in D^b\Coh(\X)$, then it would admit a presentation as a right-bounded complex of locally free sheaves which was eventually periodic up to a shift by a character of $\Gm$ -- specifically $F_n = F_{n+2}(\op{wt}(f))$. This contradicts the fact that $\tilde{F}|_{\{0\}}$ has weight $\geq w$ and is unbounded.
\end{ex}

One can use \autoref{thm:derived_Kirwan_surjectivity_quasi-smooth} to study the change in $D^b\Coh$ under a variation of GIT quotient. Specifically, we consider a balanced wall crossing in the language of \cite{halpern2012derived} (An elementary wall crossing in the language of \cite{ballard2012variation}).

\begin{cor} \label{cor:wall_crossing}
Let $\X$ be a quasi-smooth quotient stack containing two $\Theta$-strata $\S_\pm$ such that $\Z_+ \simeq \Z_-$ and $\lambda_+(t) = \lambda_-(t^{-1})$ in a quotient presentation of $\X$. Assume that $H_1(f^\ast \bL_\X)$ has weight $0$ for every $f : \Spec k' \to \Z$ for finite extensions $k'/k$. Let $c$ be the $\lambda_+$-weight of $\det (\bL_\X|_\Z)$.
\begin{enumerate}
\item If $c = 0$, then $D^b\Coh(\X^{ss}_+) \simeq D^b\Coh(\X^{ss}_-)$.
\item If $c>0$, then there is a fully faithful embedding $D^b\Coh(\X^{ss}_+) \subset D^b\Coh(\X^{ss}_-)$.
\item If $c<0$, then there is a fully faithful embedding $D^b\Coh(\X^{ss}_-) \subset D^b\Coh(\X^{ss}_+)$.
\end{enumerate}
\end{cor}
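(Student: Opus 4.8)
The plan is to apply Theorem \ref{thm:derived_Kirwan_surjectivity_quasi-smooth} to each of the two strata $\S_\pm$ and compare the resulting semiorthogonal decompositions. First I would check that the hypothesis $H_1(f^\ast((\bL_\X|_{\Z_\pm})^{<0})) = 0$ needed for that theorem follows from the stated weight-$0$ condition on $H_1(f^\ast\bL_\X)$: since $\Z_+ \simeq \Z_-$ as stacks and $\lambda_+ = \lambda_-^{-1}$, the negative-weight part for $\lambda_+$ is the positive-weight part for $\lambda_-$, and in any case $H_1(f^\ast\bL_\X)$ has weight $0$ so it contributes nothing to either $(\bL_\X|_{\Z})^{<0}$ or $(\bL_\X|_{\Z})^{>0}$. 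Thus both $\Theta$-strata satisfy the hypotheses, and for each choice of integer window $w_\pm$ we get
\[
D^b\Coh(\X) = \sod{\ldots, D^b\Coh(\Z_\pm)_{w_\pm-1},\ \cG^b_{w_\pm},\ D^b\Coh(\Z_\pm)_{w_\pm},\ldots},
\]
with $\cG^b_{w_\pm} \simeq D^b\Coh(\X^{ss}_\pm)$ via restriction, where $\X^{ss}_\pm = \X \setminus \S_\pm$.

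Next I would pin down how the two decompositions interact. Both are indexed by the weight with respect to (essentially) the same one-parameter subgroup up to inversion, and both have the ``graded pieces'' $D^b\Coh(\Z)_j$ (identifying $\Z_+ \simeq \Z_- =: \Z$) sitting inside $D^b\Coh_{\S_\pm}(\X)$. The key computation is to determine, for a fixed object $E \in D^b\Coh(\Z)_j$, the two twisting conventions: under $\S_+$ it sits with $\lambda_+$-weight $j$, and under $\S_-$ it sits with $\lambda_+$-weight $-j$, but the relevant comparison is between $i_{+,\ast}\pi_+^\ast$ and $i_{-,\ast}\pi_-^\ast$, which differ by a twist involving $\det(\bL_{\S_\pm/\X}|_\Z)$ — and by Lemma \ref{lem:relative_cotangent_complex} this determinant is $\det((\bL_\X|_\Z)^{>0})$ (resp.\ the $\lambda_-$-version), whose $\lambda_+$-weight is exactly the number $c = \operatorname{wt}_{\lambda_+}\det(\bL_\X|_\Z)$ after accounting for the weight-$0$ part $H_1$ (which, being in degree $1$, enters $\det$ with the opposite sign but has weight $0$ and so does not affect $c$). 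This is the standard ``magic window'' bookkeeping: choosing the windows $w_+$ and $w_-$ compatibly, the middle factor $\cG^b_{w_+}$ and $\cG^b_{w_-}$ either coincide as subcategories of $D^b\Coh(\X)$ (when $c=0$), or one is contained in the other with the difference accounted for by $\lfloor |c| \rfloor$-many graded pieces $D^b\Coh(\Z)_j$ (when $c \neq 0$).

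Concretely: when $c = 0$ the two $\Theta$-strata cut out complementary pieces symmetrically, so one can choose $w_+ = w_-$ and both $\cG^b$'s are the same subcategory $\sod{D^b\Coh(\Z)_{<w}}^\perp \cap {}^\perp\sod{D^b\Coh(\Z)_{\geq w}}$ of $D^b\Coh(\X)$, giving $D^b\Coh(\X^{ss}_+) \simeq \cG^b_w \simeq D^b\Coh(\X^{ss}_-)$. When $c > 0$, the symmetric choice of windows gives $\cG^b_{w_+} \subset \cG^b_{w_-}$ because the ``gap'' $D^b\Coh_{\S_+}(\X)^{<w} \cap D^b\Coh_{\S_-}(\X)^{<w}$ is nonempty of ``width $c$'', yielding the fully faithful embedding $D^b\Coh(\X^{ss}_+) \hookrightarrow D^b\Coh(\X^{ss}_-)$; the case $c<0$ is identical with the roles of $\pm$ reversed. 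The main obstacle I anticipate is the second step: carefully matching the twisting conventions in the two identifications $i_{\pm,\ast}\pi_\pm^\ast$ so that the inclusion of windows is literally an inclusion of subcategories of $D^b\Coh(\X)$ and the count of intermediate graded pieces comes out to be controlled by $c$ — this requires tracking the line bundle $\det(\bL_{\S/\X}|_\Z)$ through Lemma \ref{lem:relative_cotangent_complex} and verifying that the weight-$0$ hypothesis on $H_1$ is exactly what makes the combinatorics depend only on the single integer $c$ rather than on finer data of the weight decomposition of $\bL_\X|_\Z$.
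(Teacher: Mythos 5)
Your proposal is correct and follows essentially the same route as the paper: apply \autoref{thm:derived_Kirwan_surjectivity_quasi-smooth} on each side of the wall and compare the two grade-restriction windows, whose sizes differ by exactly $c$, with the weight-$0$ hypothesis on $H_1(f^\ast\bL_\X)$ guaranteeing both strata satisfy the theorem's hypothesis and that the comparison depends only on $c$. The only cosmetic difference is that the paper measures the window size by the weight $a$ of $\omega_{\S_\pm}|_\Z$ via \autoref{lem:duality} and \autoref{lem:relative_dualizing_complex}, whereas you track $\det(\bL_{\S_\pm/\X}|_\Z)$ through \autoref{lem:relative_cotangent_complex}; these amount to the same bookkeeping.
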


\begin{rem}
As in the smooth case, the semiorthogonal complement to the embeddings in (2) and (3) has a further semiorthogonal decomposition into a number of categories of the form $D^b\Coh(\Z)_w$.
\end{rem}

\begin{proof}
The argument is the same as in \cite{halpern2012derived}. Applying \autoref{thm:derived_Kirwan_surjectivity_quasi-smooth} on either side of the wall, the only difference in the description of $\cG^b_w$ is the size of the window, $a$ (one also has to modify $w$ to account for the fact that $\lambda_+ = \lambda_-^{-1}$). In light of \autoref{lem:relative_dualizing_complex}, the difference between the value of $a$ on either side of the wall is the weight of $\det(\bL_\X|_\Z)$.
\end{proof}

\subsection{Remarks on Serre duality for quasi-smooth stacks}

We recall the key constructions of Serre duality in the derived setting from \cite[Section 4.4]{drinfeld2013some}. For any quasi-compact derived algebraic stack with affine stabilizers, there is a complex $\omega_\X \in D^b\Coh(\X)$ such that $\bD_\X(\bullet) := \inner{\RHom}_{QC}(\bullet,\omega_\X)$ induces an equivalence $D^b\Coh(\X) \to D^b\Coh(\X)^{op}$. In fact we have $\omega_\X = \pi^! (k)$ where $\pi$ is the projection to $\Spec k$ and $\pi^! : \icoh(\Spec k) \to \icoh(\X)$ is the shriek pullback functor on ind-coherent sheaves.

\begin{lem} \label{lem:absolute_dualizing complex}
Let $\X$ be a quasi-projective quotient stack which is quasi-smooth, then $$\omega_{\X} \simeq \det(\bL_\X)[\op{rank} \bL_\X].$$
\end{lem}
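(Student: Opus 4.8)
The plan is to reduce to two standard facts — the relative dualizing complex of a smooth morphism, and of a quasi-smooth closed immersion — by embedding $\X$ into a smooth quotient stack and then transporting the formula $\omega \simeq \det(\bL)[\rank \bL]$ along the embedding using the cotangent complex triangle. Since $\X$ is a quasi-projective quotient stack, write $\X = X/G$ with $G$ affine acting with a linearization on the quasi-projective, quasi-smooth derived scheme $X$. Because $X$ is quasi-smooth it admits a $G$-equivariant closed immersion $X \hookrightarrow M$ into a smooth quasi-projective $G$-scheme $M$, realizing $X$ as the derived zero locus of an equivariant section of an equivariant vector bundle on $M$ (this is immediate Zariski-locally, where $X$ is affine; since $\omega_\X$, $\det \bL_\X$ and $\rank \bL_\X$ all restrict compatibly along open immersions one may in any case reduce to the affine case and patch afterward). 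This produces a closed immersion $i \colon \X \hookrightarrow \cM := M/G$ with $\cM$ a smooth quotient stack; a quick chase of the triangle $i^\ast\bL_\cM \to \bL_\X \to \bL_i$ using quasi-smoothness of $\X$ and smoothness of $\cM$ shows that $i$ is quasi-smooth, i.e. $\bL_i \simeq \bL_{\X/\cM}$ is a vector bundle placed in homological degree $1$, of some rank $-c$.

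Next I would invoke the two inputs. First, for the smooth quotient stack $\cM$ one has $\omega_\cM \simeq \det(\bL_\cM)[\rank \bL_\cM]$ with $\rank \bL_\cM = \dim\cM$: factoring $\cM \to BG \to \Spec k$, the morphism $\cM \to BG$ is smooth and representable, so $\omega_{\cM/BG} \simeq \det(\bL_{\cM/BG})[\dim M]$ by the classical formula for smooth morphisms, while $\omega_{BG} \simeq \det(\bL_{BG})[\rank\bL_{BG}] = (\det\fg)[-\dim G]$ is checked directly after $!$-pullback along the atlas $\Spec k \to BG$ (or cited from \cite[Section 4.4]{drinfeld2013some}), and one composes using $q^\ast\bL_{BG}\to\bL_\cM\to\bL_{\cM/BG}$. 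Second, for the quasi-smooth closed immersion $i$ one has
\[
\omega_i := i^{!}\cO_\cM \simeq \det\bigl(\cN_{\X/\cM}\bigr)[-c] \simeq \det(\bL_i)[\rank\bL_i],
\]
where $\cN_{\X/\cM} = (\bL_i[-1])^\vee$ is the normal bundle; this is the Koszul self-duality of $i_\ast\cO_\X$ as a perfect $\cO_\cM$-complex, see \cite[Section 4.4]{drinfeld2013some} or \cite{gaitsgory2013ind}. Throughout I am using the conventions $\det(V[n]) = \det(V)^{\otimes(-1)^n}$ and additivity of $\rank$ in cofiber sequences.

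Finally I would combine these. Since $\omega_\cM$ is a shifted line bundle, hence perfect, the projection formula for $i^{!}$ gives
\[
\omega_\X = i^{!}\omega_\cM \simeq i^\ast(\omega_\cM)\otimes\omega_i \simeq \bigl(i^\ast\det(\bL_\cM)\otimes\det(\bL_i)\bigr)[\dim\cM + \rank\bL_i].
\]
On the other hand the cofiber sequence $i^\ast\bL_\cM \to \bL_\X \to \bL_i$ of perfect complexes yields $\det(\bL_\X) \simeq i^\ast\det(\bL_\cM)\otimes\det(\bL_i)$, and additivity of rank gives $\rank\bL_\X = \rank(i^\ast\bL_\cM)+\rank\bL_i = \dim\cM + \rank\bL_i$. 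Substituting shows $\omega_\X \simeq \det(\bL_\X)[\rank\bL_\X]$, as claimed.

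The mathematical content is light: the point requiring genuine care is the normalization bookkeeping — the sign rule for determinants of shifted complexes, the additivity of $\rank$, and in particular getting $\omega_{BG} = (\det\fg)[-\dim G]$ and the dualizing complex of a quasi-smooth immersion exactly right, including the equivariant line bundles $\det\fg$ and $\det\cN_{\X/\cM}$ — which is why it is cleanest to import those two inputs from \cite{drinfeld2013some} rather than rederive them. A secondary, purely geometric point is the existence of the global equivariant embedding into a smooth quotient stack; if one prefers to avoid it, the argument runs verbatim Zariski-locally and the local identifications glue, since each is independent of the auxiliary embedding (compare two embeddings inside their fibre product over $BG$).
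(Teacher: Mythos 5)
Your argument is correct and is essentially the paper's proof: embed $\X$ as a (necessarily quasi-smooth) closed substack of a smooth quotient stack, invoke the standard formulas $i^!(\bullet)\simeq i^\ast(\bullet)\otimes\det(\bL_i)[\rank\bL_i]$ for the regular embedding and $f^!(\bullet)\simeq f^\ast(\bullet)\otimes\det(\bL_f)[\rank\bL_f]$ for the smooth projection, and combine them via the cotangent-complex fiber sequence and multiplicativity of $\det$ and $\rank$. The only cosmetic difference is that the paper cites \cite[Part IV.4]{gaitsgory2015study} for both inputs and applies the smooth-morphism formula directly to $\Y\to\Spec k$, whereas you factor through $BG$ and handle $\omega_{BG}$ by hand — both are fine.
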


\begin{proof}
$\X$ admits a closed immersion into a smooth stack $i : \X \hookrightarrow \Y$. Any such closed immersion is a derived regular embedding, so we have $i^!(\bullet) \simeq i^\ast(\bullet) \otimes \det (\bL_{\X / \Y})[\rank \bL_{\X/\Y}]$ by translating \cite[Part IV.4, Corollary 14.3.2]{gaitsgory2015study} into our notation. Furthermore, in our notation \cite[Part IV.4, Proposition 14.3.4]{gaitsgory2015study} says that $f^!(\bullet) \simeq f^\ast(\bullet) \otimes \det(\bL_\Y)[\rank \bL_\Y]$ for the smooth morphism $f: \Y \to \Spec k$. Using the fiber sequence for the relative cotangent complex of a composition of morphisms, it follows that the formula $f^!(\bullet) \simeq f^\ast(\bullet) \otimes \det(\bL_f) [\rank \bL_f]$ is closed under composition of morphisms, hence it holds for $\X \to \Spec k$.
\end{proof}

The inclusion $\Perf(\X) \subset D^b\Coh(\X)$ defines a functor $\Xi_\X : \QC(\X) \to \icoh(\X)$ which is fully faithful and left adjoint to the tautological functor $\Psi_\X : \icoh(\X) \to \QC(\X)$ coming from the inclusion $D^b\Coh(\X) \subset \QC(\X)$ (see \cite[Section 1.5]{gaitsgory2013ind}). Regarding $\icoh(\X)$ as a module category for $\QC(\X)$, we have $\Xi_\X(F) \simeq F \otimes \Xi_\X(\cO_\X)$, where $\Xi_\X(\cO_\X)$ is just $\cO_\X$ regarded as an object of $D^b\Coh(\X)$. Note that $\Xi_\X$ provides a second way of identifying $D^b\Coh(\X)$ as a full subcategory of $\icoh(\X)$, which differs from the defining embedding as the compact objects. The fully faithful embedding $\Xi_\X : \Perf(\X) \to \icoh(\X)$ agrees with the inclusion $\Perf(\X) \subset D^b\Coh(\X) \subset \icoh(\X)$, however, so there is no danger of confusion when regarding a perfect complex as an object of $\QC(\X)$ or of $\icoh(\X)$.

The fact that $\bD_\X$ preserves $\Perf(\X) \subset D^b \Coh(\X)$ allows us to restrict it to a duality functor $\Perf(\X) \to \Perf(\X)^{op}$ which commutes with $\Xi_\X$ by construction and differs from the naive (i.e. linear) duality by tensoring with an invertible complex. This extends to an isomorphism $\QC(\X) \to \QC(\X)^\dual$, where the latter denotes the dual presentable stable $\infty$-category. This duality interchanges $D^-\Coh(\X)$ and $D^+\Coh(\X)$.

In our setting, $\S$ and $\X$ are both quasi-smooth, but the morphism $i : \S \hookrightarrow \X$ need not have finite Tor-dimension; this happens if and only if $\cO_\S$ is perfect as an $\cO_\X$-module. The morphism $j : \S \to \X$ need not be Gorenstein in the sense of derived algebraic geometry \cite{gaitsgory2013ind}, and the pullback functor $j^{\icoh,\ast} : \icoh(\X) \to \icoh(\S)$ need not be defined. Nevertheless we have

\begin{lem} \label{lem:relative_dualizing_complex}
Let $i : \S \hookrightarrow \X$ be a closed immersion of quasi-smooth quotient stacks, then there is an isomorphism $i^! \circ \Xi_\X(F) \simeq \Xi_\S ( i^\ast (F) \otimes \det \bL_{\S/\X})[\op{rank} \bL_{\S/\X}])$ which is functorial for $F \in \QC(\X)$.
\end{lem}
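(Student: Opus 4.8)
The plan is to reduce the claim to the Gorenstein/regular-embedding case by factoring $i$ through an ambient smooth stack, and then to transport the formula along the factorization exactly as in the proof of \autoref{lem:absolute_dualizing complex}. First I would recall that since $\X$ is a quasi-projective quasi-smooth quotient stack, it admits a closed immersion $a : \X \hookrightarrow \Y$ into a smooth quotient stack, which is automatically a derived regular embedding; and since $\S$ is also quasi-smooth and quasi-projective, the composite $a \circ i : \S \hookrightarrow \Y$ is a derived regular embedding as well. For a derived regular embedding $g : \cW \hookrightarrow \cV$, the functor $g^! : \icoh(\cV) \to \icoh(\cW)$ exists, is continuous, and satisfies $g^!(\bullet) \simeq g^{\icoh,\ast}(\bullet) \otimes \det(\bL_{\cW/\cV})[\rank \bL_{\cW/\cV}]$ by \cite[Part IV.4, Corollary 14.3.2]{gaitsgory2015study}. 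Applying this to $g = a$ and to $g = a \circ i$, and using the multiplicativity of $\det \bL$ and of $\rank$ along the fiber sequence $i^\ast \bL_{\X/\Y} \to \bL_{\S/\Y} \to \bL_{\S/\X}$, I would deduce a candidate formula for $i^!$ on the image of $\Xi$.

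The key point to nail down is the interaction of these $\icoh$-shriek pullbacks with the functors $\Xi$ and $\Psi$. On the smooth/regular-embedding side everything is Gorenstein, so $g^{\icoh,\ast}$ is defined and $\Xi_{\cV}$ intertwines $g^\ast$ (on $\QC$) with $g^{\icoh,\ast}$ up to tensoring by a line bundle; concretely $g^{\icoh,\ast}\Xi_\cV(F) \simeq \Xi_\cW(g^\ast F) \otimes (\text{invertible shift})$. Since $a$ and $a\circ i$ are regular embeddings, I can write $i^! \circ \Xi_\X(F)$ as $(a\circ i)^!\circ a_*^{?}\cdots$ — more cleanly, I would use that $i^! \circ a^! \simeq (a\circ i)^!$ as functors $\icoh(\Y) \to \icoh(\S)$, pick any $G \in \icoh(\Y)$ with $a^! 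G \simeq \Xi_\X(F)$ (possible since $a^!$ admits a section up to the twist, or simply because $a^!\Xi_\Y(-)$ hits $\Xi_\X$ of everything in the essential image of $a^*$, and any $F\in\QC(\X)$ is such since $a^*$ is essentially surjective onto... — here I must be slightly careful: instead I would argue directly). The cleanest route: write $F \simeq a^* H$ for some $H \in \QC(\Y)$ is \emph{not} available in general, so instead I would use base change. Let $H := a_* F$ (pushforward in $\QC$); then $a^\ast a_* F$ has a Koszul filtration with graded pieces $F \otimes \wedge^j \bL_{\X/\Y}[j]$, and more to the point $\Xi$ is $\QC(\X)$-linear, so $i^!\Xi_\X(F) \simeq i^!(\Xi_\X(\cO_\X)\otimes F) \simeq (i^!\Xi_\X(\cO_\X))\otimes i^* F$ by the projection formula for $i^!$ against $\QC$ (valid because $\icoh(\S)$ is a $\QC(\S)$-module and $i^!$ is lax $\QC(\X)$-linear via $i^*$). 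This reduces the whole statement to the single computation $i^!\Xi_\X(\cO_\X) \simeq \Xi_\S(\det\bL_{\S/\X})[\rank\bL_{\S/\X}]$, which I then extract from the regular-embedding formulas for $a$ and $a\circ i$ together with \autoref{lem:absolute_dualizing complex} applied to $\X$ and $\S$ (so that $\omega_\X = a^!(\omega_\Y)$ etc. are identified explicitly), and functoriality in $F$ is automatic from the $\QC(\X)$-linearity.

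The main obstacle I anticipate is precisely the bookkeeping around the projection formula $i^!(M \otimes F) \simeq i^!(M) \otimes i^*(F)$ for $M \in \icoh(\X)$, $F \in \QC(\X)$: $i$ is not Gorenstein, $i^{\icoh,\ast}$ need not exist, and $i^!$ is only a priori lax monoidal over $\QC(\X)$ acting through $i^*$. I would justify the required (strict) projection formula either by reducing along the regular embeddings $a$ and $a\circ i$ — where it is standard — and using $i^! a^! \simeq (ai)^!$, or by citing the module-category formalism of \cite{gaitsgory2015study} directly; this is the step where one has to be careful that the isomorphism is natural in $F$ and not just pointwise. Everything else (multiplicativity of $\det \bL$ and $\rank$ along the cotangent fiber sequence, compatibility of $\Xi$ with $\otimes$, and the identification of $\bL_{\S/\X}$ as a shifted perfect complex via \autoref{lem:relative_cotangent_complex}) is routine.
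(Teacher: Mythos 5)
Your proposal is correct, but the core computation takes a genuinely different route from the paper. Both proofs make the same first move: since $i^!$, $\Xi_\S$, $\Xi_\X$ are functors of $\QC(\X)$-module categories, the statement reduces to the single object $F = \cO_\X$ (the paper asserts this linearity in one sentence; you rightly flag it as the delicate point, and of your two proposed justifications the citation of the module-category/rigidity formalism is the one that works --- the idea of deducing the projection formula for $i^!$ against arbitrary $F \in \QC(\X)$ by ``reducing along the regular embeddings'' does not obviously go through, since a general $F$ on $\X$ is not pulled back from $\Y$; but as the paper relies on the same fact, this is not a gap). After that the paths diverge: the paper computes $i^!\cO_\X$ intrinsically on $\S \hookrightarrow \X$ by a Serre-duality argument, using $\bD_\X \circ i_\ast \simeq i_\ast \circ \bD_\S$, the adjunctions $i^\ast \dashv i_\ast \dashv i^!$ and $(\Xi_\X,\Psi_\X)$, and the compatibility of $\Psi_\X$ with pushforward of bounded coherent complexes, arriving at $i^!\cO_\X \simeq \Xi_\S(\omega_\S \otimes i^\ast \omega_\X^\dual)$ and then invoking \autoref{lem:absolute_dualizing complex}. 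You instead factor $i$ through the ambient smooth stack, $\S \hookrightarrow \X \xrightarrow{a} \Y$, observe that $a$ and $a \circ i$ are derived regular embeddings, apply \cite[Part IV.4, Corollary 14.3.2]{gaitsgory2015study} to each, and use $(a\circ i)^! \simeq i^! \circ a^!$ together with multiplicativity of $\det$ and $\rank$ along the fiber sequence $i^\ast \bL_{\X/\Y} \to \bL_{\S/\Y} \to \bL_{\S/\X}$; equivalently, your route amounts to the identity $i^!\omega_\X \simeq \omega_\S$ from composability of shriek-pullbacks plus \autoref{lem:absolute_dualizing complex} on both $\S$ and $\X$. What each buys: your argument avoids the duality-functor and $\Psi_\X$-compatibility bookkeeping, at the price of needing the smooth ambient embedding for $\S$ as well (available here, and already used in \autoref{lem:absolute_dualizing complex}) and the intertwining $g^{\icoh,\ast}\Xi \simeq \Xi\, g^\ast$ for eventually coconnective $g$; the paper's argument is intrinsic to $i : \S \hookrightarrow \X$ and exercises exactly the $\bD_\X$/$\bD_\S$ machinery reused later in \autoref{lem:duality}.
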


\begin{proof}
The fact that $i^!$ is a functor of $\QC(\X)$-module categories, as are $\Xi_\S$ and $\Xi_\X$, we need only prove the claim for $F = \cO_\X$. We make use of the formula $\bD_{\X} \circ i_\ast \simeq i_\ast \circ \bD_{\S}$, as functors from $D^b\Coh(\S) \to D^b\Coh(\X)$ (the proof of \cite[Corollary 9.5.9]{gaitsgory2013ind} in the case of schemes works verbatim). For $F \in D^b\Coh(\S)$, we have
\begin{align*}
\RHom_{\icoh(\X)}(i_\ast F , \cO_\X) &\simeq \RHom_{\icoh(\X)}(\omega_\X,i_\ast \bD_\S(F)) \\
&\simeq \RHom_{\QC(\X)}(\omega_\X,\Psi_\X(i_\ast \bD_\S(F)))
\end{align*}
In a slight abuse of notation we have regarded $\omega_\X \in D^b\Coh(\X)$ as an object of $\QC(\X)$ as well, implicitly using the fact that the functor $\Xi_\X$ is fully faithful. $\Psi_\X$ commutes with pushforwards of bounded coherent complexes, so we can identify the latter with $\RHom_{\icoh(\X)}(\Xi_\X(i^\ast \omega_\X),\bD_\S(F))$, which we can in turn identify with $\RHom_{\icoh(\X)}(F,\Xi_\S(\bD_\S(i^\ast \omega_\X)))$. This equivalence is functorial in $F \in D^b\Coh(\X)$, and thus because $i^!$ is right adjoint to $i_\ast$ we have $i^! \cO_\X \simeq \Xi_\S(\bD_\S(i^\ast \omega_\X)) \simeq \Xi_\S(\omega_\S \otimes i^\ast \omega_X^\dual)$. The result now follows from \autoref{lem:absolute_dualizing complex}.

\end{proof}

\subsection{Proof of \autoref{thm:derived_Kirwan_surjectivity_quasi-smooth}}

Throughout this subsection, we will assume the hypothesis of \autoref{thm:derived_Kirwan_surjectivity_quasi-smooth} holds. This implies that $\bL_{\Z/\S} \simeq (\sigma^\ast \bL_\S)^{<0}[1] \simeq (\sigma^\ast \bL_\X)^{<0}[1]$ is perfect with fiber homology in degrees $1,0,-1$. In particular it implies that $\sigma : \Z \to \S$ has finite Tor amplitude.

\begin{lem} \label{lem:quasi_smooth_baric_decomp}
The baric truncation functors of \autoref{lem:baric_decomp} and \autoref{lem:baric_decomposition_supports} induce bounded baric decompositions of $D^b \Coh(\S)$ and $D^b\Coh_\S(\X)$.
\end{lem}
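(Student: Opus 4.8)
The plan is to verify the two features that a \emph{bounded} baric decomposition on $D^b\Coh(\S)$ (resp.\ $D^b\Coh_\S(\X)$) has over the one already constructed on $D^-\Coh(\S)$ (resp.\ $D^-\Coh_\S(\X)$): first, that the truncation functors $\radj{w}$ and $\ladj{w}$ carry $D^b\Coh$ to itself, so that the baric triangle $\radj{w}F\to F\to\ladj{w}F$ lies in $D^b\Coh$ and hence cuts out a semiorthogonal decomposition of it (the nesting axioms being inherited from $D^-$); and second, that for a fixed object one has $\radj{w}F=0$ for $w\gg 0$ and $\ladj{w}F=0$ for $w\ll 0$.

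The first point is formal and does not use quasi-smoothness. The functors $\radj{w}$ and $\ladj{w}$ preserve connective objects: for $D^-\Coh_\S(\X)$ this is observed in the proof of \autoref{lem:baric_decomposition_supports}, and for $D^-\Coh(\S)$ it follows from the construction in \autoref{lem:baric_decomp}, since when $F$ is connective one may choose the presentation $\cA_\lambda\otimes E_\bullet$ with $E_\bullet$ connective, whence $\cA_\lambda\otimes (E_\bullet)^{\geq w}$ and $\cA_\lambda\otimes (E_\bullet)^{<w}$ are connective. As $\radj{w}$ and $\ladj{w}$ are also endofunctors of $D^-\Coh(\S)$ and $D^-\Coh_\S(\X)$, and an object of $D^-\Coh$ that is connective after a shift lies in $D^b\Coh$, it follows that $\radj{w}$ and $\ladj{w}$ preserve $D^b\Coh(\S)$ and $D^b\Coh_\S(\X)$.

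For boundedness on $D^b\Coh(\S)$ the key input is that $\cB_\lambda$ is concentrated in $\lambda$-weight $0$ by construction: it is the quotient of $\cO_Z\otimes_{\cO_Y}\cA_\lambda$, all of whose weights are $\leq 0$, by its $L$-equivariant differential ideal $(\cO_Z\otimes_{\cO_Y}\cA_\lambda)_{<0}$ of negative weights. Hence $\QC(\Z)$ decomposes as a direct sum over $\lambda$-weights and every object of $D^b\Coh(\Z)$ has $\lambda$-weights in a finite interval. Under the running hypothesis, $\sigma:\Z\to\S$ has finite Tor amplitude (as noted at the beginning of this subsection), so $\sigma^\ast$ carries $D^b\Coh(\S)$ into $D^b\Coh(\Z)$. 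Thus for $F\in D^b\Coh(\S)$ there are integers $a\leq b$ with $\sigma^\ast F\in D^b\Coh(\Z)^{\geq a}\cap D^b\Coh(\Z)^{<b+1}$, and the alternative descriptions of \autoref{lem:baric_decomp} give $F\in D^-\Coh(\S)^{\geq a}\cap D^-\Coh(\S)^{<b+1}$; the nesting axioms then force $\ladj{w}F=0$ for $w\leq a$ and $\radj{w}F=0$ for $w\geq b+1$, so the decomposition of $D^b\Coh(\S)$ is bounded.

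Finally, for $D^b\Coh_\S(\X)$ we bootstrap from this. The category $D^b\Coh_\S(\X)$ is generated as a thick triangulated subcategory by $i_\ast\bigl(D^b\Coh(\S)\bigr)$, since every coherent sheaf supported on $\S^{cl}$ carries a finite filtration by powers of the ideal of $\S^{cl}$ whose graded pieces are pushed forward from $\S$. Moreover $i_\ast$ carries $D^-\Coh(\S)^{\geq w}$ into $D^-\Coh_\S(\X)^{\geq w}$ — as in the proof of \autoref{lem:baric_decomposition_supports}, using that $i^\ast i_\ast$ has associated graded $\op{Sym}(\bL_{\S/\X})\otimes(-)$ with $\bL_{\S/\X}\in D^-\Coh(\S)^{\geq 1}$ — and, dually, carries $D^-\Coh(\S)^{<w}$ into $D^-\Coh_\S(\X)^{<w}$, via the filtration of $i^\Qshriek i_\ast$ whose graded is $\op{Sym}(\bL_{\S/\X})^\vee\otimes(-)$, of weights $\leq 0$. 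Combining with the previous paragraph, $i_\ast G\in D^-\Coh_\S(\X)^{\geq a_G}\cap D^-\Coh_\S(\X)^{<b_G+1}$ for suitable $a_G\leq b_G$ and every $G\in D^b\Coh(\S)$; since any object of $D^b\Coh_\S(\X)$ is obtained from finitely many such $i_\ast G$ by shifts, cones, and retracts, and these subcategories are stable and nested, every object of $D^b\Coh_\S(\X)$ lands in $D^-\Coh_\S(\X)^{\geq a}\cap D^-\Coh_\S(\X)^{<b+1}$ for some $a\leq b$. The main obstacle is organizational — marshalling the weight-concentration of $\cB_\lambda$, the finite Tor amplitude of $\sigma$, and the compatibility of $i_\ast$ with the baric filtrations; the one genuinely delicate point is that $i$ itself need not have finite Tor amplitude, so the $<w$ half of the last step must be handled through $i^\Qshriek i_\ast$ (or through homology sheaves and \autoref{rem:truncating_category}) rather than through $i^\ast i_\ast$.
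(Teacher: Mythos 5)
Your proposal is correct, and its second half (boundedness) runs along the same lines as the paper: finite Tor amplitude of $\sigma$ (from \autoref{lem:finite_tor_amplitude}) puts $\sigma^\ast F$ in $D^b\Coh(\Z)$, weight-concentration of $\cB_\lambda$ gives a finite weight window there, the alternative description in \autoref{lem:baric_decomp} transfers the window to $F$, and the case of $D^b\Coh_\S(\X)$ is bootstrapped from generation by $i_\ast D^b\Coh(\S)$ together with the compatibility of $i_\ast$ with the two baric pieces (which is already built into \autoref{lem:baric_decomposition_supports}; your worry about $i$ not having finite Tor amplitude is well placed, and your fallback via homology sheaves and \autoref{rem:truncating_category} is the sound way to handle the $<w$ side, so the extra co-filtration of $i^\Qshriek i_\ast$ is not really needed). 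Where you genuinely diverge is the first half: you obtain preservation of $D^b\Coh$ by the truncation functors formally, from the fact that $\radj{w}$ and $\ladj{w}$ are endofunctors of $D^-\Coh$ which preserve connective objects (choosing a connective semi-free presentation $\cA_\lambda\otimes E_\bullet$), so that quasi-smoothness and the $H_1$-vanishing hypothesis are not used at this step at all; the paper instead deduces preservation from the finite weight decomposition itself, using that $\sigma^\ast: D^-\Coh(\S)^w\to D^-\Coh(\Z)^w$ is an equivalence with inverse $\pi^\ast$. Your route isolates more sharply what the hypothesis is actually for -- only the boundedness (finitely many nonzero weight truncations per object) can fail without it, as the paper's counterexample to boundedness of the window functors illustrates -- while the paper's route simultaneously produces the identification of the weight-$w$ pieces with $D^b\Coh(\Z)^w$ via $\pi^\ast$, which is reused in the proof of \autoref{thm:derived_Kirwan_surjectivity_quasi-smooth}. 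Both arguments are valid.
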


\begin{proof}
Because $\sigma$ has finite Tor-amplitude, $\sigma^\ast F \in D^b\Coh(\Z)$ for any $F \in D^b\Coh(\S)$. Thus $\sigma^\ast F$ decomposes as a direct sum of objects in $D^b\Coh(\Z)^w$ for finitely many $w$. By Nakayama's lemma and the fact that $\sigma^\ast$ is compatible with the baric truncation functors on $D^-\Coh(\S)$ and $D^-\Coh(\Z)$, it follows that $F$ decomposes under the baric decomposition of $D^- \Coh(\S)$ as an iterated extension of objects in $D^-\Coh(\S)^w$ for finitely many $w$. Furthermore these objects lie in $D^b\Coh(\S)^w$, because $\sigma^\ast : D^- \Coh(\S)^w \to D^-\Coh(\Z)^w$ is an equivalence with inverse given by $\pi^\ast$. It follows that the baric truncation functors preserve $D^b\Coh(\S)$.

The boundedness of the baric decomposition of $D^b\Coh(\S)$ follows again from Nakyama's lemma, the compatibility of $\sigma^\ast$ with the baric decomposition on $D^b\Coh(\Z)$, and the boundedness of the latter. Once one has the baric decomposition of $D^b\Coh(\S)$, the fact that $\radj{w}$ and $\ladj{w}$ preserve $D^b\Coh_\S(\X)$ follows from the fact that $D^b\Coh_\S(\X)$ is generated by $i_\ast D^b\Coh(\S)$ under shifts and cones, and the boundedness of the baric decomposition of $D^b\Coh_\S(\X)$ follows likewise.
\end{proof}

\begin{lem} \label{lem:duality}
Let $a$ be the $\lambda$-weight of $\omega_\S|_\Z$. Then we have
\begin{enumerate}
\item $\bD_\X(D^b\Coh(\X)^{\geq w}) = D^b\Coh(\X)^{<a+1-w}$
\item $\bD_\X(D^b\Coh(\X)^{< w}) = D^b\Coh(\X)^{\geq a+1-w}$  
\end{enumerate}
\end{lem}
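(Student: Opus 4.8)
The two displayed equalities are equivalent: applying $\bD_\X$ to one and using $\bD_\X^2\simeq\id$ together with the contravariance of $\bD_\X$ (and replacing $w$ by $a+1-w$) yields the other. So it suffices to prove that for every $F\in D^b\Coh(\X)$ and every $v\in\bZ$ one has $\bD_\X F\in D^b\Coh(\X)^{<v}$ if and only if $F\in D^b\Coh(\X)^{\geq a+1-v}$; taking $v=a+1-w$ then gives part (1), and part (2) follows. Unwinding \autoref{defn:general_categories}, this amounts to a chain of equivalences of weight conditions, first on $\S$ (via $i^\ast$ and $i^\Qshriek$) and then on $\Z$ (via $\sigma^\ast$), which I would set up from the ingredients below.

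\emph{Duality exchanges $i^\ast$ and $i^\Qshriek$.} For $F\in D^b\Coh(\X)$ there is a natural isomorphism $i^\Qshriek(\bD_\X F)\simeq\bD_\S(i^\ast F)$ in $\QC(\S)$, and, substituting $\bD_\X F$ for $F$ and applying $\bD_\S$, also $i^\ast(\bD_\X F)\simeq\bD_\S(i^\Qshriek F)$. This comes from $\bD_\X\circ i_\ast\simeq i_\ast\circ\bD_\S$ (Serre duality commutes with the proper morphism $i_\ast$, exactly as in the proof of \autoref{lem:relative_dualizing_complex}) by a Yoneda argument: for $M\in\Perf(\S)$,
\begin{align*}
\RHom_\S(M,i^\Qshriek\bD_\X F)
&\simeq\RHom_\X(i_\ast M,\bD_\X F)
\simeq\RHom_\X(F,\bD_\X i_\ast M)\\
&\simeq\RHom_\X(F,i_\ast\bD_\S M)
\simeq\RHom_\S(i^\ast F,\bD_\S M)
\simeq\RHom_\S(M,\bD_\S i^\ast F),
\end{align*}
using $i_\ast\dashv i^\Qshriek$, $i^\ast\dashv i_\ast$, the symmetry $\RHom(A,\bD B)\simeq\RHom(B,\bD A)$ (tensor--hom adjunction, $\omega_\S$ being invertible since $\S$ is quasi-smooth), and that $\Perf(\S)$ compactly generates $\QC(\S)$. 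Since $i^\ast,i^\Qshriek$ need not preserve $D^b\Coh$ and $i^\Qshriek$ need not be continuous, this is cleanest carried out in the ind-coherent categories via $\Xi_\S,\Xi_\X,\Psi_\S,\Psi_\X$, as in \autoref{lem:relative_dualizing_complex}.

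\emph{Passing to $\Z$ and computing weights.} The subcategories $\QC(\S)^{\geq w},\QC(\S)^{<w}$ are detected by $\sigma^\ast$ on coherent complexes: for $G\in D^-\Coh(\S)$ this is \autoref{lem:baric_decomp}, and for $G\in D^+\Coh(\S)$ it follows because the baric truncations preserve $D^{\pm}\Coh(\S)$ with $\sigma^\ast\radj{w}G\simeq(\sigma^\ast G)^{\geq w}$ and $\sigma^\ast$ is conservative on coherent complexes (if $\sigma^\ast G=0$, Nakayama applied to an extremal nonvanishing homology sheaf of $G$ shows its closed support meets $\Z$ trivially, hence is empty, since the only open substack of $\S$ through which $\sigma$ factors is $\S$ itself, as in the proof of \autoref{lem:baric_decomp}). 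Hence, since $i^\ast F\in D^-\Coh(\S)$ and $i^\Qshriek F\in D^+\Coh(\S)$, the first ingredient gives $\bD_\X F\in D^b\Coh(\X)^{<v}\iff\bD_\S(i^\ast F)\in\QC(\S)^{<v}\iff\sigma^\ast\bD_\S(i^\ast F)\in\QC(\Z)^{<v}$, while $F\in D^b\Coh(\X)^{\geq a+1-v}\iff\sigma^\ast i^\ast F\in\QC(\Z)^{\geq a+1-v}$. Now $\sigma$ has finite Tor-dimension and $\bL_{\Z/\S}$ is perfect (\autoref{lem:finite_tor_amplitude}), so $\omega_{\Z/\S}:=\det\bL_{\Z/\S}[\rank\bL_{\Z/\S}]$ is invertible; \autoref{lem:relative_dualizing_complex} applied to $\sigma:\Z\hookrightarrow\S$ gives $\omega_\Z\simeq\sigma^!\omega_\S\simeq\sigma^\ast\omega_\S\otimes\omega_{\Z/\S}$, and with the base-change identity $\sigma^!\circ\bD_\S\simeq\bD_\Z\circ\sigma^\ast$ this yields $\sigma^\ast\bD_\S(G)\simeq\bD_\Z(\sigma^\ast G)\otimes\omega_{\Z/\S}^\dual$. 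Finally $\omega_\Z$ is an invertible sheaf (\autoref{lem:absolute_dualizing complex}); since $Z$ is the $\lambda$-fixed locus, $\cO_\Z$ has $\lambda$-weight $0$, coherent complexes on $\Z$ split into coherent weight summands, and $\omega_\Z$ has a single $\lambda$-weight $b=w(\sigma^\ast\omega_\S)+w(\omega_{\Z/\S})=a+w(\omega_{\Z/\S})$; so $\bD_\Z=\inner{\RHom}_{\cO_\Z}(-,\cO_\Z)\otimes\omega_\Z$ negates weights and then shifts by $b$, giving, for $A\in D^{\pm}\Coh(\Z)$, the equivalence $\bD_\Z A\in\QC(\Z)^{<u}\iff A\in\QC(\Z)^{\geq b+1-u}$ (the nontrivial implication using that $\bD_\Z$ is an equivalence, hence conservative, on coherent complexes). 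Combining these three facts (tensoring by $\omega_{\Z/\S}^\dual$ shifts the baric degree by $-w(\omega_{\Z/\S})$, then applying the $\bD_\Z$-equivalence with $u=v+w(\omega_{\Z/\S})$, then $b-w(\omega_{\Z/\S})=a$):
\[
\sigma^\ast\bD_\S(i^\ast F)\in\QC(\Z)^{<v}\iff\sigma^\ast i^\ast F\in\QC(\Z)^{\geq a+1-v},
\]
which is precisely the condition $F\in D^b\Coh(\X)^{\geq a+1-v}$. Taking $v=a+1-w$ gives (1).

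The main obstacle is the duality-exchange step: because $i^\ast$ and $i^\Qshriek$ fail to preserve $D^b\Coh$ and $i^\Qshriek$ fails to be continuous, while Serre duality is most naturally available on $\icoh$, one must carry out the adjunction bookkeeping through $\Xi,\Psi$ as in \autoref{lem:relative_dualizing_complex} rather than naively in $\QC$. The rest is routine once one is careful with the determinant-of-a-shifted-complex conventions, which is exactly what makes the two $\omega_{\Z/\S}$-twists cancel so that only $a=w(\omega_\S|_\Z)$ remains in the final shift.
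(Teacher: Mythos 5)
Your proposal is correct and takes essentially the same route as the paper's proof: both reduce the statement via the adjunctions and the commutation $\bD_\X i_\ast \simeq i_\ast \bD_\S$ to the fact that $\bD_\S$ flips the weight categories on $\S$, and both verify that flip on $\Z$ using finite Tor amplitude of $\sigma$ and the weight $a$ of $\omega_\S|_\Z$ (your identity $\sigma^\ast \bD_\S(G) \simeq \bD_\Z(\sigma^\ast G)\otimes \omega_{\Z/\S}^\dual$ is the paper's $\sigma^\ast \bD_\S(G) \simeq \inner{\RHom}_{\QC(\Z)}(\sigma^\ast G, \sigma^\ast\omega_\S)$ in disguise). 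The only packaging difference is that the paper tests membership by orthogonality against the bounded objects $i_\ast P$, $P \in D^b\Coh(\S)^{\geq w}$, and so only ever handles the $D^-$ object $i^\ast\bD_\X F$ (using generation under limits from \autoref{rem:homology}), whereas your functorial isomorphism $i^\Qshriek\bD_\X \simeq \bD_\S i^\ast$ forces you to detect the baric condition for the $D^+$ object $\bD_\S(i^\ast F)$ via $\sigma^\ast$ --- a step you only sketch, but which is completable from \autoref{rem:homology}, Nakayama, and the finite Tor amplitude of $\sigma$.
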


\begin{proof}
(1) and (2) are equivalent by application of $\bD_\X$. The fact that $\sigma : \Z \to \S$ has finite Tor amplitude implies that $\sigma^\ast \bD_\S(F) \simeq \inner{\RHom}_{\QC(\Z)}(\sigma^\ast F, \sigma^\ast \omega_\S)$. Indeed, $\bD_\S(F)$ can be approximated in high homological degree by $\bD_\S(P)$ for a perfect complex $P$, and in this case the identity follows from the dualizability of $P$. It follows from \autoref{lem:baric_decomp} that $\bD_\S$ flips $D^b\Coh(\S)^{\geq w}$ and $D^b\Coh(\S)^{<a+1-w}$.

$F\in D^b\Coh(\X)$ lies in $D^b\Coh(\X)^{<w}$ if and only if $\RHom_{\QC(\X)}(i_\ast P,F) = 0$ for all $P \in D^b\Coh(\S)^{\geq w}$, because $\Perf(\S)^{\geq w} \subset D^b\Coh(\S)^{\geq w}$. By Serre duality and the fact that $i_\ast \bD_\S \simeq \bD_\X i_\ast$, this is equivalent to $\RHom_{\QC(\S)}(i^\ast \bD_\X(F), \bD_\S(P)) = 0$ for all $P \in D^b\Coh(\S)^{\geq w}$. Using the fact that $D^- \Coh(\S)^{<a+1-w}$ is generated under limits by $D^b\Coh(\S)^{<a+1-w} = \bD_\S(D^b\Coh(\S)^{\geq w})$ (see \autoref{rem:homology}), this is equivalent to $i^\ast \bD_\X(F) \in D^- \Coh(\S)^{\geq a+1-w}$, or $\bD_\X(F) \in D^b\Coh(\X)^{\geq a+1-w}$.
\end{proof}

\begin{rem} \label{rem:alternate_characterization}
This lemma shows that in addition to the description
$$D^b\Coh(\X)^{\geq w} = \left\{ F \in D^b\Coh(\X) \left| \sigma^\ast i^\ast F \in D^-\Coh(\Z)_{w} \right.\right\},$$
following immediately from the definitions and \autoref{lem:baric_decomp}, we have an alternate characterization
$$D^b\Coh(\X)^{< w} = \left\{ F \in D^b\Coh(\X) \left| \sigma^\ast i^\ast \bD_\X F \in D^-\Coh(\Z)_{\geq a+1- w} \right.\right\}.$$
This description is convenient because it emphasizes the symmetry of the conditions defining $\cG^b_w$.
\end{rem}

\begin{lem} \label{lem:bounded_SOD}
$D^b \Coh (\X) = \bigcup_{v,w} \left( D^b \Coh(\X)^{<w} \cap D^b\Coh(\X)^{\geq v} \right)$.
\end{lem}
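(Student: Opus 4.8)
The plan is to use the characterizations of $D^b\Coh(\X)^{\geq v}$ and $D^b\Coh(\X)^{<w}$ established above to reduce the whole statement to a single weight–boundedness assertion about restriction to $\Z$, which can then be checked in the explicit presentation.

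First I would set up the reduction. For $F\in D^b\Coh(\X)$ the pullback $i^\ast F$ lies in $D^-\Coh(\S)$, so by \autoref{defn:general_categories} together with the alternative description of the baric decomposition in \autoref{lem:baric_decomp}, $F\in D^b\Coh(\X)^{\geq v}$ precisely when $\sigma^\ast i^\ast F$ has $\lambda$-weights $\geq v$; such a $v$ exists exactly when $\sigma^\ast i^\ast F$ has $\lambda$-weights bounded below. On the other side, \autoref{lem:duality} gives $D^b\Coh(\X)^{<w}=\bD_\X\bigl(D^b\Coh(\X)^{\geq a+1-w}\bigr)$, and $\bD_\X$ preserves $D^b\Coh(\X)$, so $F$ lies in $D^b\Coh(\X)^{<w}$ for some $w$ exactly when $\sigma^\ast i^\ast(\bD_\X F)$ has $\lambda$-weights bounded below (cf.\ \autoref{rem:alternate_characterization}). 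So, after shrinking $v$ or enlarging $w$ to arrange $v\le w$, it is enough to show that for every $G\in D^b\Coh(\X)$ the complex $\sigma^\ast i^\ast G\in D^-\Coh(\Z)$ has $\lambda$-weights bounded below, and then apply this to $G=F$ and to $G=\bD_\X F$.

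To prove that, I would compute $\sigma^\ast i^\ast G$ in the fixed presentation, which for quasi-smooth $\X$ may be taken (near $Z$, which is all that matters) to be a Koszul-type complex $\cA=\cO_X[U_1;d]$, finite over $\cO_X$. The map $\sigma\circ i:\Z\to\X$ factors through $\cA\to\cA_Z:=\cA|_Z\to\cB_\lambda$, where $\cA_Z=\cO_Z\otimes_{\cO_X}\cA$ is the restriction to the smooth fixed locus $Z\subset X$, so $\sigma^\ast i^\ast G\simeq\cB_\lambda\otimes_{\cA_Z}\bigl(\cA_Z\otimes_{\cA}G\bigr)$. The inner factor $\cA_Z\otimes_{\cA}G\simeq\cO_Z\otimes_{\cO_X}G$ is the derived restriction of $G$ to $Z$; since $Z\hookrightarrow X$ is a regular embedding (the fixed locus of a reductive group on a smooth scheme is smooth) it is a bounded complex with coherent homology, hence has only finitely many $\lambda$-weights. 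It therefore suffices to show that $\cB_\lambda\otimes_{\cA_Z}(-)$ can only enlarge the set of $\lambda$-weights by a non-negative amount. For this, comparing the presentations $\cA_Z=\cO_Z[(U_1|_Z);d]$ and $\cB_\lambda=\cO_Z[(U_1|_Z)_0;d]$ yields an identification $\cA_Z\simeq\cB_\lambda\otimes_{\cO_Z}\Lambda_{\cO_Z}\bigl((U_1|_Z)_{\neq0}\bigr)$ of $\cB_\lambda$-algebras, with zero differential on the exterior factor since the relations of non-zero weight restrict to $0$ on $Z$ ($\lambda$ acts trivially there); hence a resolution of $\cB_\lambda$ over $\cA_Z$ is obtained by adjoining the Koszul-dual polynomial generators, whose weights are non-negative combinations of the weights of $(U_1|_Z)_{\neq0}$. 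Provided those weights are positive, $\sigma^\ast i^\ast G$ has $\lambda$-weights bounded below by the minimal weight of $\cO_Z\otimes_{\cO_X}G$, as wanted.

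The hard part is exactly the claim, used above, that the generators $(U_1|_Z)_{\neq0}$ all have positive $\lambda$-weight — equivalently that $\pi:\S\to\Z$ is smooth in the quotient presentation — and this is where the hypothesis of \autoref{thm:derived_Kirwan_surjectivity_quasi-smooth} is needed. By \autoref{lem:relative_cotangent_complex} and the computation of $\bL_\Z$ underlying \autoref{lem:finite_tor_amplitude} one has $\sigma^\ast\bL_{\Z/\S}\simeq(\sigma^\ast\bL_\X)^{<0}[1]$, and a degree-$1$ negative-weight class there is precisely (the dual of) a relation of negative $\lambda$-weight near $Z$; the assumption $H_1\bigl(f^\ast((\bL_\X|_\Z)^{<0})\bigr)=0$ makes the negative-weight part of $H_1(\bL_\X|_\Z)$ vanish, and any remaining negative-weight relation then has a nonzero derivative transverse to $Z$ and can be used to trim the semifree presentation, so that no relation of negative weight survives. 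Carrying this out carefully — at the level of the derived stacks $\S\subset\X$ and $\Z$ rather than of affine charts, and for a general reductive $G$ (where $Z$ is a union of components of $X^\lambda$, $\cA_\lambda$ is built over $Y$ with $S/G\simeq Y/P$, and the requisite exactness statements are imported from the classical theory of \cite{halpern2012derived}) — is the main technical obstacle; once it is in place everything else in the argument is formal.
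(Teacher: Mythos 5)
Your reduction is the same as the paper's: by \autoref{lem:duality} (cf.\ \autoref{rem:alternate_characterization}) it suffices to show that for every $G \in D^b\Coh(\X)$ the $\lambda$-weights of $\sigma^\ast i^\ast G$ are bounded below, applied to $G=F$ and $G=\bD_\X F$. The problem is how you prove that weight bound. You route everything through a special equivariant presentation: a Koszul-type presentation $\cA=\cO_X[U_1;d]$ near $Z$, which you then want to ``trim'' so that no degree-one generator of negative $\lambda$-weight survives, using the hypothesis $H_1(f^\ast((\bL_\X|_\Z)^{<0}))=0$. That trimming step is exactly the crux --- it is the only place the hypothesis of \autoref{thm:derived_Kirwan_surjectivity_quasi-smooth} is consumed --- and you do not prove it; you explicitly defer it as ``the main technical obstacle.'' The hypothesis only says that $(U_1|_Z)_{<0}\to(\Omega^1_X|_Z)_{<0}$ is injective on fibers along $Z$; promoting this to an equivariant re-presentation of $\X$ near $Z$ with no negative-weight degree-one generators (and checking that the stratum data and the categories $D^b\Coh(\X)^{\geq v}$, $D^b\Coh(\X)^{<w}$ are unaffected by changing the presentation, which the paper fixes once and for all) is a substantive argument, not a formality. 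Two further unproved assertions sit underneath it: that an equivariant Koszul-type presentation exists Zariski-locally near $Z$ at all (the paper's presentation has generators $U_i$ in all homological degrees), and your parenthetical that the needed statement is ``equivalently that $\pi\colon\S\to\Z$ is smooth,'' which is not accurate --- the hypothesis still permits a nonzero degree $-1$, negative-weight part of $\bL_\X|_\Z$ (e.g.\ coming from the $\fg^\dual$ direction), so $\bL_{\S/\Z}$ need not be concentrated in degree $0$.

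The gap is also avoidable, which is how the paper proceeds: the hypothesis needs to enter only through the finite Tor-amplitude of $\sigma$ (the standing consequence recorded after \autoref{lem:finite_tor_amplitude}), which yields the bounded baric decomposition of $D^b\Coh_\S(\X)$ in \autoref{lem:quasi_smooth_baric_decomp}. Then, for the Koszul system $K_n$, the object $K_n\otimes F$ lies in $D^b\Coh_\S(\X)$, hence in $D^b\Coh_\S(\X)^{\geq w}$ for some $w$, so the weights of $\sigma^\ast i^\ast(K_n\otimes F)\simeq \sigma^\ast i^\ast(K_n)\otimes\sigma^\ast i^\ast F$ are bounded below; since $\cO_\Z$ is a direct summand of $K_n|_\Z$ (the complementary summand having nonzero weights), the weights of $\sigma^\ast i^\ast F$ itself are bounded below, and one concludes with \autoref{lem:duality} exactly as you intended. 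No presentation surgery, and no use of the hypothesis beyond what \autoref{lem:quasi_smooth_baric_decomp} already encodes, is required.
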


\begin{proof}
This is proved in essentially the same way as \cite[Lemma 3.36]{halpern2012derived} and the lemmas leading up to it.  In particular \autoref{lem:quasi_smooth_baric_decomp} implies that $K_n \otimes F \in D^b\Coh_\S (\X)^{\geq w}$ for some $w$, and so the weights of $\sigma^\ast i^\ast (K_n \otimes F) \simeq \sigma^\ast i^\ast(K_n) \otimes \sigma^\ast i^\ast F$ are bounded below. Because $K_n|_\Z \to \cO_\Z$ has negative weights, $\cO_Z$ is a summand of $K_n|_\Z$, and thus the weights of $\sigma^\ast i^\ast F$ are below as well. The claim now follows from \autoref{lem:duality}.
\end{proof}

\begin{proof} [Proof of \autoref{thm:derived_Kirwan_surjectivity_quasi-smooth}]
In order to get the semiorthogonal decomposition of \autoref{eqn:main_SOD_quasi_smooth} one must show that $\rgs{\geq w}$ and $\rgs{<w}$, the projection functors of \autoref{thm:derived_Kirwan_surjectivity_full}, preserve bounded objects. \autoref{lem:bounded_SOD} implies that for any $F \in D^b\Coh(\X)$ and $w \in \bZ$, the object $\op{Cone}(K_n \otimes F \to F) \in D^b\Coh(\X)^{<w}$ for $n \gg 0$, which implies that $\rgs{\geq w} F \simeq \radj{w} (K_n \otimes F)$. Likewise $\rgs{<w} F \simeq \ladj{w} (K_n^\dual \otimes F)$ for $n\gg 0$. It thus follows from \autoref{lem:quasi_smooth_baric_decomp}.

The semiorthogonal decompositions of $D^b\Coh_\S(\X)^{\geq w}$ and $D^b\Coh_\S(\X)^{<w}$ follow from the same in the general case of \autoref{thm:derived_Kirwan_surjectivity}. One only has to check that $i_\ast \pi^\ast : D^- \Coh(\Z)^{w} \to D^-\Coh(\X)$ preserves bounded coherent objects, which follows from \autoref{lem:finite_tor_amplitude}, and observe that $F \in D^-\Coh(\S)^w \simeq D^-\Coh(\Z)^w$ has bounded homology if and only if $i_\ast$ does. Also, one must check that every $F \in D^b\Coh_\S(\X)$ lies in the subcategory generated by finitely many of the semiorthogonal factors, which follows from \autoref{lem:bounded_SOD}.
\end{proof}

\section{Extensions to multiple strata, and local quotient stacks}
\label{sect:multiple_strata}

In this section we note that \autoref{thm:derived_Kirwan_surjectivity} and \autoref{thm:derived_Kirwan_surjectivity_quasi-smooth} extend to a more general setting. We will consider an open substack, $\X^{ss}$, of a local-quotient stack, $\X$, whose complement admits a $\Theta$-stratification. By this we mean a disjoint union of locally closed substacks covering $\X^{us} := \X \setminus \X^{ss}$, indexed by a set which is preordered by the real numbers (meaning we can assign a real number $\mu(\alpha)$ to every index $\alpha$). We assume that for every $c$ the subset $\bigcup_{\mu(\alpha)>c} |\S_\alpha| \subset |\X|$ is closed, that the closure of $|\S_\alpha|$ lies in $|\S_\alpha| \cup \bigcup_{\mu(\beta)>\mu(\alpha)} |\S_\beta|$.

In addition, we require that each $\S_\alpha$ is locally a derived $\Theta$-stratum in the following sense: We are given morphisms $\sigma_\alpha: \Z_\alpha \to \S_\alpha$ and $ i_\alpha : \S_\alpha \to \X$ along with open substacks $\U_p \subset \X$,  for each $p \in |\Z_\alpha|$, containing the image of $p$. We require that
\begin{itemize}
\item $\U_\alpha$ is a global quotient stack, and $i_\alpha : \S_\alpha \cap \U_p \to \U_p$ is identified with a derived $\Theta$-stratum in the sense of \autoref{defn:theta_stratum}.
\item For each point $p \in \Z_\alpha$ whose image lies in $\U_p \cap \U_q$, the map $(B\Gm)_{k'} \to \Z_\alpha$ coming from the identification of $\S_\alpha \cap \U_\alpha$ as a derived $\Theta$-stratum is isomorphic to the map coming from the identification of $\S_\alpha \cap \U_q \subset \U_q$ as a derived $\Theta$-stratum.
\end{itemize}
This additional structure is actually automatic from the perspective of identifying $\S_\alpha$ with an open substack of $\inner{\Map}(\Theta,\X)$, and that approach will be used in \cite{halpern2015derived} to avoid the need for such unwieldy local data.

\begin{rem}
In the case of a global quotient stack with multiple strata, one can fix a presentation for $\X$ as $\inner{\Spec}$ of a semifree equivariant sheaf of CDGA's, $\cO_X[U_\bullet;d]$, over a smooth quasi-projective $G$-scheme $X$ as in the previous sections. It is convenient to choose $X$ so that $\X \hookrightarrow \X'$ is a closed immersion, i.e. $\cO_X \to H_0(\cO_X[U_\bullet;d])$ is surjective. Then for each $\S_\alpha$, the open substack $\X' \setminus \bigcup_{\mu(\beta)>\mu(\alpha)} |\S_\beta|$ contains $|\S_\alpha|$, and we can restrict the algebra $\cO_X[U_\bullet;d]$ to this open substack to give a presentation of an open substack of $\X$ in which $\S_\alpha$ is a derived $\Theta$-stratum. These local quotient presentations tautologically meet the above compatibility criteria, so it is not necessary to choose different semi-free presentations of $\X$ in different local-quotient coordinate charts.
\end{rem}

\begin{defn} \label{defn:general_categories_local_quotient}
For any choice of integers $w_\alpha \in \bZ$ for each stratum, we define the categories
\begin{gather*}
D^-\Coh(\X)^{\geq w} := \left\{ F \in D^-\Coh(\X) \left| \forall \alpha, \forall p \in |\Z_\alpha|, F|_{\U_p} \in D^-\Coh(\U_p)^{\geq w_\alpha}  \right.\right\} \\
D^-\Coh(\X)^{< w} := \left\{ F \in D^-\Coh(\X) \left| \forall \alpha, \forall p \in |\Z_\alpha|, F|_{\U_p} \in D^-\Coh(\U_p)^{<w_\alpha} \right.\right\}
\end{gather*}
And in keeping with our previous conventions, we define
\begin{gather*}
D^-\Coh_{\X^{us}}(\X)^{\geq w} := D^-\Coh_{\X^{us}}(\X) \cap D^-\Coh(\X)^{\geq w} \\
D^-\Coh_{\X^{us}}(\X)^{< w} := D^-\Coh_{\X^{us}}(\X) \cap D^-\Coh(\X)^{< w} \\
\cG_w =  D^-\Coh(\X)^{< w} \cap D^-\Coh(\X)^{\geq w}
\end{gather*}
\end{defn}

\begin{thm} \label{thm:derived_Kirwan_surjectivity_full}
Let $\X$ be a local quotient stack, and let $\{\S_\alpha\}$ be a $\Theta$-stratification. Then for any choice $\{w_\alpha \in \bZ\}$ we have a semiorthogonal decomposition
\[
D^-\Coh(\X) = \sod{D^-\Coh_{\X^{us}}(\X)^{\geq w}, \cG_{w}, D^-\Coh_{\X^{us}}(\X)^{<w}},
\]
where $\cG_{w}$ is identified with $D^-\Coh(\X^{ss})$ via the restriction functor.
\end{thm}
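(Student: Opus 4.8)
The plan is to reduce the multi-stratum statement to the single-stratum result, \autoref{thm:derived_Kirwan_surjectivity}, by an induction on the strata, organized by the preorder $\mu$. Since $\X$ is locally a quotient stack and each $\S_\alpha$ is locally a derived $\Theta$-stratum, we first want to establish the single-stratum SOD \emph{locally} on $\X$ and then glue; the compatibility conditions on the charts $\U_p$ are exactly what is needed for this gluing. So the first step is to check that for a fixed $\alpha$, the subcategories $D^-\Coh(\X)^{\geq w_\alpha}$, $D^-\Coh(\X)^{<w_\alpha}$ (defined by the condition on all restrictions $F|_{\U_p}$) agree with the globally-defined versions from \autoref{defn:general_categories} applied to the single stratum $\S_\alpha$ sitting inside the open substack $\X \setminus \bigcup_{\mu(\beta)>\mu(\alpha)}|\S_\beta|$. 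This is where the compatibility of the maps $(B\Gm)_{k'}\to\Z_\alpha$ between charts is used: it guarantees the weight decomposition of $\sigma_\alpha^\ast i_\alpha^\ast F$ is independent of the chart, so the baric truncation functors $\radj{w_\alpha}$, $\ladj{w_\alpha}$ and the local-cohomology functors $\rgs{\geq w_\alpha}$, $\rgs{<w_\alpha}$ glue to global functors on $D^-\Coh$ of the relevant open substack.

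Next comes the induction. Order the strata so that $\mu$ is non-increasing (using that for each $c$ only finitely many — or at least a closed, well-behaved collection of — strata have $\mu>c$; in fact it suffices to work one stratum at a time since $\bigcup_{\mu(\alpha)>c}|\S_\alpha|$ is closed and the closure of each $|\S_\alpha|$ only picks up strata of strictly larger $\mu$). Let $\X_{>c} := \X \setminus \bigcup_{\mu(\alpha)>c}|\S_\beta|$ and note $\X^{us} = \bigsqcup_\alpha \S_\alpha$ is exhausted as $c \to -\infty$. For the top stratum $\S_{\alpha_0}$ (the closed one), \autoref{thm:derived_Kirwan_surjectivity} gives
\[
D^-\Coh(\X) = \sod{D^-\Coh_{\S_{\alpha_0}}(\X)^{<w_{\alpha_0}}, \cG_{w_{\alpha_0}}^{(0)}, D^-\Coh_{\S_{\alpha_0}}(\X)^{\geq w_{\alpha_0}}},
\]
where the middle term is identified via restriction with $D^-\Coh$ of the complement $\X \setminus \S_{\alpha_0}$, which is an open substack still carrying the $\Theta$-stratification given by the remaining strata. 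Applying the inductive hypothesis to this complement (its semistable locus is the same $\X^{ss}$, and its strata are the $\S_\alpha$ with $\alpha\neq\alpha_0$) produces a semiorthogonal decomposition of $D^-\Coh(\X\setminus\S_{\alpha_0})$ with middle term $D^-\Coh(\X^{ss})$ flanked by the supported-on-$\X^{us}\setminus\S_{\alpha_0}$ pieces. Splicing this into the middle slot $\cG^{(0)}_{w_{\alpha_0}}$ — legitimate because one can compose consecutive semiorthogonal decompositions, and the restriction functors are compatible — yields the claimed three-term decomposition of $D^-\Coh(\X)$, where the outer two terms assemble the contributions of all strata into $D^-\Coh_{\X^{us}}(\X)^{\geq w}$ and $D^-\Coh_{\X^{us}}(\X)^{<w}$. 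One has to check that these assembled categories are exactly the ones in \autoref{defn:general_categories_local_quotient}, i.e.\ that an object of $D^-\Coh_{\X^{us}}(\X)$ satisfying the global weight condition at every stratum is built from the stratum-by-stratum pieces; this follows since $\{\S_\alpha\}$ stratifies $\X^{us}$ and the baric conditions are local.

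The main obstacle I expect is the \emph{convergence/exhaustion} issue when the stratification is infinite: the induction produces an increasing union of semiorthogonal factors, and one must argue that $D^-\Coh(\X)$ is actually generated by them — equivalently, that every $F \in D^-\Coh(\X)$ is eventually "captured", i.e.\ for $c$ sufficiently negative (depending on $F$) the further strata contribute nothing new to $F$ in any fixed range of homological degrees. This is handled the same way as the completeness argument at the end of the proof of \autoref{thm:derived_Kirwan_surjectivity}: approximate $F$ by a perfect complex in low homological degree, use that the $\rgs{\geq w_\alpha}$ and $\rgs{<w_\alpha}$ preserve connectivity with a uniform shift coming from the Koszul systems, and note that the locally-closed stratification of the quasi-compact (on each chart) classical truncation involves only finitely many strata meeting any given chart. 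The remaining bookkeeping — that the gluing of local functors is well-defined, that consecutive SODs compose, and that the restriction identifications are natural — is routine and parallels \cite{halpern2012derived}.
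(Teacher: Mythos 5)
Your overall plan coincides with the paper's: reduce to \autoref{thm:derived_Kirwan_surjectivity}, use the compatibility of the maps $(B\Gm)_{k'} \to \Z_\alpha$ between charts to see that restriction preserves the relevant subcategories, and then proceed stratum by stratum. For a single stratum, the paper formalizes your ``glue the truncation functors'' step by descent: $D^-\Coh(\X)$ is the totalization of $D^-\Coh$ of the Cech nerve of the cover by $\X^{ss}$ and the charts $\U_p$, and the key categorical input is \autoref{lem:SOD_for_limit_categories}, which says that compatible semiorthogonal decompositions on a diagram of stable $\infty$-categories induce one on the limit. Your sketch amounts to proving that lemma by hand in this case, so this part is essentially the same argument, just less structured.

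The genuine gap is in the passage to (possibly infinitely many) strata. Your induction starts by peeling off ``the top stratum $\S_{\alpha_0}$ (the closed one)'', but the hypotheses only require that $\bigcup_{\mu(\alpha)>c}|\S_\alpha|$ be closed for each $c$: the index set may be infinite with $\mu$ unbounded above (the strata of larger $\mu$ are the more unstable, closed ones), in which case there is no maximal stratum and your induction never gets started; and even when $\mu$ is bounded the induction need not terminate. You also have the exhaustion reversed: the open substacks $\X_{\mu\leq c}=\X^{ss}\cup\bigcup_{\mu(\alpha)\leq c}\S_\alpha$ exhaust $\X$ as $c\to+\infty$, not $\X^{us}$ as $c\to-\infty$. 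Your proposed repair --- that only finitely many strata meet any quasi-compact chart, plus a ``capturing'' argument modeled on the completeness proof --- is not among the stated hypotheses and is not how the theorem is proved. The paper instead runs the stratum-by-stratum induction of \cite{halpern2012derived} on each open substack $\X_{\mu\leq c}$, where the newly added stratum is closed, and then recovers the decomposition of $D^-\Coh(\X)=\varprojlim_c D^-\Coh(\X_{\mu\leq c})$ by a second application of \autoref{lem:SOD_for_limit_categories}, using that the restriction functors respect the semiorthogonal pieces. This inverse-limit device is precisely what replaces the need for a maximal stratum or any local finiteness, and it is the ingredient missing from your proposal.
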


\begin{rem}
As a direct consequence of \autoref{thm:derived_Kirwan_surjectivity}, the categories $D^-\Coh_{\X^{us}}(\X)^{\geq w}$ and $D^-\Coh_{\X^{us}}(\X)^{< w}$ admit further semiorthogonal decompositions analogous to baric decompositions. We have omitted this for the sake of brevity.
\end{rem}

\begin{rem}
In the case where $\X$ is quasi-smooth, this theorem holds with $D^b\Coh$ instead of $D^-\Coh$, following \autoref{thm:derived_Kirwan_surjectivity_quasi-smooth}.
\end{rem}

The key observation is the following
\begin{lem} \label{lem:SOD_for_limit_categories}
Let $\cC_i$ be a diagram of stable $\infty$ categories indexed by a category $I$. Assume that each $\cC_i$ has a semiorthogonal decomposition $\cC_i = \sod{\cA_i,\cB_i}$ such that for any morphism $f:i \to j$ the functor $f_\ast : \cC_i \to \cC_j$ maps $\cA_i$ to $\cA_j$ and maps $\cB_i$ to $\cB_j$. Then $\cC := \varprojlim_{i\in I} \cC_i$ admits a semiorthogonal decomposition $\cC = \sod{\cA,\cB}$, where $\cA = \varprojlim \cA_i$ and $\cB = \varprojlim \cB_i$.
\end{lem}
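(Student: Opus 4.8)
The plan is to transport the semiorthogonal decompositions objectwise through the limit; the only genuine issue is that the comparison (truncation) functors of the SODs must be shown to descend to $\cC := \varprojlim_i \cC_i$, and that is forced by the hypothesis together with the uniqueness of SOD triangles. Recall that a semiorthogonal decomposition $\cC_i = \sod{\cA_i, \cB_i}$ amounts to full stable subcategories $\cA_i, \cB_i \subset \cC_i$ with $\mathrm{Map}_{\cC_i}(B, A) \simeq 0$ for $B \in \cB_i$, $A \in \cA_i$, together with, for each $X \in \cC_i$, an exact triangle $\beta^{\cB}_i X \to X \to \beta^{\cA}_i X$ whose outer terms lie in $\cB_i$, resp.\ $\cA_i$; such a triangle is unique up to a contractible space of choices, functorially in $X$, and $\beta^{\cB}_i$ (resp.\ $\beta^{\cA}_i$) is the right adjoint of $\cB_i \hookrightarrow \cC_i$ (resp.\ the left adjoint of $\cA_i \hookrightarrow \cC_i$). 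The first step is to note that the hypothesis -- each $f_\ast \colon \cC_i \to \cC_j$ carries $\cA_i$ into $\cA_j$ and $\cB_i$ into $\cB_j$ -- forces $f_\ast$ to commute with these truncations: applying the exact functor $f_\ast$ to the defining triangle of $X$ yields an exact triangle over $f_\ast X$ whose outer terms now lie in $\cB_j$, resp.\ $\cA_j$, hence by uniqueness it \emph{is} the defining triangle of $f_\ast X$. This produces natural equivalences $f_\ast \beta^{\cB}_i \simeq \beta^{\cB}_j f_\ast$ and $f_\ast \beta^{\cA}_i \simeq \beta^{\cA}_j f_\ast$, and the same uniqueness makes them compatible with composition in $I$; so the $\beta^{\cB}_i$ and $\beta^{\cA}_i$ assemble to endofunctors $\beta^{\cB}, \beta^{\cA}$ of $\cC$.

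Next, since each inclusion $\cA_i \hookrightarrow \cC_i$ is fully faithful and compatible with the transition functors, it induces a functor $\cA := \varprojlim_i \cA_i \to \varprojlim_i \cC_i = \cC$, which is fully faithful because mapping spaces in a limit of $\infty$-categories are the limits of the mapping spaces; the same holds for $\cB := \varprojlim_i \cB_i \hookrightarrow \cC$. A limit of stable $\infty$-categories along exact functors is stable and these inclusions are exact, so $\cA$ and $\cB$ are stable subcategories of $\cC$. An object of $\cA$ is precisely a compatible family $(A_i)$ with $A_i \in \cA_i$ (and likewise for $\cB$), so semiorthogonality is inherited termwise: for $A = (A_i) \in \cA$ and $B = (B_i) \in \cB$, $\mathrm{Map}_{\cC}(B, A) \simeq \varprojlim_i \mathrm{Map}_{\cC_i}(B_i, A_i) \simeq \varprojlim_i 0 \simeq 0$.

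Finally, for $X = (X_i) \in \cC$, the compatibilities from the first step show that $(\beta^{\cB}_i X_i)_i$ and $(\beta^{\cA}_i X_i)_i$ are compatible families, i.e.\ objects $\beta^{\cB} X \in \varprojlim_i \cB_i = \cB$ and $\beta^{\cA} X \in \varprojlim_i \cA_i = \cA$, and the termwise triangles assemble to an exact triangle $\beta^{\cB} X \to X \to \beta^{\cA} X$ in $\cC$, since a sequence in a limit of stable $\infty$-categories is a (co)fiber sequence exactly when it is one in each factor. Combined with the previous paragraph, this is exactly the semiorthogonal decomposition $\cC = \sod{\cA, \cB}$. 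I expect the main obstacle to be the coherence bookkeeping behind the first paragraph -- upgrading "$f_\ast$ commutes with $\beta^{\cB}$" from an objectwise statement to a coherent system over $I$ so that the truncations really define functors on $\cC$ -- and this is handled uniformly by the contractibility of the space of SOD-triangles over a fixed object; equivalently one observes that an SOD is the same datum as a colocalization together with its complementary localization, both objectwise notions, hence stable under the limit once the subcategories are preserved. Everything else is formal.
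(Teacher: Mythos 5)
Your proposal is correct and follows essentially the same route as the paper's proof: use the hypothesis plus uniqueness of the SOD triangle to show the transition functors commute with the projection (truncation) functors, assemble the objectwise triangles into a triangle in the limit, and deduce semiorthogonality from the fact that mapping spaces in $\varprojlim \cC_i$ are limits of the mapping spaces in the $\cC_i$. Your extra attention to the coherence of the commutation data over $I$ (via contractibility of the space of SOD-triangles, or the localization/colocalization reformulation) fills in a point the paper treats only implicitly, but it is the same argument.
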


\begin{proof}
The fact that $f_\ast$ preserves the categories $\cA_i$ and $\cB_i$ implies that $f_\ast$ commutes with the projections onto $\cA_i$ and $\cB_i$. Indeed if $F \in \cC_i$ and $B \to F \to A \to$ is the exact triangle canonically identifying $A$ and $B$ as the projection of $F$ onto $\cA$ and $\cB$ respectively, then $f_\ast B \to f_\ast F \to f_\ast A \to$ is an exact triangle canonically identifying $f_\ast A$ and $f_\ast B$ with the projection of $f_\ast F \in \cC_j$ onto $\cA_j$ and $\cB_j$ respectively.

This implies that given an element $\{F_i \in \cC_i\} \in \cC$, the projections $\{A_i \in \cA_i\}$ define an object of $\varprojlim_i \cA_i$, and we have an exact triangle $\{B_i\} \to \{F_i\} \to \{A_i\} \to$. What remains is to show that $\varprojlim \cA_i$ is semiorthogonal to $\varprojlim \cB_i$ in $\cC$, which follows immediately from the description $\RHom_\cC (\{F_i\},\{G_i\}) \simeq \varprojlim \RHom_{\cC_i}(F_i,G_i)$.
\end{proof}

\begin{rem}
This holds for arbitrary finite semiorthogonal decompositions, and thus for baric decompositions.
\end{rem}

\begin{proof}[Proof of \autoref{thm:derived_Kirwan_surjectivity_full}]
Observe that for a derived $\Theta$-stratum in a global quotient stack, $\S \subset \U$, the baric decomposition on $\Perf(\S)$ is determined by the weights of restrictions along the maps $(B\Gm)_{k'} \to \S$ defined by points of $|\Z|$, and one can use this to recover the baric decomposition on $\QC(\S)$. From there, the categories $D^-\Coh_\S(\U)^{\geq w}$, $D^-\Coh_\S(\U)^{\geq w}$, and $\cG_w$ are uniquely determined by the data of the closed immersion $i : \S \to \U$. It follows that if $\U' \subset \U$ is an open substack such that $\S \cap \U' \to \U'$ is still a derived $\Theta$-stratum, then the restriction functor $D^-\Coh(\U) \to D^-\Coh(\U')$ preserves the subcategories in the semiorthogonal decomposition of \autoref{eqn:main_SOD}, provided that the maps $(B\Gm)_{k'} \to \S \cap \U'$ coming from the identification of the latter as a derived $\Theta$-stratum are isomorphic to maps $(B\Gm)_{k'} \to \S$ determined by the identification of $\S$ as a derived $\Theta$-stratum.

Let us consider the case of a single stratum $\S \subset \X$. We have a Zariski-cover of $\X$ consisting of $\X^{ss}$ as well as the substack $\U_p$ around each point in the image of $\Z \to \X$. Faithfully flat descent implies that if we let $\X' := \X^{ss} \sqcup \bigsqcup_{p \in |\Z|} \U_p$, and let $\X'_\bullet$ be the Cech nerve of the projection $\X' \to \X$, then $D^-\Coh(\X) \simeq \op{Tot} D^-\Coh(\X'_\bullet)$. In any multiple intersection of open substacks in this cover, $\S$ intersected with this open subset can still be identified as a (possibly empty) $\Theta$-stratum, so at every level of the Cech nerve $D^-\Coh(\X'_n)$ has the semiorthogonal decomposition of \autoref{eqn:main_SOD}. Furthermore, the restriction functors in this co-simplicial diagram of $\infty$-categories are compatible with these semiorthogonal decompositions by the compatibility hypotheses between the idnetifications of $\Theta$-strata in different $\U_p$. Thus by \autoref{lem:SOD_for_limit_categories} the semiorthogonal decomposition extends to $\QC(\X)$ as well.

Finally, extending the result to multiple strata is similar. Writing $\X$ as an ascending union of the open substacks $\X_{\mu \leq c} := \X^{ss} \cup \bigcup_{\mu(\alpha)\leq c} \S_\alpha$, we have $D^-\Coh(\X) = \varprojlim D^-\Coh(\X_{\leq c})$. The inductive argument for how to extend the semiorthogonal decomposition stratum by stratum is identical to that in \cite{halpern2012derived}, and the restrictions $D^-\Coh(\X_{\leq c})$ are compatible with these semiorthogonal decompositions, hence by \autoref{lem:SOD_for_limit_categories} we get a semiorthogonal decomposition of $D^-\Coh(\X)$ as well.
\end{proof}

\section{The virtual non-abelian localization theorem}

In this section we discuss a localization formula for computing the $K$-theoretic index $\chi(\X,F):= \sum (-1)^p \dim H_p(R\Gamma(\X,F))$ for $F \in D^b\Coh(\X)$. The quantity $\chi(\X,F)$ is only well defined if $\sum_p \dim H_p R\Gamma(\X,F) < \infty$. Our main result is technically independent of \autoref{thm:derived_Kirwan_surjectivity} and \autoref{thm:derived_Kirwan_surjectivity_quasi-smooth}, but it is closely related conceptually, and requires the same idea of equipping a classical KN-stratum with a derived structure such that \autoref{lem:relative_cotangent_complex} holds.

We shall consider a quasi-smooth stack $\X$, which for simplicity we assume is a derived global quotient stack, which has several derived $\Theta$-strata in the sense of \autoref{defn:theta_stratum}, indexed by some partially ordered set, so
\[
\X = \X^{ss} \cup \bigcup \S_\alpha, \text{ and } \overline{\S_\alpha} \subset \bigcup_{\beta \geq \alpha} \S_\beta
\]
For convenience, we shall introduce the notation $\bL_\alpha^+ := (\bL_\X|_{\Z_\alpha})^{>0} \in \Perf(\Z_\alpha)^{>0}$ and $\bL_\alpha^- := (\bL_\X|_{\Z_\alpha})^{<0} \in \Perf(\Z_\alpha)^{<0}$.

\begin{thm} \label{thm:nonabelian_localization}
Let $\X$ be a quasi-smooth stack admitting a derived $\Theta$-stratification as above. Let
$$E_\alpha = \op{Sym}(\bL^-_\alpha \oplus (\bL^+_\alpha)^\dual) \otimes (\det (\bL^+_\alpha))^\dual [-\op{rank} \bL_\alpha^+].$$
Then for any $F \in \Perf(\X)$,
\begin{equation} \label{eqn:virtual_nonabelian_localization}
\chi(\X,F) \simeq \chi(\X^{ss},F|_{\X^{ss}}) + \sum_\alpha \chi(\Z_\alpha, F |_{\Z_\alpha} \otimes E_\alpha),
\end{equation}
and the left side is well-defined whenever all of the terms on the right side have $\sum \dim H_p R\Gamma(\bullet) < \infty$.
\end{thm}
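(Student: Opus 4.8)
The strategy is to reduce the localization formula to an Euler-characteristic computation on the semiorthogonal decomposition of \autoref{thm:derived_Kirwan_surjectivity} (or rather its quasi-smooth, multi-stratum refinement), so that $\chi$ becomes additive across the three pieces $D^-\Coh_{\X^{us}}(\X)^{<w}$, $\cG_w$, and $D^-\Coh_{\X^{us}}(\X)^{\geq w}$. First I would handle a single stratum $\S \subset \X$. Since $\chi$ is additive in exact triangles whenever all terms are finite, the decomposition $\rgs{<w}F \to F \to \rgs{\geq w}F$ (with the middle/complement pieces in $\cG_w$ restricting to $F|_{\X^{ss}}$) gives formally
\[
\chi(\X,F) = \chi(\X^{ss},F|_{\X^{ss}}) + \chi(\X,\rgs{<w}F) + \chi(\X, \rgs{\geq w}F),
\]
once we know the right-hand terms are defined. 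The key point is that for a \emph{perfect} $F$, only finitely many baric pieces of the Koszul-truncated objects $\radj{w}(K_n\otimes F)$ are nonzero in any given homological range, and by choosing the window width large (resp. small) we can arrange $\rgs{\geq w}F = 0$ and isolate $\rgs{<w}F$ as a finite iterated extension of objects $i_\ast\pi^\ast G_m$ with $G_m \in D^b\Coh(\Z)^m$; so the whole unstable contribution collapses to a sum of $\chi(\X, i_\ast \pi^\ast G_m)$.

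\textbf{Identifying the Euler class.} The heart of the argument is to compute $\chi(\X, i_\ast F)$ for $F \in D^b\Coh(\S)$ in terms of a fixed-point integral over $\Z$. Using $R\Gamma(\X, i_\ast F) \simeq R\Gamma(\S, F)$ and then $R\Gamma(\S,F) \simeq R\Gamma(\Z, \sigma^!$-type pushforward$)$, one needs the analog of ``integration along $\pi$''. Here \autoref{lem:finite_tor_amplitude} (that $\pi:\S\to\Z$ is quasi-smooth) and \autoref{lem:relative_cotangent_complex} (that $\bL_{\S/\X}|_\Z \simeq \bL_\alpha^+[1]$, and dually $\bL_{\Z/\S}$ built from $\bL_\alpha^\pm$) are exactly what is needed: pushing forward along the affine morphism $\S \to \Z$ followed by $\Z \hookrightarrow \X'$-type maps produces, after taking $\lambda$-invariants, a geometric series in the weights of $\bL_\alpha^+$ and $\bL_\alpha^-$. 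Concretely, $\pi_\ast \cO_\S \simeq \op{Sym}((\bL_\alpha^+)^\dual \ominus \bL_\alpha^-)$-style computation combined with Serre duality along $\pi$ (via \autoref{lem:relative_dualizing_complex}, giving the twist by $\det(\bL_\alpha^+)^\dual[-\rank]$) yields precisely the class $E_\alpha = \op{Sym}(\bL^-_\alpha \oplus (\bL^+_\alpha)^\dual) \otimes (\det \bL^+_\alpha)^\dual[-\rank\bL_\alpha^+]$. The ``infinitely many summands, finitely many contributing'' caveat is the statement that $\op{Sym}(\bL_\alpha^- \oplus (\bL_\alpha^+)^\dual)$ decomposes by $\lambda$-weight into infinitely many pieces in $D^b\Coh(\Z)$, but for any fixed perfect $F$ only finitely many contribute to $\chi$ because the weights of $F|_{\Z_\alpha}\otimes(\text{finite pieces})$ must match up with the finitely many nonzero baric components of $\rgs{<w}F$; one makes this rigorous by noting the partial sums stabilize exactly as in the proof that $\rgs{<w}$ preserves bounded objects (\autoref{lem:bounded_SOD}).

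\textbf{Multiple strata and well-definedness.} For the general $\Theta$-stratification, I would induct stratum by stratum using the filtration $\X_{\mu\le c}$ of \autoref{thm:derived_Kirwan_surjectivity_full}: each step peels off one stratum $\S_\alpha$ (closed in $\X_{\mu\le c}$), contributing $\chi(\Z_\alpha, F|_{\Z_\alpha}\otimes E_\alpha)$ by the single-stratum computation applied inside the open substack $\X_{\mu<c}$ where $\S_\alpha$ is a genuine derived $\Theta$-stratum, and the compatibility of $E_\alpha$ with restriction to opens (it is built from $\bL_\X|_{\Z_\alpha}$, which is intrinsic) ensures no choices matter. The final clause — that $\chi(\X,F)$ is well-defined as soon as every right-hand term is finite — follows because each term is literally one of the summands in the additive decomposition of $\chi$ along the semiorthogonal filtration, so finiteness of all of them forces finiteness of $R\Gamma(\X,F)$ and the identity \autoref{eqn:virtual_nonabelian_localization} then holds as an equality of integers.

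\textbf{The main obstacle.} The delicate step is the precise identification $\chi(\X, i_{\alpha\ast}\pi_\alpha^\ast G) = \chi(\Z_\alpha, G\otimes E_\alpha)$ — in particular getting the Euler class $E_\alpha$ exactly right, including the $\det(\bL_\alpha^+)^\dual[-\rank]$ twist, which is a Serre-duality-along-$\pi_\alpha$ correction that requires care precisely because $i_\alpha$ need not have finite Tor-dimension (so one cannot naively use $i_\alpha^!$) and $\pi_\alpha$ need not be a vector bundle when the obstruction space has nonzero $\lambda$-weight (cf. \autoref{ex:derived_stratum}). The derived $\Theta$-stratum structure, via \autoref{lem:relative_cotangent_complex} and \autoref{lem:relative_dualizing_complex}, is exactly the tool that makes this computation go through uniformly; assembling it correctly, and correctly tracking which weight-graded pieces of $E_\alpha$ survive $\chi$, is where the real work lies.
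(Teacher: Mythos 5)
There is a genuine gap at the heart of your argument: the identification of the unstable contribution with $\chi(\Z_\alpha, F|_{\Z_\alpha}\otimes E_\alpha)$ is asserted but not actually derived, and the mechanism you propose for it would not produce $E_\alpha$. Your reduction (via the semiorthogonal decomposition, or equivalently via the triangle $R\inner{\Gamma}_\S F \to F \to j_\ast(F|_{\X^{ss}})$, which the paper uses directly and which makes the appeal to \autoref{thm:derived_Kirwan_surjectivity} unnecessary) leaves you needing the K-theory class of the supported piece in terms of $i^\ast F$ and the normal data. You propose to get this from ``$\pi_\ast\cO_\S \simeq \op{Sym}((\bL^+_\alpha)^\dual\ominus\bL^-_\alpha)$'' plus ``Serre duality along $\pi$,'' but this conflates two different directions: the pushforward along $\pi:\S\to\Z$ only sees the stratum direction, i.e.\ its associated graded is $\op{Sym}(\bL_{\S/\Z})\simeq\op{Sym}(\bL^-_\alpha)$ (\autoref{lem:filter_pushforward}); it contains no $\bL^+_\alpha$ and no determinant shift. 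The factor $\op{Sym}((\bL^+_\alpha)^\dual)\otimes\det(\bL^+_\alpha)^\dual[-\rank\bL^+_\alpha]$ comes instead from the local cohomology of $\X$ along $\S$: one needs that $R\inner{\Gamma}_\S F$ carries a filtration with associated graded $i_\ast\bigl(\op{Sym}(\bL_{\S/\X}^\dual[1])\otimes i^!\cO_\X\otimes i^\ast F\bigr)$, which in the paper is \autoref{lem:deformation_to_normal_cone} (proved via derived deformation to the normal cone, $\rgs{}\cO_\X\simeq\colim_n\inner{\RHom}(\cO_{\S^{(n)}},\cO_\X)$), together with \autoref{lem:relative_dualizing_complex} applied to the closed immersion $i$ (not to $\pi$) and $\bL_{\S/\X}|_\Z\simeq\bL^+_\alpha[1]$ from \autoref{lem:relative_cotangent_complex}. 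Equivalently, in your framing: you never compute the baric pieces $G_m$ of $\rgs{\geq w}F$ or $\rgs{<w}F$ in terms of $F|_{\Z_\alpha}$; citing \autoref{lem:bounded_SOD} gives stabilization of the Koszul colimit but not this identification, and without it the formula cannot be reached.

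Your finiteness and multi-stratum remarks are fine in outline (the paper also handles finiteness by noting the weights of $\sigma^\ast F$ are bounded above while the weights of the symmetric powers go to $-\infty$, and does a stratum-by-stratum induction), and the well-definedness clause does follow once one has the filtration of $R\inner{\Gamma}_\S F$ whose graded pieces are exactly the right-hand terms. But as written the central computation is mis-attributed and missing, so the proof does not go through without adding the deformation-to-the-normal-cone input.
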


One of the key features of this formula is that the correction terms are a-priori coherent when $F \in \Perf(\X)$. Even though $E_\alpha$ does not have coherent homology sheaves, the component in each weight, $(E_\alpha)^w \in \Perf(\Z)^w$, is coherent and vanishes for all for $w \gg 0$. Thus only finitely many summands $(E_\alpha)^w$, those for which $(F|_{\Z_\alpha})^{-w}$ is nonvanishing, contribute to $\chi(\Z_\alpha,F|_{\Z_\alpha} \otimes E_\alpha)$.

\begin{lem} \label{lem:deformation_to_normal_cone}
Let $i:\S \subset \X$ be a closed immersion with perfect relative cotangent complex. Then for any $F \in \QC(\X)$, $R\inner{\Gamma}_\S F$ has a bounded below, increasing filtration whose associated graded is quivalent to
$$i_\ast \left( \op{Sym} (\bL_{\S/\X}^\dual [1]) \otimes i^! \cO_\X \otimes i^\ast F  \right).$$
\end{lem}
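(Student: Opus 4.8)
The plan is to reduce to the statement that the local cohomology functor $R\inner{\Gamma}_\S$ is computed by the ``deformation to the normal bundle'' construction, and then to identify the graded pieces using the fact that $i$ has perfect relative cotangent complex. First I would reduce to the case $F = \cO_\X$: since $R\inner{\Gamma}_\S$ is a $\QC(\X)$-linear endofunctor (it is smashing, being tensoring with the pro-object $\{K_n^\dual\}$, equivalently the fiber of $\cO_\X \to j_\ast j^\ast \cO_\X$ for the open complement $j : \X \setminus \S \hookrightarrow \X$), a filtration on $R\inner{\Gamma}_\S \cO_\X$ with the stated associated graded automatically induces one on $R\inner{\Gamma}_\S F \simeq R\inner{\Gamma}_\S\cO_\X \otimes_{\cO_\X} F$, with associated graded obtained by tensoring with $F$; and $i_\ast(\op{Sym}(\bL_{\S/\X}^\dual[1]) \otimes i^!\cO_\X) \otimes F \simeq i_\ast(\op{Sym}(\bL_{\S/\X}^\dual[1]) \otimes i^!\cO_\X \otimes i^\ast F)$ by the projection formula. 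So it suffices to construct the filtration on $R\inner{\Gamma}_\S \cO_\X$.

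For the absolute case, I would use the Koszul system $K_0 \to K_1 \to \cdots$ introduced just before \autoref{thm:derived_Kirwan_surjectivity} (via \cite[Lemma 3.22]{halpern2012derived}), with $\colim K_n \simeq R\inner{\Gamma}_\S \cO_\X$, each $K_n$ a perfect complex supported on $\S$ of uniformly bounded Tor-amplitude. Since $\cO_\S$ has perfect relative cotangent complex, one knows $\bL_{\S/\X}$ is perfect, and I want to say that $K_n \otimes \cO_\S$, for $n$ large relative to any fixed homological degree, stabilizes to the Koszul complex on the conormal complex: concretely, the composite $K_n \to \cO_\X \to \cO_\S$ exhibits (in a range growing with $n$) the ``derived conormal Koszul complex,'' whose homology/associated graded is $\op{Sym}^\bullet(\bL_{\S/\X}[1])$ — more precisely, the $m$-step filtration of $K_n \otimes \cO_\S$ by ``powers of the ideal'' has associated graded $\bigoplus_m \op{Sym}^m(\bL_{\S/\X}[1])$ (pushed forward along $i$). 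Combining with duality — namely $i^! F \simeq i^\ast F \otimes \bigl(i^! \cO_\X\bigr)$ as $\QC(\X)$-module functors, and $i^!\cO_\X \simeq \det(\bL_{\S/\X})^\dual[-\op{rank}\bL_{\S/\X}]$ when $i$ is a regular embedding (but here we only assume perfect relative cotangent complex, so $i^!\cO_\X$ is the relative dualizing complex, still perfect) — one matches $\op{Sym}(\bL_{\S/\X}^\dual[1]) \otimes i^!\cO_\X$ with the colimit of the dual Koszul pieces. The cleanest route is to dualize: $K_n \simeq (K_n^\dual)^\dual$ and $K_n^\dual$ is (in a growing range) the Koszul \emph{algebra} $\op{Sym}(\bL_{\S/\X}^\dual[1])$ pushed forward along $i$, tensored with $i^!\cO_\X$ to account for the shift between $i_\ast$ and the pushforward-with-supports; passing to the colimit over $n$ kills the truncations and yields the full (infinite, bounded-below, increasing) filtration with the asserted associated graded.

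Alternatively — and this is probably the conceptually cleanest argument — I would invoke \emph{deformation to the normal bundle}: there is a derived scheme over $\bA^1$ whose general fiber is $\X$ (with $\S$) and whose special fiber is the total space of $\bL_{\S/\X}^\dual[1]$ over $\S$ (i.e. $\Spec_\S \op{Sym}(\bL_{\S/\X}^\dual[1])$, the derived normal bundle), compatibly with the closed substack $\S$; applying $R\inner{\Gamma}_\S$ to $\cO$ on this deformation and specializing at $t=0$ produces the filtration, since $R\inner{\Gamma}_\S\cO_{\X_0}$ for the normal bundle $\X_0$ is visibly $i_\ast(\op{Sym}(\bL_{\S/\X}^\dual[1]) \otimes \text{(dualizing twist)})$ by the self-intersection computation for the zero section of a (shifted) vector bundle, and the family is flat enough that the associated graded of the ``order of vanishing'' filtration is constant. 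I would then chase the dualizing twist to see it is exactly $i^!\cO_\X$: for the zero section $z$ of a vector bundle $V \to \S$ one has $z^! \cO \simeq z^\ast \cO \otimes \det(V)^\dual[-\op{rank} V]$, and in our shifted setting $V \rightsquigarrow \bL_{\S/\X}^\dual[1]$ so $\det(V)^\dual[-\op{rank} V]$ becomes $\det(\bL_{\S/\X})^\dual[-\op{rank}\bL_{\S/\X}] \simeq i^!\cO_\X$ (by \autoref{lem:relative_dualizing_complex} / \autoref{lem:absolute_dualizing complex}, when $i$ is a regular embedding; in general one carries along $i^!\cO_\X$ as the relative dualizing object).

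\textbf{Main obstacle.} The delicate point is making the ``stabilization'' precise: one must show that the naive filtration of $K_n \otimes \cO_\S$ (or of the deformation) by ``powers of the ideal'' has associated graded $\op{Sym}^m(\bL_{\S/\X}^\dual[1])$ \emph{without} assuming $i$ is a regular embedding — i.e. when $\cO_\S$ is not perfect over $\cO_\X$ and $i^!$ is not a simple shift-and-twist of $i^\ast$ — and that the colimit over $n$ converges to genuinely give a bounded-below increasing filtration of $R\inner{\Gamma}_\S \cO_\X$ with locally finite graded pieces. Since $\bL_{\S/\X}$ lives in degrees $\geq 1$, each $\op{Sym}^m(\bL_{\S/\X}^\dual[1])$ is bounded \emph{above} with coherent homology, and the increasing union over $m$ is the right kind of exhaustive filtration; I expect the cleanest way to nail the convergence and the identification of graded pieces simultaneously is the deformation-to-the-normal-bundle picture, reducing everything to the self-intersection formula for the zero section of the shifted bundle $\bL_{\S/\X}^\dual[1]$ over $\S$, which is a direct computation with a Koszul-type resolution.
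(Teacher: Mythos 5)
Your reduction to $F = \cO_\X$ (via $R\inner{\Gamma}_\S F \simeq F \otimes R\inner{\Gamma}_\S\cO_\X$ and the projection formula) is exactly how the paper begins. But both of your routes leave unproven precisely the step the paper's proof is designed to supply, and your main route would in fact fail. The Koszul system $K_n$ is built from the classical KN stratum in the ambient smooth presentation $\inner{\Spec}_X(\op{Sym} U_0)/G$; the associated graded of its naive ``powers of the ideal'' filtration sees the conormal data of $\S' \subset \X'$, not $\bL_{\S/\X}$, and the claimed stabilization of $K_n^\dual$ to $i_\ast\bigl(\op{Sym}(\bL_{\S/\X}^\dual[1]) \otimes i^!\cO_\X\bigr)$ has no justification when $i$ is not a regular embedding --- which is the whole point of the lemma (e.g.\ \autoref{ex:derived_stratum} with $a>0$). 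Your alternative route is closer in spirit, but ``deformation to the normal bundle over $\bA^1$ plus the self-intersection formula for the zero section of the shifted bundle'' is not a computation you can do with a finite Koszul resolution here: $i^!\cO_\X$ is not invertible, and the constancy of the associated graded along the family is exactly the assertion to be proved, not an input. You flag this yourself as the ``main obstacle,'' which is an accurate self-assessment: the identification of the graded pieces is the content of the lemma, and it is missing.

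What the paper actually does is replace the $\bA^1$-degeneration by the Gaitsgory--Rozenblyum tower of derived square-zero extensions $\S = \S^{(0)} \hookrightarrow \S^{(1)} \hookrightarrow \cdots$, where $\S^{(n)} \hookrightarrow \S^{(n+1)}$ is a square-zero extension by $\op{Sym}^{n+1}(\bL_{\S/\X}[-1])$ and $\colim_n \S^{(n)} \simeq \widehat{\X}_\S$. The filtration is then tautological from the formula $R\inner{\Gamma}_\S\cO_\X \simeq \colim_n \inner{\RHom}_{\QC(\X)}(\cO_{\S^{(n)}},\cO_\X)$: the cofiber sequences $i_\ast \op{Sym}^{n+1}(\bL_{\S/\X}[-1]) \to \cO_{\S^{(n+1)}} \to \cO_{\S^{(n)}}$ together with adjunction and the dualizability of $\op{Sym}^n(\bL_{\S/\X}[-1])$ (here is where perfectness of $\bL_{\S/\X}$ enters) give the graded pieces $i_\ast\bigl(\op{Sym}^n(\bL_{\S/\X}^\dual[1]) \otimes i^!\cO_\X\bigr)$. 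Proving the colimit formula itself is the other nontrivial ingredient you do not address: it uses $\dbc(\X)_\S \simeq \colim_n \dbc(\S^{(n)})$, the categorical fact that $\id \simeq \colim_n i_n i_n^R$ for a colimit of presentable categories (\autoref{lem:categorical_nonsense}), the fiber sequence $i_\ast i^! \to \id \to j_\ast j^{\op{IndCoh},\ast}$ in $\icoh(\X)$, and a careful passage between $\icoh$ and $\QC$ on eventually coconnective objects to identify $i_\ast i^!\cO_\X$ with $R\inner{\Gamma}_\S\cO_\X$ and $(i_n)_\ast i_n^!\cO_\X$ with $\inner{\RHom}_{\QC(\X)}(\cO_{\S^{(n)}},\cO_\X)$. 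Without this (or an equally precise substitute), your proposal does not establish the lemma.
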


\begin{proof}
The functorial isomorphism $\rgs{} F \simeq F \otimes \rgs{} \cO_\X$, combined with the projection formula, shows that it is sufficient to prove the claim for $F = \cO_\X$. This is a consequence of the theory of derived deformation to the normal cone of \cite[Part IV.4, Section 10]{gaitsgory2015study}. They construct a sequence of derived square-zero extensions $\S = \S^{(0)} \hookrightarrow \S^{(1)} \hookrightarrow \cdots \hookrightarrow \X$ having the properties that
\begin{itemize}
\item $\S^{(n)} \hookrightarrow \S^{(n+1)}$ is a square-zero extension by (the pushforward of) $\op{Sym}^{n+1} (\bL_{\S/\X}[-1])$, and
\item $\colim \S^{(n)} \simeq \widehat{\X}_{\S}$ in the category of formal moduli problems under $\S$ \cite[Part IV.2]{gaitsgory2015study}.
\end{itemize}

The claim of the lemma will follow from the formula
\begin{equation} \label{eqn:cohomology_with_supports}
\rgs{} \cO_\X \simeq \colim_n \inner{\RHom}_{\QC(\X)} (\cO_{\S^{(n)}},\cO_\X)
\end{equation}
Indeed, exhibiting an object as the colimit of a diagram of the form $F_0 \to F_1 \to \cdots$ is the $\infty$-categorical generalization of having a (bounded below, increasing) filtration whose associated graded is $\bigoplus_n  \cofib(F_n \to F_{n+1})$. In our case we have a fiber sequence $i_\ast \op{Sym}^{n+1} (\bL_{\S/\X}[-1]) \to \cO_{\S^{(n+1)}} \to \cO_{\S^{(n)}}$, so the associated graded of the filtration is 
\[
\bigoplus_n \inner{\RHom}_{\QC(\X)}(i_\ast \op{Sym}^n (\bL_{\S/\X}[-1]),\cO_\X) \simeq i_\ast \bigoplus_n \inner{\RHom}_{\QC(\S)}(\op{Sym}^n(\bL_{\S/\X}[-1]),i^!\cO_\X).
\]
The claim follows from the dualizability of $\op{Sym}^n (\bL_{\S/\X}[-1])$.

In order to prove \autoref{eqn:cohomology_with_supports}, we use the identity $\colim \S^{(n)} \simeq \widehat{\X}_\S$, and hence $D^b\Coh(\X)_\S \simeq \colim_{n} D^b\Coh(\S^{(n)})$ by \cite[Part III.3, Corollary 1.1.6]{gaitsgory2015study}. It follows that we have a diagram
\[
\xymatrix{ \icoh(\S^{(n)}) \ar[drr]_{(i_n)_\ast} \ar[d] & & \\ \op{Ind}(\colim_n D^b\Coh(\S^{(n)})) \ar[rr]_{i_\ast} \ar[d]^{\simeq} & &  \icoh(\X) \\ \icoh(\X)_\S & &}
\]
Passing to right adjoints, \autoref{lem:categorical_nonsense} below implies that $i_\ast i^! \simeq \colim_n (i_n)_\ast i_n^!$, and we can consider the composition of this equivalence with $\Psi_\X : \icoh(\X) \to \QC(\X)$.

Now we have a fiber sequence of functors $i_\ast i^! \to \id_{\icoh(\X)} \to j_\ast j^{\op{IndCoh},\ast}$, where $j : U \to \X$ is the inclusion of the open complement of $\S$. Because the second two terms preserve the category $\icoh(\X)_{<\infty}$ and commute with $\Psi_\X$, it follows that $i_\ast i^!$ preserves eventually coconnective complexes, and under the equivalence $\Psi_\X : \icoh(\X)_{<\infty} \simeq \QC(\X)_{<\infty}$ we can identify $i_\ast i^! \cO_\X \simeq \rgs{} \cO_\X$. 

Furthermore, for each closed immersion $i_n : \S^{(n)} \to \X$, the functor $i_n^{\Qshriek}$ preserves eventually coconnective objects, so $i_n^{\Qshriek} : \QC(\X)_{<\infty} \to \QC(\S^{(n)})_{<\infty}$ is right adjoint to $(i_n)_\ast : \QC(\S^{(n)})_{<\infty} \to \QC(\X)_{<\infty}$. It follows that $i_n^!$ preserves eventually coconnective objects, and that $\Psi_\X(i_n^! F) \simeq i_n^\Qshriek F$ for $F \in \icoh(\X)_{<\infty}$. It follows that $\Psi_\X( (i_n)_\ast i_n^! \cO_\X) \simeq \inner{\RHom}_{\QC(\X)}(\cO_{\S^{(n)}},\cO_\X)$, and \autoref{eqn:cohomology_with_supports} follows.
\end{proof}

\begin{lem} \label{lem:categorical_nonsense}
Let $\Psi : I \to \op{Pr}^L$ be a diagram of presentable $\infty$-categories, and let $\cC = \colim_n \cC_n$ in $\op{Pr}^L$. If $i_n : \cC_n \to \cC$ is the functor defining $\cC$ as a colimit, then $\id_{\cC} \simeq \colim_{n \in I} i_n i_n^R$.
\end{lem}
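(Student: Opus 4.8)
The plan is to reduce the statement to the dual statement about colimits versus limits in presentable $\infty$-categories. Recall that $\op{Pr}^L$ and $\op{Pr}^R$ are equivalent via passing to right adjoints, and under this equivalence a colimit $\cC = \colim_{n\in I}\cC_n$ in $\op{Pr}^L$ is computed as the limit $\cC = \lim_{n\in I^{op}}\cC_n$ in $\op{Pr}^R$, with the colimit-insertion functors $i_n : \cC_n \to \cC$ corresponding to the projection functors $p_n : \cC \to \cC_n$; concretely $p_n = i_n^R$. Thus an object $F \in \cC$ is ``the same as'' a compatible system $\{p_n F = i_n^R F\}_{n}$ together with coherence data, and the unit maps $F \to i_n i_n^R F$ assemble into a cocone exhibiting $F$ as a candidate for $\colim_n i_n i_n^R F$.

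The key steps, in order, are as follows. First, I would fix $F \in \cC$ and observe that for each morphism $f : m \to n$ in $I$, the functor $f_* : \cC_m \to \cC_n$ satisfies $f_* i_m^R \simeq i_n^R$ — equivalently its right adjoint $f^* : \cC_n \to \cC_m$ satisfies $i_n \simeq i_m f^*$ — which is exactly the compatibility built into the colimit diagram in $\op{Pr}^L$. This makes $\{i_n i_n^R F\}$ into a genuine diagram $I \to \cC$ and supplies the canonical cocone to $F$. Second, to check this cocone is a colimit it suffices (since colimits in a presentable $\infty$-category are detected by mapping out) to show that for every $G \in \cC$ the natural map
\[
\Map_\cC(F, G) \longrightarrow \lim_{n\in I^{op}} \Map_\cC(i_n i_n^R F, G)
\]
is an equivalence. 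Third, I would rewrite the right-hand mapping space using the adjunction $i_n \dashv i_n^R$ as $\lim_n \Map_{\cC_n}(i_n^R F, i_n^R G)$, and then invoke the fact that $\cC = \lim_{I^{op}}\cC_n$ in $\op{Pr}^R$ (so that mapping spaces in $\cC$ are computed as the limit of mapping spaces in the $\cC_n$ along the right-adjoint transition functors): $\Map_\cC(F,G) \simeq \lim_n \Map_{\cC_n}(i_n^R F, i_n^R G)$. Comparing the two identifications gives the desired equivalence.

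The main subtlety — not so much an obstacle as a point requiring care — is the passage between the $\op{Pr}^L$ and $\op{Pr}^R$ descriptions and the verification that the transition functors in the mapping-space limit really are the right adjoints $f_*$ (matching the statement $i_n i_n^R$ rather than some mixed variance). This is handled by Lurie's adjoint-functor/straightening machinery for $\op{Pr}^L \simeq (\op{Pr}^R)^{op}$ (see \cite[Corollary 5.5.3.4]{lurie2009higher} and the discussion of limits and colimits of presentable categories there); once that identification is in place, the argument is the formal ``resolution by a limit diagram'' computation sketched above. An alternative, perhaps cleaner, route is to note that $\colim_n i_n i_n^R$ and $\id_\cC$ are both colimit-preserving endofunctors of $\cC$ and to exhibit a natural transformation between them (the counit) which one checks is an equivalence after composing with each $i_n$ on the right — using $i_n^R i_m \simeq \colim$ of the relevant transition functors — but the mapping-space argument above is the most direct.
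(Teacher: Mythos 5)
Your overall strategy is the same as the paper's: build the map $\colim_n i_n i_n^R \to \id_\cC$ from the counits, test it on mapping spaces, use the adjunction $i_n \dashv i_n^R$, and conclude from the identification of $\cC = \colim_I \Psi$ in $\op{Pr}^L$ with $\varprojlim_I \Psi^R$ along right adjoints (Lurie/Drinfeld--Gaitsgory). However, your first step asserts something false: for $f : m \to n$ in $I$ the compatibility built into the colimit cocone is $i_n \circ f_\ast \simeq i_m$, and passing to right adjoints gives $f_\ast^R \circ i_n^R \simeq i_m^R$ --- \emph{not} $f_\ast \circ i_m^R \simeq i_n^R$, and not $i_n \simeq i_m f^\ast$. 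Your claimed equivalence fails already for $I = \Delta^1$: there $\cC \simeq \cC_n$, $i_n = \id$, $i_m = f_\ast$, and your statement reduces to $f_\ast f_\ast^R \simeq \id_{\cC_n}$, which holds only when $f_\ast^R$ is fully faithful. So the justification you give for $\{i_n i_n^R F\}$ being an $I$-diagram with a cocone to $F$ is not available.

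The gap is local and repairable: the $I$-diagram structure on $\{i_n i_n^R\}$ comes not from an equivalence but from the canonical natural transformation $f_\ast i_m^R \to i_n^R$ adjoint (under $i_n \dashv i_n^R$) to the counit $i_m i_m^R = i_n f_\ast i_m^R \to \id_\cC$; equivalently, from $i_m i_m^R \simeq i_n f_\ast f_\ast^R i_n^R \to i_n i_n^R$ using the counit $f_\ast f_\ast^R \to \id$ and the correct compatibility $f_\ast^R i_n^R \simeq i_m^R$. With that in place, your mapping-space computation is exactly the paper's proof: $\Map_\cC(\colim_n i_n i_n^R F, G) \simeq \varprojlim_n \Map_{\cC_n}(i_n^R F, i_n^R G) \simeq \Map_\cC(F,G)$, the last equivalence being the $\op{Pr}^L$/$\op{Pr}^R$ identification you cite. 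Note that since the transition maps of the diagram now arise from counits rather than from your claimed equivalences, you still owe the (routine, but no longer automatic) check that under the adjunction the induced maps on mapping spaces agree with the functors $f_\ast^R$ appearing in the limit description of $\cC$; the paper elides this point as well.
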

\begin{proof}
The map $\colim_{n \in I} i_n i_n^R \to \id_\cC$ is induced by the counits of adjunction. For any objects $F,G \in \cC$, the induced map $\Map(F,G) \to \Map(\colim_{n\in I} i_n i_n^R (F),G)$ can be identified with the map $\Map(F,G) \to \varprojlim_{n\in I} \Map(i_n^R(F),i_n^R(G))$. Therefore, the claim follows from the fact that the map $i_n^R$ identify $\colim_I \Psi$ with the limit category $\varprojlim_I \Psi^R$ \cite[Proposition 1.7.5]{drinfeld2011compact}.
\end{proof}

\begin{lem} \label{lem:filter_pushforward}
Let $\S$ be a $\Theta$-stratum, with projection $\pi:\S \to \Z$ and section $\sigma : \Z \to \S$. Then for any $F \in \QC(\S)$, $\pi_\ast (F) \in \QC(\Z)$ has a bounded below, increasing filtration whose associated graded is $\op{Sym}(\bL_{\S/\Z}) \otimes \sigma^\ast F$.
\end{lem}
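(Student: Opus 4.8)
The plan is to reduce to the case $F = \cO_\S$ via the projection formula, and then read off the filtration from the semifree presentation of $\S$ over $\Z$.

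First I would observe, following the proof of \autoref{lem:baric_decomp}, that every $F \in \QC(\S)$ is the realization of a (possibly unbounded) complex $\pi^\ast E_\bullet$ with each $E_n$ a locally free sheaf on $\Z$: this uses that $\cO_\S \simeq \cA_\lambda$ is semifree over $\cO_\Z$, so that $\QC(\S)$ is generated under colimits by pullbacks of vector bundles (bar resolution over $\cO_\Z$ followed by resolving each $\cO_\Z$-module by vector bundles, exactly as in \autoref{lem:baric_decomp}). Since $\sigma^\ast\pi^\ast \simeq \id$ and $\pi_\ast(\pi^\ast E_n) \simeq (\pi_\ast\cO_\S)\otimes_{\cO_\Z}E_n$ by the projection formula, and $\pi_\ast$ is continuous (it has bounded cohomological amplitude), we get $\sigma^\ast F \simeq |E_\bullet|$ and $\pi_\ast F \simeq |(\pi_\ast\cO_\S)\otimes_{\cO_\Z}E_\bullet|$. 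So it suffices to produce a functorial filtration of $\cR := \pi_\ast\cO_\S$ whose associated graded is $\op{Sym}_{\cO_\Z}(\sigma^\ast\bL_{\S/\Z})$, compatible with tensoring by $E_\bullet$ degreewise and passing to the realization; the general statement then follows by tensoring.

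Next I would identify $\cR$ concretely. Using that the morphism $Y \to Z$ underlying $\pi$ is a vector bundle (classical Bialynicki--Birula/KN theory, as recalled in \cite{halpern2012derived}), together with the explicit semifree presentation $\cO_Y[(U_i|_Y)_{<1};d] \xrightarrow{\simeq} \cA_\lambda$, one checks that $\cR$ is, as a sheaf of connective $\cO_\Z$-CDGAs, freely generated by a graded locally free sheaf $W$ all of whose $\lambda$-weights are strictly negative (namely $W = (N_S^+)^\dual \oplus \bigoplus_i (U_i|_Z)_{<0}[i]$), the weight-$0$ part of $\cR$ being exactly $\cO_\Z = \cB_\lambda$; and that $\sigma^\ast\bL_{\S/\Z}$ is canonically the pair $(W,\bar d)$, where $\bar d$ is the linearization of the differential. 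This is the derived incarnation of the computation of the normal bundle of a classical KN stratum. (One must keep track of the fact that $\pi$ factors through $Z/P \to Z/L$, but in characteristic $0$ this contributes only bounded unipotent cohomology, so $\pi_\ast$ remains continuous and satisfies the projection formula.) The filtration is then the filtration of $\cR = \op{Sym}_{\cO_\Z}(W;d)$ by powers of the augmentation ideal $\bar\cR = \ker(\sigma^\# : \cR \to \cO_\Z)$, equivalently the polynomial-degree filtration — this is exactly the derived deformation to the normal cone of the section $\sigma : \Z \hookrightarrow \S$ in the sense of \cite{gaitsgory2015study}, and is the same mechanism underlying \autoref{lem:deformation_to_normal_cone}. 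Because $W$ lies in strictly negative weights while $\cO_\Z$ is concentrated in weight $0$, the differential has no weight-$0$ (constant) component, hence preserves this filtration and induces on $\bar\cR^n/\bar\cR^{n+1} \simeq \op{Sym}^n_{\cO_\Z}(W)$ the $n$-th symmetric power of $\bar d$; so the associated graded is $\bigoplus_{n\ge 0}\op{Sym}^n_{\cO_\Z}(\sigma^\ast\bL_{\S/\Z}) = \op{Sym}_{\cO_\Z}(\sigma^\ast\bL_{\S/\Z})$. Tensoring with $E_\bullet$ and realizing gives the stated filtration of $\pi_\ast F$ with graded $\op{Sym}(\sigma^\ast\bL_{\S/\Z})\otimes\sigma^\ast F$.

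I expect the main obstacle to be the bookkeeping in the third step: identifying $\pi_\ast\cO_\S$ with a semifree $\cO_\Z$-CDGA, checking that the generating sheaf $W$ genuinely carries only strictly negative weights — which is what makes the augmentation-ideal filtration differential-stable with leading term $\bar d$ — and making the identification $\sigma^\ast\bL_{\S/\Z} \simeq (W,\bar d)$ canonical and natural, together with the mild subtlety that $\pi$ is not literally affine but only affine up to bounded unipotent cohomology. Each of these is the derived analog of a statement already established for classical KN strata in \cite{halpern2012derived}, so no genuinely new difficulty should arise.
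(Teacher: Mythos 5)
Your overall strategy---reduce to $F=\cO_\S$ by resolving with pullbacks of vector bundles and the projection formula, then filter an explicit model of $\pi_\ast\cO_\S$ by polynomial degree in generators of strictly negative $\lambda$-weight, so that the differential respects the filtration and linearizes on the associated graded---is the same as the paper's. But there is a genuine gap in your third step: $\pi:\S\to\Z$ is \emph{not} affine, and your identification of $\cR=\pi_\ast\cO_\S$ (and hence of $\sigma^\ast\bL_{\S/\Z}$) is wrong in general. The morphism factors as the affine map $R\inner{\Spec}_Z\cA/P\to R\inner{\Spec}_Z\cB/P$ followed by the $U$-gerbe $R\inner{\Spec}_Z\cB/P\to\Z$, where $U=\ker(P\to L)$ is the unipotent radical. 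Pushforward along that gerbe is $U$-group cohomology, computed by the Chevalley--Eilenberg complex $\bullet\otimes_k\bigwedge^\ast\fu^\dual$, and this has higher cohomology even in characteristic $0$ (already $H^1(\bG_a,k)\neq 0$). So $\cR$ is not a connective CDGA freely generated by your $W$: it carries an extra $\bigwedge^\ast\fu^\dual$ factor. Correspondingly, $\sigma^\ast\bL_{\S/\Z}\simeq(\bL_\X|_\Z)^{<0}$ contains the summand $(\fg^\dual)^{<0}\simeq\fu^\dual$ in homological degree $-1$, which your $W$ omits; $\op{Sym}$ of this odd summand is precisely $\bigwedge^\ast\fu^\dual$, so the gerbe direction is an essential constituent of the asserted associated graded $\op{Sym}(\bL_{\S/\Z})\otimes\sigma^\ast F$, not merely a technicality affecting continuity and the projection formula, as your parenthetical remark suggests. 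As written, your argument is only valid when $P=L$ (e.g.\ a torus, or $\lambda$ central), in which case it does coincide with the paper's.

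The repair is exactly what the paper does: present $F$ as $\cA\otimes_{\cO_Z}M$ with twisted differential, note that the pushforward along the affine part forgets the $\cA$-module structure while the pushforward along the $U$-gerbe is $\bullet\otimes_k\bigwedge^\ast\fu^\dual$, so that $\pi_\ast F$ is modeled by $\bigwedge^\ast\fu^\dual\otimes_k\cA\otimes M$, and then filter by tensor order counting \emph{both} the semifree generators of $\cA$ over $\cO_Z$ \emph{and} the generators of $\fu^\dual$. Since all of these have strictly negative $\lambda$-weights (the observation you already use), the differential is compatible with this filtration and its associated graded carries the linearized differential, yielding $\op{Sym}(\bL_{\S/\Z})\otimes\sigma^\ast F$ with the $\fu^\dual[-1]$ piece included. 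With that correction, the remaining parts of your outline (the reduction to module presentations and the weight bookkeeping) go through as in the paper.
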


\begin{proof}
Let us recall the definition $\Z = R\Spec_Z \cB / L$. Then we can factor the morphism $\pi$ as an affine morphism $f : R\Spec_Z \cA / P \to R\Spec_Z \cB / P$,\footnote{Note that we can regard $\cA$ as a quasicoherent, and in fact locally semi-free, sheaf of CDGA's over $Z$ because the projection $Y \to Z$ is affine, and in fact an \'{e}tale-locally trivial bundle of affine spaces.} where $P$ acts through the projection $P \to L$, followed by the morphism $g : R\Spec_Z \cB / P \to \Z$.

Now any module has a presentation of the form $\cA \otimes_{\cO_Z} M$ with some differential determined by $d : M \to \cA \otimes M$. The pushforward $f_\ast$ is simply the functor which forgets the $\cA$-module structure (but remembers the action of $P$), and the pushforward $g_\ast$ have an explicit description as well. Because $R\Spec_Z \cB / P \to \Z$ is a $U$-gerbe, where $U$ is the unipotent group which is the kernel of $P \to L$, we can compute the pushforward using the lie-algebra cohomology. For any $E \in \QC(R\Spec_Z \cB / P)$ we have $g_\ast (E) \simeq E \otimes_k \bigwedge^\ast (\fu^\dual)$ regarded as an $L$-equivariant $\cB$-module, where the differential encodes the action of $U$.

Combining these two observations we have an explicit complex $\bigwedge^\ast (\fu^\dual) \otimes_k \cA \otimes M$ computing $\pi_\ast (F)$. Assume that we have replaced $\cA$ with a semi-free presentation over $\cO_\Z$. Then we can filter this complex by tensor order, considering both the semi-free generators of $\cA$ and the generators in $\fu^\dual$. One can check that the associated graded has an underlying $\cO_\Z$ module which is still isomorphic to $\bigwedge^\ast (\fu^\dual) \otimes_k \cA \otimes M$, but with a new differential such that the complex is equivalent to $\sigma^\ast(\op{Sym} (\bL_{\S/\Z}) \otimes F)$.
\end{proof}

\begin{proof} [Proof of \autoref{thm:nonabelian_localization}]
Given the previous lemmas, the proof is a straightforward application of cohomology with supports. We describe this explicitly for a single stratum, and the general case follows by a simple inductive argument. For any $F$ there is an exact triangle $R\inner{\Gamma}_\S(F) \to F \to j_\ast(F|_{\X^{ss}}) \to$, where $j : \X^{ss} \to \X$ is the inclusion. If $R\Gamma$ is finite dimensional for the outer two, then it is finite dimensional for $F$ as well, and applying $\chi$ we have $\chi(\X,F) = \chi(\X^{ss},F|_{\X^{ss}}) + \chi(\X,R\inner{\Gamma}_\S(F))$.

Now \autoref{lem:deformation_to_normal_cone} implies that $R\Gamma_\S(\X,F)$ has a bounded below, increasing filtration whose associated graded is $R\Gamma(\S,\bigoplus_n \op{Sym}^n (\bL_{\S/\X}^\dual [1]) \otimes i^!\cO_\X \otimes i^\ast F)$. Note that by \autoref{lem:relative_cotangent_complex} and \autoref{lem:relative_dualizing_complex}, we can rewrite this as
\[
\bigoplus_n R\Gamma \left( \S,\op{Sym}^n (\ladj{0} (i^\ast \bL_{\X}^\dual)) \otimes \det \left( \ladj{0} (i^\ast \bL_{\X}^\dual) \right)[\rank (\bL^+)^\dual] \otimes i^\ast F \right)
\]
Because the weights of $i^\ast F$ are bounded above, and the weights of $\op{Sym}^n (\ladj{0} (i^\ast \bL_{\X}^\dual))$ become increasingly negative as $n$ increases, only finitely many terms of this sum are nonzero.

We may identify the restriction of $\ladj{0}(i^\ast \bL_\X^\dual)$ to $\Z$ with $(\bL^+)^\dual$, and in the proof of \autoref{lem:finite_tor_amplitude} we computed that $\bL_{\pi : \S \to \Z} \simeq \bL_{\Z/\S}[-1] \simeq (\sigma^\ast \bL_\S)^{<0}$, so combining this with \autoref{lem:relative_cotangent_complex} we see that $\bL_{\S/\Z} \simeq \bL^-$. Applying \autoref{lem:filter_pushforward}, we see that each summand in the expression above has a bounded below increasing filtration whose associated graded is
\[
\bigoplus_m R\Gamma \left(\Z,\op{Sym}^n ( (\bL^+)^\dual) \otimes \op{Sym}^m (\bL^-) \otimes \det((\bL^+)^\dual)[\rank(\bL^+)^\dual] \otimes \sigma^\ast F \right)
\]
Again, because the weights of $\sigma^\ast F$ are bounded above, only finitely many terms of this sum are nonzero. It follows that only finitely many terms of the sum $R\Gamma(\Z,E_\alpha \otimes \sigma^\ast F)$ are non-zero, and if they are finite dimensional, then so is $R\Gamma_\S(\X,F)$, and $\chi(\X,R\Gamma_\S(\X,F)) = \chi(\Z,E_\alpha \otimes \sigma^\ast F)$.

\end{proof}

\bibliography{references_appendix}{}
\bibliographystyle{plain}

\end{document}